\crefname{subsection}{Subsection}{Subsections}
\crefname{subsubsection}{Subsubsection}{Subsubsections}
\theoremstyle{definition}
\newtheorem{theorem}{Theorem}[subsection]
\newtheorem{ex}[theorem]{Example}
\newtheorem{lemma}[theorem]{Lemma}
\newtheorem{prop}[theorem]{Proposition}
\newtheorem{rmk}[theorem]{Remark}
\newtheorem*{rmk*}{Remark}
\newtheorem*{ex*}{Example}
\newtheorem*{theorem*}{Theorem}
\newtheorem*{defn*}{Definition}
\newcommand{\bbZ}{\mathbb{Z}}
\newcommand{\bbF}{\mathbb{F}}
\newcommand{\bbR}{\mathbb{R}}
\newcommand{\bbC}{\mathbb{C}}
\newcommand{\Hom}{\operatorname{Hom}}
\newcommand{\Fib}{\operatorname{Fib}}
\newcommand{\im}{\operatorname{Im}}
\renewcommand{\ker}{\operatorname{Ker}}
\newcommand{\Sq}{\mathrm{Sq}}
\newcommand{\tr}{\operatorname{tr}}
\newcommand{\res}{\operatorname{res}}
\newcommand{\Spin}{\mathrm{Spin}}
\newcommand{\h}{\mathrm{h}}
\newcommand\xqed[1]{%
  \leavevmode\unskip\penalty9999 \hbox{}\nobreak\hfill
  \quad\hbox{#1}}
\newcommand\tqed{\xqed{$\triangleleft$}}
\DeclareRobustCommand{\tvdots}{%
  \vbox{\baselineskip4\p@\lineskiplimit\z@\kern0\p@\hbox{.}\hbox{.}\hbox{.}}}
\begin{document}

\title[$K$-theory equivariant with respect to an elementary abelian $2$-group]{$K$-theory equivariant with respect \\ to an elementary abelian $2$-group}

\author[William Balderrama]{William Balderrama}

\subjclass[2020]{
Primary
55N15; 
Secondary
19L47, 
55S25, 
55N91. 
}

\begin{abstract}
We compute the $RO(A)$-graded coefficients of $A$-equivariant complex and real topological $K$-theory for $A$ a finite elementary abelian $2$-group, together with all products, transfers, restrictions, power operations, and Adams operations.
\end{abstract}

\maketitle

\section{Introduction}

Fix a finite elementary abelian $2$-group $A$, i.e.\ $A\cong (\bbZ/2)^n$ for some $n$. The purpose of this paper is to provide a reference for the structure of $A$-equivariant complex and real topological $K$-theory. Geometrically, this gives information about stable classes of $A$-equivariant vector bundles over $A$-representation spheres. Homotopically, this gives information about $A$-equivariant stable homotopy theory at chromatic height $1$.

We are not the first to study this. In particular, the additive structure of $\pi_\star KU_A$ is known: Karoubi \cite{karoubi2002equivariant} has described the groups $\pi_\star KU_G$ for any finite group $G$, and the particular case $G=(\bbZ/2)^n$ was revisited by Hu--Kriz \cite{hukriz2006rog}. Moreover, the coefficient ring of the connective spectrum $(ko_{C_2})_2^\wedge$ has been computed by Guillou--Hill--Isaksen--Ravenel \cite{guillouhillisaksenravenel2020cohomology}, and $(KO_{C_2})_2^\wedge$ was studied in \cite{balderrama2021borel}. We are then interested in the descent to $KO_A$ for general $A$ and the wealth of additional structure present, including products, transfers, restrictions, power operations, and Adams operations.

Though our computation gives ostensibly geometric information about vector bundles, our motivation is homotopical. Classically, $KO$ has an infinite Hurewicz image, and its Bott periodicity reflects $v_1$-periodicity in stable homotopy theory. This refines to equivariant Bott periodicity for $\Spin$ representations, which give a rich web of periodicities in $\pi_\star KO_A$, and these suggest a similarly rich web of periodicities in $A$-equivariant stable homotopy theory. Sharper information may be obtained by considering the $A$-equivariant $J$-spectrum $J_A = \Fib\left(\psi^3 - 1 \colon (KO_A)_{(2)}\rightarrow (KO_A)_{(2)}\right)$. Our computation gives information necessary to understand $J_A$, although we shall not pursue this further. When $A = C_2$, we note that the additive structure of $\pi_\star (J_{C_2})_2^\wedge$ may be recovered from work of Adams on the $K$-theory of projective spaces \cite{adams1962vector}, and has been studied in the guise of the $C_2$-equivariant $J$-homomorphism by several people \cite{loffler1977equivariant,crabb1980z2,minami1983equivariant}, with an analysis of the ring structure appearing in \cite{balderrama2021borel}.

We were led to this computation by a different path. Recently, Gepner--Meier \cite{gepnermeier2020equivariant} have produced a fully integral theory of equivariant elliptic cohomology for abelian compact Lie groups, building on work of Lurie \cite{lurie2009survey,lurie2018ellipticii}; this produces good analogues of equivariant $K$-theory at chromatic height $2$. We were initially led to study $A$-equivariant $K$-theory as we were investigating equivariant elliptic cohomology and found that even the height $1$ computations we wished to consult did not exist. From this perspective, $\pi_\star KU_A$ and $\pi_\star KO_A$ give the $A$-equivariant analogues of those height $1$ patterns which are found across chromatic computations at $p=2$. A good understanding of these patterns is necessary for work at higher heights, and this motivated the present work.

We summarize the structure of $\pi_\star KU_A$ in \cref{ssec:kusummary}, and of $\pi_\star KO_A$ in \cref{ssec:kosummary}.

\subsection{Conventions}\label{ssec:preliminaries}

We maintain the following conventions throughout the paper.

(1)~~ We write $A^\vee$ for the dual space of $A$. Thus $A^\vee\cong \Hom(A,C_2)\cong \Hom(A,U(1))$, with corresponding isomorphisms $\bbZ[A^\vee]\cong RO(A) \cong RU(A)$. We write the group structure on $A^\vee$ multiplicatively, and refer to its elements as functionals. Given $K,L\subset A^\vee$, we write $K+L\subset A^\vee$ for the smallest subgroup containing $K\cup L$.

(2)~~ Throughout the paper, the symbols $\lambda,\mu,\kappa,\delta$ are understood to range through linearly independent functionals on $A$. Thus for instance ``$\bbZ[x_{\lambda,\mu}]$'' would be shorthand for ``$\bbZ[x_{\lambda,\mu} : \lambda,\mu\in A^\vee\text{ linearly independent}]$''.

(3)~~ Similarly, the symbol $H$ ranges through the rank $2$ subgroups of $A^\vee$, and the symbol $E$ ranges through the rank $3$ subgroups of $A^\vee$.

(4)~~ Given functionals $\lambda_1,\ldots,\lambda_n\in A^\vee$, we shall write $\langle \lambda_1,\ldots,\lambda_n\rangle\subset A^\vee$ for the subgroup generated by $\lambda_1,\ldots,\lambda_n$.

(5)~~ Finally, we recall the following. Given a codimension $1$ subgroup $j\colon \ker(\lambda)\subset A$, there is an $A$-equivariant cofiber sequence
\begin{center}\begin{tikzcd}
A/\ker(\lambda)_+\ar[r]&S^0\ar[r,"\rho_\lambda"]&S^\lambda
\end{tikzcd},\end{center}
where the first map sends $A/\ker(\lambda)$ to the non-basepoint of $S^0$ and the second map is the inclusion of poles. Smashing this with an $A$-equivariant spectrum $R_A$ gives rise to the long exact sequence
\begin{center}\begin{tikzcd}
\cdots\ar[r]&\pi_{j^\ast(\star)}R_{\ker(\lambda)}\ar[r,"j_!"]&\pi_\star R_A\ar[r,"\rho_\lambda"]&\pi_{\star-\lambda}R_A\ar[r,"j^\ast"]&\pi_{j^\ast(\star-\lambda)}R_{\ker(\lambda)}\ar[r]&\cdots\end{tikzcd}\end{center}
with $j^\ast$ the restriction and $j_!$ the transfer, and we shall freely make use of the resulting equalities
\begin{gather*}
\ker(j^\ast\colon \pi_\star R_A\rightarrow\pi_{j^\ast\star}R_{\ker(\lambda)}) = \im(\rho_\lambda\colon \pi_{\star+\lambda}R_A\rightarrow\pi_\star R_A), \\
\ker(\rho_\lambda\colon \pi_{\star+\lambda}R_A\rightarrow\pi_\star R_A) = \im(j_!\colon \pi_{j^\ast(\star+\lambda)}R_{\ker(\lambda)}\rightarrow \pi_{\star+\lambda} R_A).
\end{gather*}

\section{Complex \texorpdfstring{$K$}{K}-theory}

\subsection{Summary} \label{ssec:kusummary}

For ease of reference, we gather the result of our computation in one place.

\begin{theorem}\label{thm:kua}
The coefficients of $KU_A$ behave as described in this subsection.
\tqed
\end{theorem}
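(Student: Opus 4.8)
As stated this is only a summary of the computation that fills the rest of the subsection, so I describe the strategy I would use to establish everything it catalogues. The plan is to induct on the rank $n$ of $A$. The base case $n=0$ is the non-equivariant $\pi_\star KU=\bbZ[\beta^{\pm1}]$, though for bookkeeping it is more convenient to take $n=1$, $A=C_2$, as the ground case, matching \cite{karoubi2002equivariant} and \cite{hukriz2006rog}. For the inductive step I would fix a functional $\lambda$, write $j\colon\ker(\lambda)\hookrightarrow A$ for the codimension-$1$ subgroup, and run everything through the fundamental cofiber sequence of Convention (5). The two displayed identities exhibit $\pi_\star KU_A$ as an extension whose subobject is $\im(\rho_\lambda)=\ker(j^\ast)$ and whose quotient injects via $j^\ast$ into the inductively known $\pi_{j^\ast\star}KU_{\ker(\lambda)}$; dually, $\ker(\rho_\lambda)=\im(j_!)$ is governed by transfers from $\ker(\lambda)$. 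Thus, once one understands multiplication by the Euler class $a_\lambda=\rho_\lambda$ and the transfer $j_!$, the rank-$(n-1)$ answer determines the rank-$n$ answer, and the final task is to reconcile the information coming from all $\lambda$ simultaneously.

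The structural input that makes this tractable is that $KU_A$ is complex orientable and even: every complex $A$-representation is $KU$-orientable, so its Bott class is a unit. In particular the complexification $\bbC_\lambda=\bbR_\lambda\otimes_{\bbR}\bbC$ of each sign representation carries an invertible Bott class $\beta_\lambda\in\pi_{2\lambda}KU_A$, so the classes $\beta$ and $\beta_\lambda$ generate a lattice of periodicities and reduce the $RO(A)$-graded answer to finitely many graded pieces. Consequently I expect $\pi_\star KU_A$ to be generated over $\pi_0KU_A=R(A)=\bbZ[A^\vee]/(\lambda^2-1)$ by these Bott units together with the Euler classes $a_\lambda$, subject to a controlled list of relations. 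The two basic relations to pin down first are $a_\lambda^2=\beta_\lambda^{-1}(1-\lambda)$ up to normalization, coming from the identification $S^{\bbC_\lambda}=S^\lambda\wedge S^\lambda$ of $a_\lambda^2$ with the complex Euler class $1-\lambda$, and, on the transfer/restriction side, $j^\ast a_\lambda=0$ together with the projection formula $j_!(j^\ast y\cdot x)=y\cdot j_!(x)$ and the transfer of the unit $j_!(1)=1+\lambda$.

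The heart of the computation, and the step I expect to be the main obstacle, is the multiplicative structure: the relations among products $a_\lambda a_\mu$ of Euler classes for distinct functionals, and among these and the transferred classes. I would first compute $\pi_\star KU_A$ by hand in ranks $1$ and $2$ directly from the cofiber sequences and the explicit rings $R(C_2)$ and $R((\bbZ/2)^2)$, read off the basic relations there, and then verify that they propagate. The organization of the catalogued data by the rank-$2$ and rank-$3$ subgroups $H,E\subset A^\vee$ appearing in the conventions strongly suggests that no essentially new phenomenon arises beyond three independent functionals, so that the general answer is assembled from rank-$\leq 3$ data glued along restrictions. Making this precise requires showing the low-rank relations are complete: that the long exact sequences carry no hidden additive extensions, and that no exotic multiplicative relation among Euler classes and transfers first appears at higher rank. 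Carrying this bookkeeping out uniformly in $n$ and compatibly across the various $\ker(\lambda)$ is where the real work lies.

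Finally, the named operations are grafted onto the ring. Restrictions $j^\ast$ and transfers $j_!$ are read off the cofiber sequence and the projection and double-coset formulas already used in the induction. The Adams operations act on $R(A)$ by $\psi^k(\lambda)=\lambda^k$, multiply Bott classes by the Bott cannibalistic classes $\psi^k(\beta_\lambda)=(1+\lambda+\cdots+\lambda^{k-1})\beta_\lambda$ (recovering $\psi^k(\beta)=k\beta$ for $\lambda$ trivial), and are then pinned on the Euler classes by multiplicativity together with $a_\lambda^2=\beta_\lambda^{-1}(1-\lambda)$, the parity of $k$ entering because $\lambda^2=1$; this is exactly the input needed later for $J_A$. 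Power operations are the most delicate: I would use the $E_\infty$-structure and Atiyah's identification of $K$-theoretic power operations with symmetric- and exterior-power operations on representations to compute the total power operation on the generators $\lambda$ and $a_\lambda$, then propagate it through the ring by its multiplicative (Cartan-type) behavior, the one genuine subtlety being its value on transferred classes, which must be extracted from the double-coset behavior of the power operation.
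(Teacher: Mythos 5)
There is a genuine gap: your proposed presentation omits the classes $k_H$, and the claim that $\pi_\star KU_A$ is generated over $\pi_0 KU_A$ by Bott units and Euler classes is already false for $A=(\bbZ/2)^2$. In the degree $\xi = 4-\sum_{\lambda\in H}\lambda$ with $H=A^\vee$ of rank $2$, the group $\pi_\xi KU_A$ is infinite cyclic, but every monomial in Bott classes and Euler classes landing in that degree is a unit multiple of $\rho_\lambda\rho_\mu\rho_{\lambda\mu}$, which vanishes (relation \ref{r4}). The actual generator is an indivisible class $k_H$ characterized by $2k_H=\tr(1)$ for the transfer from the trivial subgroup, equivalently $k_H=j_!(\tau_{j^\ast\mu}^2)$ for a codimension-$1$ inclusion $j$ --- a ``half-transfer'' in a nonzero $RO$-degree that is not visible from the unit transfer $j_!(1)=1+\sigma_\lambda$ you record. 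These classes, indexed by the rank-$2$ subgroups $H\subset A^\vee$, are the essential new multiplicative generators of the whole computation: they carry their own relations ($\rho_\lambda k_H=0$ for $\lambda\in H$, the square and exchange relations \ref{r7} and \ref{r6}, and the identity \ref{r9} expressing $\rho_\lambda\rho_\mu\rho_\kappa$ as $\rho_{\lambda\mu\kappa}$ times a Bott class times a $k$), they appear in every basic monomial, and they are what force the transfer, Weyl action, squaring operation, and Adams operations to be computed on a class not in your proposed generating set. Without identifying them, neither the additive basis nor the relation ideal can even be stated.

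Separately, your organizing induction differs from the paper's and leaves its hardest step open. The paper does not induct over codimension-$1$ subgroups; it imports the Hu--Kriz decomposition (\cref{lem:hukriz}), writing every $\xi\in RO(A)$ as $\epsilon+S+V$ with $S$ orientable and $V$ a sum of independent functionals plus regular representations of rank-$2$ subgroups in general position, and then applies the K\"unneth isomorphism $\pi_{\star}KU_A\otimes_{\pi_\ast KU}\pi_{\star'}KU_{A'}\cong \pi_{\star+\star'}KU_{A\oplus A'}$ to reduce each degree to an external product of rank-$1$ and rank-$2$ computations. That is the mechanism that turns your heuristic ``no new phenomenon beyond three independent functionals'' into a theorem; the uniqueness of basic generators (\cref{lem:unique}) and the completeness of the relations then require a separate combinatorial argument (\cref{lem:link} and the rewriting of monomials). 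Your cofiber-sequence induction, by contrast, requires identifying $\im(\rho_\lambda)$ as a module and reconciling the answers over all $\lambda$ simultaneously, which is exactly the part you defer as ``where the real work lies''; as written the proposal does not supply it.
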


The proof is spread throughout the rest of this section, glued together as described below. 

\subsubsection{Generators}\label{sssec:generators}

We begin by describing a set of multiplicative generators for $\pi_\star KU_A$. 

There are three basic types of invertible elements in $\pi_\star KU_A$ arising from equivariant Bott periodicity. Following Atiyah \cite{atiyah1968bott}, for every orthogonal $A$-representation $V$ admitting a $\Spin^c$ structure, there is an invertible Bott class $b_V\in \pi_V KU_A$; to be precise, we shall take the Bott class denoted there by $\lambda_V$. In particular, let $\beta = b_2 \in \pi_2 KU_A$ be the standard Bott class, and define the following Thom classes. First, for every nontrivial functional $\lambda\in A^\vee$, the orthogonal representation $2\lambda = \lambda\otimes\bbC$ admits a complex structure, and we set $\tau_\lambda^2 = \beta \cdot b_{2\lambda}^{-1}$. Next, for every rank $3$ subgroup $E\subset A^\vee$, the orthogonal representation $\sum_{\lambda\in E}\lambda$ admits a $\Spin$ structure, and we set $\tau_E = \beta^4\cdot b_{\Sigma_{\lambda\in E}\lambda}^{-1}$. Let us agree to call any class in $\pi_\star KU_A$ which is a product of classes of the form $\beta^{\pm 1}$, $\tau_\lambda^{\pm 2}$, and $\tau_E$, a \textit{Bott class}.

There are two basic types of noninvertible elements in $\pi_\star KU_A$. First are classes obtained from the case where $A$ is cyclic: for each nontrivial functional $\lambda\in A^\vee$, there is a class $\rho_\lambda\in \pi_{-\lambda} KU_A$ obtained as the Hurewicz image of the class in $\pi_{-\lambda}S_A$ represented by the inclusion of poles $S^0\rightarrow S^\lambda$. Second are classes present only when $A$ is of rank at least $2$: for each rank $2$ subgroup $H\subset A^\vee$, there is a unique class $k_H\in \pi_{4-\Sigma_{\lambda\in H}\lambda}KU_A$ such that $2k_H = \tr(1)$, where $\tr\colon \pi_0 KU\rightarrow \pi_{4-\Sigma_{\lambda\in H}\lambda}KU_H$ is the transfer. We will construct $k_H$ in \cref{lem:halftransfer}.

We also give names to the following elements of $\pi_0 KU_A$:
\[
d_\lambda = \rho_\lambda^2\tau_\lambda^{-2}\beta,\qquad \sigma_\lambda = 1 - d_\lambda,\qquad h_\lambda = 1 + \sigma_\lambda.
\]
Under the isomorphism $\pi_0 KU_A\cong RU(A)$, the class $\sigma_\lambda$ corresponds to the character $\lambda\otimes\bbC$, and $h_\lambda = \bbC[A/\ker(\lambda)]$.

\subsubsection{Basis}\label{sssec:basis}

If $\pi_\xi KU_A\neq 0$, then there is a nonzero class $x\in \pi_\xi KU_A$ of the form
\[
x = \rho_{\lambda_1}\cdots \rho_{\lambda_n} \cdot t \cdot k_{H_1}\cdots k_{H_m}
\]
satisfying the following conditions. Write $K = H_1+\cdots+H_m$ and $L = \langle \lambda_1,\ldots,\lambda_n\rangle$. Then
\begin{enumerate}
\item $\lambda_1,\ldots,\lambda_n\in A^\vee$ are linearly independent;
\item $t$ is a Bott class;
\item $K$ is of rank $2m$;
\item $K\cap L = 0$.
\end{enumerate}
We shall call such a monomial a \textit{basic monomial}, and refer to the classes represented by basic monomials as \textit{basic generators}. These representations are not unique. We now have
\begin{align*}
\pi_\xi KU_A = \bbZ\{x\}\otimes RU(A)/\left((\sigma_{\lambda}+1):\lambda\in \{\lambda_1,\ldots,\lambda_n\},~(\sigma_\lambda-1):\lambda\in K\right)
\end{align*}
as $\pi_0 KU_A$-modules. This is largely a reinterpretation of the computation of $\pi_\star KU_A$ by Hu--Kriz \cite{hukriz2006rog}, as we will explain in \cref{ssec:basis}.

\begin{rmk}
By relation \ref{r9} below, one may always suppose a basic generator is represented by a basic monomial as above satisfying $n\leq 2$. Alternately, if $n\neq 0$, then one may suppose $m = 0$.
\tqed
\end{rmk}

\begin{ex}
If $\xi = 3 - \lambda - \mu - \lambda \mu - \kappa - \lambda\mu\kappa$, then $x = \rho_{\lambda\kappa}\rho_{\mu\kappa}\tau_{\langle\lambda,\mu,\kappa\rangle}\tau_{\lambda\kappa}^{-2}\tau_{\mu\kappa}^{-2}$.
\tqed
\end{ex}

\subsubsection{Relations}\label{sssec:relations}

The multiplicative structure of $\pi_\star KU_A$ is determined by the following:

\begin{enumerate}[label={R.\arabic*}]
\item \label{r1} All basic monomials (\ref{sssec:basis}) in the same degree represent the same class;
\item \label{r2} $\rho_\lambda h_\lambda = 0$, or equivalently, $\sigma_\lambda\rho_\lambda = - \rho_\lambda$;
\item \label{r3} $d_{\lambda\mu} = d_\lambda+d_\mu-d_\lambda d_\mu$, or equivalently, $\sigma_{\lambda\mu} = \sigma_\lambda\sigma_\mu$;
\item \label{r4} $\rho_\lambda\rho_\mu\rho_{\lambda\mu}=0$;
\item \label{r5} $\rho_\lambda k_H = 0$ for $\lambda\in H$;
\item \label{r6} $k_{\langle \lambda,\mu\rangle} k_{\langle \lambda,\kappa\rangle} = 2 \tau_{\langle\lambda,\mu,\kappa\rangle}\tau_{\mu\kappa}^{-2}\tau_{\lambda\mu\kappa}^{-2}k_{\langle \lambda,\mu\kappa\rangle} - \rho_\mu\rho_\kappa\rho_{\lambda\mu}\rho_{\lambda\kappa}\tau_\lambda^2\beta^2$;
\item \label{r7} $k_{\langle\lambda,\mu\rangle}^2 = \tau_\lambda^2\tau_\mu^2\tau_{\lambda\mu}^2 h_\lambda h_\mu$.
\end{enumerate}
This will be shown in \cref{ssec:relations}.

\begin{ex}\label{ex:basic}
We record the following special cases of \ref{r1}:
\begin{enumerate}[label={R.\arabic*},resume]
\item \label{r8} $\tau_{\langle\lambda,\mu,\kappa\rangle}^2 = \tau_\lambda^2\tau_\mu^2\tau_\kappa^2\tau_{\lambda\mu}^2\tau_{\lambda\kappa}^2\tau_{\mu\kappa}^2\tau_{\lambda\mu\kappa}^2$;
\item \label{r9} $\rho_\lambda\rho_\mu\rho_\kappa\tau_{\langle\lambda,\mu,\kappa\rangle}\beta = \rho_{\lambda\mu\kappa}\tau_\lambda^2\tau_\mu^2\tau_\kappa^2 k_{\{1,\lambda\mu,\lambda\kappa,\mu\kappa\}}$;
\item \label{r10} $\tau_{\langle\mu,\kappa,\delta\rangle}\tau_{\lambda\kappa}^2\tau_{\lambda\mu\kappa}^2 k_{\langle\lambda,\mu\rangle} k_{\langle\kappa,\delta\rangle} = \tau_{\langle\lambda,\mu,\kappa\rangle}\tau_\delta^2\tau_{\delta\kappa}^2 k_{\langle\kappa,\mu\delta\rangle} k_{\langle\lambda\kappa,\mu\rangle}$;
\item \label{r11} $\rho_\lambda\rho_\mu \tau_{\langle\kappa,\lambda\mu,\delta\rangle}\tau_{\lambda\delta}^2\tau_{\mu\delta}^2 k_{\langle\lambda\mu\kappa,\delta\rangle} = \rho_{\lambda\delta}\rho_{\mu\delta} \tau_{\langle\lambda,\mu,\delta\rangle}\tau_{\lambda\mu\kappa}^2\tau_{\lambda\mu\kappa\delta}^2k_{\langle\kappa,\delta\rangle}$.
\end{enumerate}
Here, \ref{r11} is redundant, being implied by \ref{r9}. It is plausible that \ref{r1} could be replaced by some minimal set of relations such as these, but we shall not pursue this.
\tqed
\end{ex}

\begin{rmk}
\ref{r6} may rewritten as $k_{\langle\lambda,\mu\rangle}k_{\langle\lambda,\kappa\rangle} = h_\mu\cdot \tau_{\langle\lambda,\mu,\kappa\rangle}\tau_{\mu\kappa}^{-2}\tau_{\lambda\mu\kappa}^{-2}k_{\langle\lambda,\mu\kappa\rangle}$, although this is no longer symmetric.
\tqed
\end{rmk}

\begin{rmk}
It is interesting to observe that relations \ref{r2} and \ref{r3} do not imply \ref{r4}, but do imply $2\rho_\lambda\rho_\mu\rho_{\lambda\mu} = 0$, and that this is all that holds in $\pi_\star KO_A$ (\cref{sec:ko}).
\tqed
\end{rmk}

\subsubsection{Restrictions}

Fix a second elementary abelian $2$-group $B$. For any homomorphism $g\colon A\rightarrow B$ there is a restriction
\[
g^\ast\colon \pi_\star KU_B\rightarrow \pi_{g^{\ast}\star}KU_A.
\]
This is determined by the following.
\begin{enumerate}
\item $g^\ast$ is a ring homomorphism;
\item $g^\ast$ preserves Bott classes;
\item $g^\ast(\rho_\lambda) = \rho_{g^\ast\lambda}$, with the understanding that $\rho_1=0$;
\item If $g^\vee H\subset A^\vee$ is of rank $2$, then $g^\ast(k_H) = k_{g^\vee H}$; if $g^\vee H\subset A^\vee$ is cyclic with generator $\lambda$, then $g^\ast(k_H) = \tau^2_\lambda h_\lambda$; and if $g^\vee H \subset A^\vee$ is trivial then $g^\ast(k_H) = 2$.
\end{enumerate}
Here, (4) holds by \cref{lem:halftransfer} and the definition of $k_H$, and the rest are clear.

\begin{ex}
Write $j\colon \ker(\lambda)\subset A$. Then $j^\ast(\tau_{\langle\lambda,\mu,\kappa\rangle} )= \tau_{j^\ast(\mu)}^2\tau_{j^\ast(\kappa)}^2\tau_{j^\ast(\mu\kappa)}^2$.
\tqed
\end{ex}

\subsubsection{Transfers}\label{sssec:transfers}

To any subgroup inclusion $j\colon L\subset A$, there is a transfer $j_!\colon \pi_{j^\ast \star} KU_L\rightarrow\pi_\star KU_A$. These are transitive, so to describe their effect it is sufficient to consider the case where $L = \ker(\lambda)$ is a codimension $1$ subgroup. Now $j_!$ is determined by the following.

\begin{enumerate}[label={T.\arabic*}]
\item \label{t1} $j_!$ is $\pi_\star KU_A$-linear, i.e.\ $j_!(x\cdot j^\ast(y)) = j_!(x)\cdot y$ for $x\in \pi_{j^\ast \star}KU_{\ker(\lambda)}$ and $y\in \pi_\star KU_A$;
\item \label{t2} $j_!\colon \pi_0 KU_{\ker(\lambda)}\rightarrow \pi_0 KU_A$ satisfies $j_!(1) = h_\lambda\in \pi_0 KU_A$;
\item \label{t3} $j_!\colon \pi_{2-2 j^\ast\mu}KU_{\ker(\lambda)}\rightarrow\pi_{2-\mu-\lambda\mu}KU_A$ satisfies $j_!(\tau_{j^\ast\mu}^2) = \rho_\mu\rho_{\lambda\mu}\beta$;
\item \label{t4} $j_!\colon \pi_{2-2j^\ast\mu}KU_{\ker(\lambda)}\rightarrow\pi_{3-\lambda-\mu-\lambda\mu}KU_A$ satisfies $j_!(\tau_{j^\ast\mu}^2) = k_{\langle\lambda,\mu\rangle}$.
\end{enumerate}
This will be shown in \cref{ssec:transfer}.

\begin{ex}\label{ex:specialtransfer}
\hphantom{blank}
\begin{enumerate}[label={T.\arabic*},resume]
\item \label{t5} $j_!\colon \pi_0 KU_{\ker(\lambda)}\rightarrow \pi_{\mu-\lambda\mu+\kappa-\lambda\kappa}KU_A$ satisfies $j_!(1) =\tau_{\langle\lambda,\mu,\kappa\rangle}^{-1}\tau_{\lambda\mu}^2\tau_{\lambda\kappa}^2 k_{\langle\lambda,\mu\kappa\rangle}$.
\end{enumerate}
This follows from \ref{t1} and \ref{t4}, using $1=j^\ast(\tau_{\langle\lambda,\mu,\kappa\rangle}^{-1}\tau_{\lambda\mu}^2\tau_{\lambda\kappa}^2\tau_{\mu\kappa}^2)$.
\tqed
\end{ex}

\subsubsection{Weyl action}\label{sssec:weyl}

For any subgroup $j\colon L\subset A$, there is an action of the Weyl group $W_AL=A/L$ on $\pi_{j^\ast\star}KU_L$. Together with all the preceding, this makes the collection $\{\pi_\star KU_L:L\subset A\}$ into an $RO(A)$-graded Green functor \cite{green1971axiomatic,lewismandell2006equivariant}. To describe this action we may reduce to the case where $L = \ker(\lambda)$ is a codimension $1$ subgroup, so that $W_A L$ is cyclic with generator $Q$. Now $Q$ acts by
\[
Q x = j^\ast j_!(x) - x.
\]
This is merely a reformulation of the double coset formula.

\begin{ex}
If $\xi = i^\ast(\zeta)$ for any section $i\colon A\rightarrow L$ and $\zeta\in RO(L)$, then $Q$ acts trivially on $\pi_{j^\ast\xi} KU_L = \pi_\zeta KU_L$. On the other hand,
\begin{enumerate}
\item $Q$ acts on $\pi_{j^\ast(2-\mu-\lambda\mu)} KU_L= \bbZ\{\tau_{j^\ast\mu}^2\}\otimes RU(L)$ as multiplication by $-\sigma_{j^\ast\mu}$ (\ref{t3});
\item $Q$ acts on $\pi_{j^\ast(3-\lambda-\mu-\lambda\mu)}KU_L=\bbZ\{\tau_{j^\ast\mu}^2\}\otimes RU(L)$ as multiplication by $\sigma_{j^\ast\mu}$ (\ref{t4}).
\tqed
\end{enumerate}
\end{ex}

\subsubsection{Power operations}

Equivariant $K$-theory is equipped with power operations, as constructed by Atiyah \cite{atiyah1966power}. From this, one may produce for every subgroup $j\colon L\subset A$ a multiplicative norm map 
\[
j_\otimes\colon \pi_\star KU_L\rightarrow \pi_{j_!\star}KU_A.
\]
Together with all the preceding, these norms make $\{\pi_\star KU_L:L\subset A\}$ into some flavor of Tambara functor \cite{tambara1993multiplicative,angeltveitbohmann2018graded}, although we shall not make use of this formalism. By transitivity, to describe this it is sufficient to instead describe the external squaring operation
\[
\Sq\colon \pi_\star KU_A\rightarrow \pi_{\star(1+\sigma)} KU_{A\times C_2},
\]
where $\sigma$ denotes the generating functional on $C_2$. This is determined by the following.
\begin{enumerate}
\item $\Sq(xy) = \Sq(x)\Sq(y)$;
\item $\Sq(x+y) = \Sq(x)+\Sq(y) + \tr(xy)$, where $\tr$ is the transfer;
\item $\Sq$ preserves Bott classes;
\item $\Sq(\rho_\lambda)=\rho_\lambda\rho_{\lambda\sigma}$;
\item $\Sq(k_{\langle\lambda,\mu\rangle}) = \tau_{\langle\lambda,\mu,\sigma\rangle}\tau_\sigma^{-4}(\sigma_\lambda+\sigma_\mu+\sigma_{\lambda\mu}+\sigma_\sigma)$.
\end{enumerate}
Here, (1) and (2) are general properties of $\Sq$, and the rest will be computed in \cref{ssec:squaring}.

\begin{rmk}
Regarding (3), explicitly we have
\begin{gather*}
\Sq(\beta) = \tau_\sigma^{-2}\beta^2,\qquad \Sq(\tau_\lambda^2) = \tau_\lambda^2\tau_{\lambda\sigma}^2\tau_\sigma^{-2}, \\
\Sq(\tau_{\langle\lambda,\mu,\kappa\rangle}) = \tau_{\langle\lambda,\mu,\kappa\sigma\rangle}\tau_{\langle\lambda,\mu,\kappa\rangle}\tau_{\langle\lambda,\mu,\sigma\rangle}\tau_\lambda^{-2}\tau_{\mu}^{-2}\tau_{\lambda\mu}^{-2}\tau_\sigma^{-8},
\end{gather*}
where the last monomial is noncanonical though the class itself is not.
\tqed
\end{rmk}

\subsubsection{Adams operations}

Fix an odd integer $\ell$. Then the Adams operation $\psi^\ell$ acts on $\pi_\star KU_A[\tfrac{1}{\ell}]$ by ring automorphisms \cite{hiratakono1982bott}, and is given on generators by the following.

\begin{enumerate}
\item 
$
\psi^\ell(\beta) = \ell \beta.
$
\item 
$
\psi^\ell(\tau_\lambda^2) = \tau_\lambda^2(1+\tfrac{1}{2}(\ell-1)d_\lambda).
$
\item 
$
\psi^\ell(\tau_E) = \tau_E(1+\tfrac{1}{8}(\ell^2-1)\sum_{\lambda\in E\setminus \{1\}}d_\lambda).
$
\item 
$
\psi^\ell(\rho_\lambda)=\rho_\lambda.
$
\item 
$
\psi^\ell(k_H) = k_H.
$
\end{enumerate}

Here, (1) is standard, and (4) is clear; (2) is pulled back from \cref{lem:cyclicadams} along $\lambda\colon A\rightarrow C_2$, and we shall see (3) in \cref{lem:psie} and (5) in \cref{lem:halftransfer}.

\begin{ex}
$\psi^{-1}(\tau_\lambda^2)=\tau_\lambda^2\sigma_\lambda$ and $\psi^{-1}(\tau_E) = \tau_E$.
\tqed
\end{ex}

\begin{rmk}
Using \ref{r3}, one may rewrite $\psi^\ell(\tau_{\langle\lambda,\mu,\kappa\rangle})$ as
\[
\tau_{\langle\lambda,\mu,\kappa\rangle}(1+\tfrac{1}{2}(\ell^2-1)(d_\lambda+d_\mu+d_\kappa)-\tfrac{1}{4}(\ell^2-1)(d_\lambda d_\mu+d_\lambda d_\kappa+d_\mu d_\kappa)+\tfrac{1}{8}(\ell^2-1)d_\lambda d_\mu d_\kappa).
\]
\tqed
\end{rmk}

This concludes our statement of \cref{thm:kua}.

\subsection{Low ranks}\label{ssec:lowranks}

Let $\sigma$ be the generating functional of $C_2$. We begin by considering $KU_{C_2}$; here we omit the subscript $\sigma$ from the classes in $\pi_\star KU_{C_2}$ introduced in \cref{sssec:generators}. For this material see also \cite{balderrama2021borel}.

\begin{lemma}\label{lem:cyclic}
$
\pi_\star KU_{C_2} = \bbZ[\beta^{\pm 1},\tau^{\pm 2},\rho]/(\rho\cdot h).
$
\end{lemma}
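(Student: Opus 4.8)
The plan is to compute $\pi_\star KU_{C_2}$ directly from the equivariant Bott periodicity and the fundamental cofiber sequence, identifying each named generator with its effect. The representations of $C_2$ are indexed by $\bbZ[\sigma]$ where $\sigma$ is the sign functional, so $RO(C_2) = \bbZ\{1,\sigma\}$ and a general grading is $\xi = a + b\sigma$. First I would recall from Atiyah's equivariant Bott periodicity that $\beta \in \pi_2 KU_{C_2}$ and the Thom class $b_{2\sigma}$ (coming from the complex structure on $2\sigma = \sigma\otimes\bbC$) are invertible, which immediately yields invertibility of $\beta$ and $\tau^2 = \beta\cdot b_{2\sigma}^{-1}$. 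The point is that complex $K$-theory of a point is $2$-periodic, and equivariantly the $\sigma$-direction is controlled entirely by these two Bott classes together with the single noninvertible class $\rho = \rho_\sigma \in \pi_{-\sigma}KU_{C_2}$.

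Next I would pin down the ring structure using the cofiber sequence $C_{2+} \to S^0 \xrightarrow{\rho} S^\sigma$ from convention (5). Smashing with $KU_{C_2}$ and taking homotopy gives a long exact sequence relating $\pi_\star KU_{C_2}$ in adjacent $\sigma$-gradings to $\pi_{\star}KU$ (the nonequivariant coefficients, via the free orbit $C_{2+}$, using $\pi_\star(KU_{C_2}\wedge C_{2+}) \cong \pi_{\star}KU$ by the induction/restriction adjunction). Since $\pi_\star KU = \bbZ[\beta^{\pm 1}]$ is concentrated in even degrees and torsion-free, this exact sequence computes the kernel and cokernel of multiplication by $\rho$ at each stage. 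Concretely, the two equalities in convention (5) specialize to identify $\ker(\rho_\sigma) = \im(j_!)$ and $\ker(j^\ast) = \im(\rho_\sigma)$ where $j\colon \ker(\sigma) = \{e\} \subset C_2$ is the inclusion of the trivial subgroup. Working degree by degree in $b$, one finds that multiplying by $\rho$ either is injective or has kernel/cokernel governed exactly by the transfer from the underlying nonequivariant theory.

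The remaining content is the single relation $\rho\cdot h = 0$, where $h = h_\sigma = \bbC[C_2] = 1 + \sigma_\sigma$ corresponds to the regular representation under $\pi_0 KU_{C_2}\cong RU(C_2) = \bbZ[\sigma_\sigma]/(\sigma_\sigma^2 - 1)$. I would derive this from relation \ref{r2}, which asserts $\rho_\lambda h_\lambda = 0$; specialized to $C_2$ this is precisely $\rho\cdot h = 0$, and geometrically it records that the regular representation, being induced, is annihilated by the Euler class $\rho$ after inclusion of poles. To see there are no further relations, I would check that $\bbZ[\beta^{\pm 1},\tau^{\pm 2},\rho]/(\rho h)$ has the correct additive structure in each bidegree and maps isomorphically onto $\pi_\star KU_{C_2}$: the powers of $\rho$ account for the descending $\sigma$-gradings, the Bott classes provide the periodicities, and the quotient by $\rho h$ exactly kills the ambiguity detected by the long exact sequence.

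\emph{The hard part} will be verifying that no relations beyond $\rho h = 0$ are needed, i.e.\ that the evident surjection from the presented ring is actually an isomorphism. This requires a careful bookkeeping of the long exact sequence across all gradings $a + b\sigma$, checking that the image of $\rho^k$ and the contribution of the free part $RU(C_2)/(h)$ assemble correctly; the torsion-freeness and even concentration of $\pi_\star KU$ make each individual step routine, but confirming that the global additive count matches the presented ring is where the genuine verification lies. This computation is essentially the Atiyah-Segal / Hu-Kriz computation in the rank $1$ case, so I would lean on \cite{hukriz2006rog} and \cite{balderrama2021borel} to shortcut the additive bookkeeping.
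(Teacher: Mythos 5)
Your proposal follows essentially the same route as the paper: the cofiber sequence $C_{2+}\to S^0\to S^\sigma$ induces a long exact sequence against $\pi_\ast KU$, the relation $\rho\cdot h=0$ comes from $h=\tr(1)$ together with exactness (the composite $\rho\circ\tr$ vanishes), and the invertible classes $\beta^{\pm1},\tau^{\pm2}$ reduce every degree $a+b\sigma$ to $\pi_0$ or $\pi_{-\sigma}$, where the exact sequence pins down the additive structure. One caution: you cannot literally ``derive this from relation \ref{r2},'' since in the paper's logical order \ref{r2} is itself established by this very lemma; however, the independent justification you give in the same sentence --- that $h$ is the induced (regular) representation, hence lies in the image of the transfer and is annihilated by $\rho$ by exactness --- is precisely the paper's argument, so the circularity is easily repaired.
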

\begin{proof}
As in \cref{ssec:preliminaries}, there is a $C_2$-equivariant cofiber sequence
\[
C_{2+}\rightarrow S^0\rightarrow S^\sigma
\]
giving rise to a long exact sequence
\begin{center}\begin{tikzcd}
\cdots\ar[r]&\pi_{\star+\sigma} KU_{C_2}\ar[r,"\rho"]&\pi_\star KU_{C_2}\ar[r,"\res"]&\pi_{\dim \star} KU\ar[r,"\tr"]&\pi_{\star+\sigma-1} KU_{C_2}\ar[r]&\cdots
\end{tikzcd}.\end{center}
In particular, there is a short exact sequence
\begin{center}\begin{tikzcd}
0\ar[r]&\pi_0 KU\ar[r,"\tr"]&\pi_0 KU_{C_2}\ar[r,"\rho"]&\pi_{-\sigma}KU_{C_2}\ar[r]&0
\end{tikzcd}.\end{center}
As $\tr(1) = \bbC[C_2] = h$, we have $\rho\cdot h = 0$. This sequence also implies $\pi_{-\sigma}KU_{C_2} = \bbZ\{\rho\}$, and the lemma follows.
\end{proof}

\begin{lemma}\label{lem:cyclicadams}
The Adams operation $\psi^\ell$ for $\ell$ odd acts on $\pi_\star KU_{C_2}[\tfrac{1}{\ell}]$ by multiplicative automorphisms, and is given on generators by
\[
\psi^\ell(\beta) = \ell\beta,\qquad \psi^\ell(\tau^{-2}) = \tau^{-2}(1+\tfrac{1}{2}(\ell^{-1}-1)d),\qquad \psi^{\ell}(\rho) = \rho.
\]
\end{lemma}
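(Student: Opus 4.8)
The plan is to verify the three formulas separately, using \cref{lem:cyclic} to control the relevant groups; the only genuine computation is the one for $\psi^\ell(\tau^{-2})$. The identity $\psi^\ell(\beta)=\ell\beta$ is the classical statement that $\psi^\ell$ scales the degree-$2$ Bott class by $\ell$, and it passes to $KU_{C_2}$ since $\beta$ is inflated from the non-equivariant Bott class. For $\psi^\ell(\rho)=\rho$, recall that $\rho$ is the Hurewicz image of the class in $\pi_{-\sigma}S_{C_2}$ represented by the inclusion of poles; since $\psi^\ell$ is a natural ring operation compatible with the unit $S_{C_2}\to KU_{C_2}$, it fixes every class pulled back from the sphere, whence $\psi^\ell(\rho)=\rho$. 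That $\psi^\ell$ is a multiplicative automorphism of $\pi_\star KU_{C_2}[\tfrac1\ell]$ is standard, the point being that $\ell$ is now invertible.

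For $\psi^\ell(\tau^{-2})$, I would first use that $\tau^{\pm2}$ is a unit (\cref{lem:cyclic}) to write $\psi^\ell(\tau^{-2})=\tau^{-2}f$, where $f=\tau^2\psi^\ell(\tau^{-2})$ lies in $\pi_0 KU_{C_2}[\tfrac1\ell]$ by a degree count. Identifying $\pi_0 KU_{C_2}\cong RU(C_2)\cong\bbZ[d]/(d^2-2d)$, with $d=1-\sigma_\sigma$ and $\sigma_\sigma$ the class of the sign representation, I write $f=p+qd$ with $p,q\in\bbZ[\tfrac1\ell]$, and aim to show $p=1$ and $q=\tfrac12(\ell^{-1}-1)$.

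To compute $p$, restrict to the trivial subgroup: there $\tau^2\mapsto 1$ (as $b_{2\sigma}$ restricts to the non-equivariant Bott class) and $d\mapsto 0$ (as $\sigma_\sigma$ restricts to the trivial line), so, using that $\res$ commutes with $\psi^\ell$, we get $p=\res(f)=\res(\tau^2)\,\psi^\ell(\res(\tau^{-2}))=1$. To compute $q$, apply $\psi^\ell$ to the defining relation $d=\rho^2\tau^{-2}\beta$. Since $\psi^\ell(\sigma_\sigma)=\sigma_\sigma^\ell=\sigma_\sigma$ for $\ell$ odd we have $\psi^\ell(d)=d$, and combining $\psi^\ell(\rho)=\rho$, $\psi^\ell(\beta)=\ell\beta$, $p=1$, and $d^2=2d$ gives
\[
d=\rho^2\tau^{-2}(1+qd)\cdot\ell\beta=\ell(1+qd)\,d=\ell(1+2q)\,d,
\]
so that $\bigl(\ell(1+2q)-1\bigr)d=0$ in $RU(C_2)[\tfrac1\ell]$.

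The main (and essentially only) subtlety is this final cancellation: $d$ is a zero-divisor, so one cannot simply divide it out. Under the splitting $RU(C_2)[\tfrac1\ell]\cong\bbZ[\tfrac1\ell]\times\bbZ[\tfrac1\ell]$ sending $\sigma_\sigma\mapsto(1,-1)$, we have $d\mapsto(0,2)$, so any scalar $s$ with $sd=0$ satisfies $2s=0$ and hence $s=0$. Applying this to $s=\ell(1+2q)-1$ yields $\ell(1+2q)=1$, i.e.\ $q=\tfrac12(\ell^{-1}-1)$, and therefore $\psi^\ell(\tau^{-2})=\tau^{-2}\bigl(1+\tfrac12(\ell^{-1}-1)d\bigr)$, as claimed.
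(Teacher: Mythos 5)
Your proof is correct, but it takes a genuinely different route from the paper. The paper embeds $\pi_\star KU_{C_2}[\tfrac{1}{\ell}]$ into its $\rho$-completion, identifies $\pi_{2\sigma}(KU_{C_2}[\tfrac{1}{\ell}]/(\rho^{m+1}))$ with $KU[\tfrac{1}{\ell}]^0(P_2^{2+m})$ for stunted real projective spaces, and then quotes Adams' computation of his operations there, the only work being to match $\bar{\nu}^{(1)}$ and $\nu^{(2)}$ with $-\tau^{-2}\beta$ and $\tau^{-2}\beta d$. You instead argue internally: writing $\psi^\ell(\tau^{-2})=\tau^{-2}(p+qd)$, you pin down $p=1$ by restriction to the trivial group and $q$ by applying the ring map $\psi^\ell$ to the defining relation $d=\rho^2\tau^{-2}\beta$, using $\psi^\ell(d)=d$ (the classical Adams operation on $RU(C_2)$ fixes $\sigma_\sigma$ for $\ell$ odd), $\psi^\ell(\rho)=\rho$, $\psi^\ell(\beta)=\ell\beta$, and $d^2=2d$; your care in cancelling the zero-divisor $d$ via the character-theoretic splitting is exactly the right amount of caution, and the answer agrees with the paper's (and with $\psi^\ell(\tau^2)=\tau^2(1+\tfrac12(\ell-1)d)$ after inverting $1+\tfrac12(\ell^{-1}-1)d$). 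Your argument is more self-contained --- it avoids the external citation and the convention-matching with Adams' classes --- at the cost of leaning on two standard but unproved-here facts: that $\psi^\ell$ on $\pi_0 KU_{C_2}$ is the classical operation on $RU(C_2)$, and the identification $d=1-\sigma_\sigma$ under Atiyah's Thom-class conventions (which the paper asserts in \cref{sssec:generators}). The paper's route, by contrast, doubles as an independent check of those conventions against a classical computation.
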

\begin{proof}
As $\pi_\star KU_{C_2}[\tfrac{1}{\ell}]$ embeds into $\pi_\star (KU_{C_2})_2^\wedge$, it suffices to show these identities hold after $2$-completion. In \cite[Theorem 7.3]{adams1962vector}, Adams computes the $K$-theory of stunted real projective spaces, together with their action by $\psi^\ell$. This computation implies that the completion map $\pi_{\ast+2\sigma}KU_{C_2}\rightarrow KU^{-\ast}(P^\infty_2)\cong \lim_{n\rightarrow\infty}KU^{-\ast}(P^n_2)$, where $P^n_2 = \bbR P^n/\bbR P^1$, is an isomorphism after $2$-completion. Thus Adams' computation gives us the action of $\psi^\ell$ on $\pi_\star (KU_{C_2})_2^\wedge$, after noting that his $\bar{\nu}^{(1)}$ corresponds to our $-\tau^{-2}\beta$ and his $\nu^{(2)}$ corresponds to our $\tau^{-2}\beta d$.
\end{proof}

We also record the following here.

\begin{lemma}\label{lem:psie}
Suppose that $A$ is of arbitrary rank, and fix an odd integer $\ell$. Then the action of $\psi^\ell$ on $\pi_\star KU_A[\tfrac{1}{\ell}]$ satisfies
\[
\psi^\ell(\tau_E) = \tau_E\left(1+\tfrac{1}{8}(\ell^2-1)\sum_{\lambda\in E\setminus \{1\}}d_\lambda\right).
\]
\end{lemma}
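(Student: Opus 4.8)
The plan is to exploit that $\psi^\ell$ is a degree-preserving ring automorphism of $\pi_\star KU_A[\tfrac1\ell]$ and that $\tau_E$ is a unit (a product of invertible Bott classes). Hence I may write $\psi^\ell(\tau_E) = \tau_E\cdot u$ for a unique unit $u\in\pi_0 KU_A[\tfrac1\ell]^\times = RU(A)[\tfrac1\ell]^\times$, and the task reduces to identifying $u$ with $1 + \tfrac18(\ell^2-1)\sum_{\lambda\in E\setminus\{1\}}d_\lambda$. Since $A$ is finite abelian, $RU(A)\otimes\bbQ = \bbQ[A^\vee]$ embeds into $\prod_{a\in A}\bbQ$ via the characters $\nu\mapsto\nu(a)$, under which $\sigma_\nu\mapsto(\nu(a))_a$ and $d_\nu\mapsto(1-\nu(a))_a$. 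So it suffices to compute the value $u(a)$ of the virtual character $u$ at each $a\in A$ and match it against the claim. Writing $E^\perp=\bigcap_{\nu\in E}\ker\nu$, the map $\nu\mapsto\nu(a)$ is a character of $E$, trivial exactly when $a\in E^\perp$ and otherwise sending four of the seven nontrivial $\nu$ to $-1$; thus the claimed unit has value $1$ on $E^\perp$ and $1+8\cdot\tfrac18(\ell^2-1)=\ell^2$ off $E^\perp$.

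I would compute $u(a)$ by restriction. Fix $\lambda_0\in E\setminus\{1\}$, write $E=\langle\lambda_0,\mu,\kappa\rangle$, and let $j\colon\ker\lambda_0\hookrightarrow A$. Applying $j^\ast$ to \ref{r8} yields $j^\ast\tau_E=\pm\,\tau_{j^\ast\mu}^2\tau_{j^\ast\kappa}^2\tau_{j^\ast(\mu\kappa)}^2$ (the sign will be irrelevant). By naturality of $\psi^\ell$ together with $\psi^\ell(\tau_\nu^2)=\tau_\nu^2(1+\tfrac12(\ell-1)d_\nu)$ — which follows from \cref{lem:cyclicadams} by inverting and pulling back along $\nu\colon A\to C_2$, using $\tau_\nu^2=\nu^\ast\tau^2$ and $d_\nu=\nu^\ast d$ — I obtain
\[
j^\ast u=\frac{\psi^\ell(j^\ast\tau_E)}{j^\ast\tau_E}=\prod_{\nu\in\{\mu,\kappa,\mu\kappa\}}\bigl(1+\tfrac12(\ell-1)d_{j^\ast\nu}\bigr),
\]
the overall sign cancelling. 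Evaluating at $a\in\ker\lambda_0$, each factor contributes $1$ if $\nu(a)=1$ and $1+(\ell-1)=\ell$ if $\nu(a)=-1$, so $u(a)=\ell^m$ with $m=\#\{\nu\in\{\mu,\kappa,\mu\kappa\}:\nu(a)=-1\}$. Since $\lambda_0(a)=1$, the assignment $\nu\mapsto\nu(a)$ descends to a character of $E/\langle\lambda_0\rangle\cong(\bbZ/2)^2$: it is trivial exactly when $a\in E^\perp$, giving $m=0$, and otherwise it is a nontrivial character of $(\bbZ/2)^2$, which sends exactly two of the three nontrivial classes to $-1$, giving $m=2$. Hence $u(a)=1$ for $a\in E^\perp$ and $u(a)=\ell^2$ for $a\in\ker\lambda_0\setminus E^\perp$.

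Every $a\in A$ lies in $\ker\lambda_0$ for some $\lambda_0\in E\setminus\{1\}$: if $a\in E^\perp$ this holds for all $\lambda_0$, and otherwise $\nu\mapsto\nu(a)$ is a nontrivial character of $E$ whose kernel still contains three nontrivial functionals. So the previous paragraph determines $u(a)$ for every $a\in A$, matching the claimed expression, and injectivity of $RU(A)\otimes\bbQ\hookrightarrow\prod_{a\in A}\bbQ$ finishes the proof.

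The main obstacle is that the most direct route — squaring, using \ref{r8} to write $\tau_E^2=\prod_{\lambda\in E\setminus\{1\}}\tau_\lambda^2$ and applying the known $\psi^\ell(\tau_\lambda^2)$ — only computes $u^2$, and in the idempotent decomposition of $\bbQ[E]$ a square root is ambiguous up to a sign on each of the eight components, so a priori $u$ could differ from the claimed value by such signs. The restriction-and-evaluate argument is precisely what pins down the correct square root. One may nonetheless run the squaring as a consistency check: it produces $u^2$ with value $1$ on $E^\perp$ and $\ell^4$ off it, in agreement with $u(a)^2$.
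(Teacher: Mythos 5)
Your proof is correct and is essentially the paper's argument: the paper likewise reduces to \cref{lem:cyclicadams} by observing that the joint restriction of $\pi_{8-\Sigma_{\lambda\in E}\lambda}KU_A$ to cyclic subgroups is injective, and your evaluation of the virtual character $u$ at each $a\in A$ is exactly restriction to the cyclic subgroup $\langle a\rangle$, so your intermediate passage through $\ker\lambda_0$ is just extra bookkeeping. One small point: the ambiguity in extracting $j^\ast\tau_E$ from \ref{r8} is a priori by any unit square root of $1$ in $RU(\ker\lambda_0)$ (e.g.\ $\pm\sigma_\nu$), not only $\pm 1$, but this still cancels because $\psi^\ell$ acts trivially on $RU$ of an elementary abelian $2$-group for $\ell$ odd --- or can be avoided altogether, since $j^\ast(\tau_E)=\tau_{j^\ast\mu}^2\tau_{j^\ast\kappa}^2\tau_{j^\ast(\mu\kappa)}^2$ holds on the nose from $j^\ast(b_V)=b_{j^\ast V}$, as noted in the paper.
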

\begin{proof}
Write $\xi = 8 - \sum_{\lambda\in E}\lambda$. The joint restriction map
\[
\pi_\xi KU_A\rightarrow \prod_{\substack{i\colon L\subset A ,\\ L\text{ cyclic}}} \pi_{i^\ast \xi}KU_L
\]
is an injection, so it is sufficient to verify the stated formula for $\psi^\ell$ after restriction to any cyclic subgroup of $A$. This now follows from \cref{lem:cyclicadams}.
\end{proof}

Now suppose that $A$ is of rank $2$.

\begin{lemma}\label{lem:halftransfer}
Write $\xi = 4 - \sum_{\lambda\in A^\vee}\lambda$. Then
\[
\pi_{\xi} KU_A = \bbZ\{k\},\qquad \pi_{\xi+1}KU_A = 0,
\]
where $k$ satisfies the following properties. Choose any $j\colon\ker(\lambda)\subset A^\vee$. Identify $\ker(\lambda)\cong C_2$, and write $i\colon 1\subset C_2$. Note $j^\ast(\xi) = 2-2\sigma$, where $\sigma$ is the generating functional on $C_2$.

\begin{enumerate}
\item $k = j_!(\tau^2)$, where $j_!\colon \pi_{2-2\sigma}KU_{C_2}\rightarrow\pi_{\xi}KU_A$;
\item $2k = \tr(1)$, where $\tr\colon \pi_0 KU\rightarrow \pi_{\xi} KU_A$ is the transfer;
\item $k$ restricts to $2$ in $\pi_0 KU$;
\item $\psi^\ell(k) = k$ in $\pi_\star KU_A[\tfrac{1}{\ell}]$;
\item $k$ restricts to $\tau^2 h$ in $\pi_{2-2\sigma}KU_{C_2}$.
\end{enumerate}
\end{lemma}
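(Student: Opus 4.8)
The plan is to take (1) as the \emph{definition} of $k$, setting $k := j_!(\tau^2)$, and then to extract everything else from the cyclic computation of \cref{lem:cyclic} together with the fundamental cofiber sequence and the double coset formula. The heart of the matter is the additive statement $\pi_\xi KU_A = \bbZ\{k\}$ and $\pi_{\xi+1}KU_A = 0$ (here $\xi = 3 - \lambda - \mu - \lambda\mu$); once this is available, properties (2)--(5) are comparatively formal. For the additive statement I would run the long exact sequence attached to $A/\ker(\lambda)_+\to S^0\xrightarrow{\rho_\lambda}S^\lambda$, using the exactness relations recalled in the introduction to identify its maps as $\rho_\lambda$, $j^\ast$, and $j_!$.

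The input is \cref{lem:cyclic}: since $j^\ast\xi = 2 - 2\sigma$ we have $\pi_{j^\ast\xi}KU_{\ker(\lambda)} = \bbZ\{\tau^2,\tau^2\sigma\}$ (as $\tau^2$ is invertible and $\pi_0 KU_{\ker(\lambda)} = \bbZ\{1,\sigma\}$), while $\pi_{j^\ast(\xi+1)}KU_{\ker(\lambda)} = \pi_{3-2\sigma}KU_{C_2} = 0$ for parity reasons, since every monomial $\beta^a\tau^{2b}\rho^c$ has even underlying dimension $2(a+b)$. For $\pi_\xi KU_A$ the relevant segment is
\[
\pi_{\xi+1-\lambda}KU_A\xrightarrow{j^\ast}\pi_{2-2\sigma}KU_{\ker(\lambda)}\xrightarrow{j_!}\pi_\xi KU_A\xrightarrow{\rho_\lambda}\pi_{\xi-\lambda}KU_A,
\]
and exactness identifies $\im\bigl(j_!\bigr)$ with both $\ker(\rho_\lambda\colon\pi_\xi KU_A\to\pi_{\xi-\lambda}KU_A)$ and the cokernel of $j^\ast$ into $\pi_{2-2\sigma}KU_{\ker(\lambda)}$. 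I would show that $\rho_\lambda$ out of $\pi_\xi KU_A$ vanishes and that $\im(j^\ast) = \ker(j_!) = \bbZ\{\tau^2(1-\sigma)\}$, leaving $\pi_\xi KU_A = \im(j_!) = \bbZ\{j_!(\tau^2)\}\cong\bbZ$; for $\pi_{\xi+1}KU_A$ the vanishing of $\pi_{3-2\sigma}KU_{C_2}$ forces $\pi_{\xi+1}KU_A = \im(\rho_\lambda)$, which I would show is zero. Controlling these maps is the real obstacle: it requires knowing $\pi_\star KU_A$ in the neighbouring degrees $\xi\pm\lambda$ and $\xi+1\pm\lambda$, which I would obtain by applying the same cofiber-sequence method to degrees involving fewer functionals (an induction bottoming out at \cref{lem:cyclic}), or else by quoting the additive computation of Hu--Kriz.

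Granting the additive statement, I would deduce property (5) from the double coset formula $j^\ast j_!(x) = x + Qx$, where $Q$ generates the Weyl group $A/\ker(\lambda)$. The one genuine computation here is the Weyl action $Q\tau^2 = \sigma\tau^2$ in \emph{this} degree, coming from $Q$ acting by $-1$ on the $\lambda$-direction; this is exactly the sign distinguishing $\xi = 3-\lambda-\mu-\lambda\mu$ from the degree $2-\mu-\lambda\mu$ of \ref{t3}. It yields $j^\ast(k) = (1+\sigma)\tau^2 = \tau^2 h$, giving (5). Property (3) then follows by restricting (5) further: $\res(k) = \res^{\ker(\lambda)}_1(\tau^2 h) = 1\cdot 2 = 2$, using that the Bott class $\tau^2$ restricts to $1$ and $h = \bbC[C_2]$ restricts to its dimension. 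For (2), since $\res(k) = 2\neq 0$ the map $\res\colon\pi_\xi KU_A = \bbZ\{k\}\to\pi_0 KU = \bbZ$ is injective; writing $\tr(1) = ck$ and applying $\res$ gives $|A| = 4 = 2c$, so $\tr(1) = 2k$. For (4) this injectivity persists after inverting $\ell$, and since $\res$ commutes with $\psi^\ell$ while $\psi^\ell$ fixes $\pi_0 KU$, writing $\psi^\ell(k) = ck$ and restricting gives $2 = \res\psi^\ell(k) = \psi^\ell(2) = 2$, whence $c = 1$.

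The main obstacle is therefore the additive computation. Everything else is bookkeeping with the double coset formula and the restriction behaviour of Bott classes, but pinning down $\pi_\xi KU_A$ and $\pi_{\xi+1}KU_A$ demands genuine control of the transfer and Euler-class maps across a band of neighbouring degrees, together with the orientation sign governing the Weyl action on $\tau^2$ that produces the factor $h$ rather than $d$ in property (5).
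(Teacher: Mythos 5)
Your treatment of the additive statement is the same as the paper's: both run the long exact sequence of $A/\ker(\lambda)_+\to S^0\to S^\lambda$ against \cref{lem:cyclic}, identify $\im(j^\ast)=\bbZ\{\rho^2\beta\}$ inside $\pi_{2-2\sigma}KU_{\ker(\lambda)}=\bbZ\{\tau^2,\rho^2\beta\}$ (the paper exhibits the preimage explicitly as $\rho_\mu\rho_{\lambda\mu}\tau_\lambda^2\beta$), and read off $\pi_\xi KU_A=\bbZ\{j_!(\tau^2)\}$; both are equally terse about the neighbouring degrees, which do reduce to \cref{lem:cyclic} by a second application of the cofiber sequence. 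Where you genuinely diverge is in the order and mechanism for (2)--(5). The paper goes $(1)\Rightarrow(2)$ by transitivity of transfers and Frobenius reciprocity ($\tr(1)=j_!i_!(1)=j_!(\tau^2h)=2k$ since $\tau^2h\equiv 2\tau^2$ mod $\ker j_!$), gets (3) from the double coset formula for $\res^A_1 j_!$, gets (4) from the Hurewicz image, and only then deduces (5) by observing that $j^\ast(k)$ must lie in the $\psi^{-1}$-fixed submodule $\bbZ\{\tau^2h\}$. You instead make (5) the first nontrivial step, via $j^\ast j_!(\tau^2)=\tau^2+Q\tau^2$ and the claim $Q\tau^2=\sigma\tau^2$, and then cascade $(5)\Rightarrow(3)\Rightarrow(2),(4)$ by restriction-injectivity; those last implications are all correct and clean.

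The one point needing care is the Weyl action. The sign $Q\tau^2=\sigma\tau^2$ is correct (it matches the example the paper records later in \cref{sssec:weyl}), but your justification is thinner than the computation it stands for: you cannot obtain it by comparison with \ref{t3}, since \ref{t3} is proved in \cref{ssec:transfer} using \cref{lem:basis}, which in turn depends on the present lemma --- that route is circular. What is needed is a direct geometric evaluation of the twist by which $Q$ acts on $\pi_{j^\ast\xi}KU_{\ker(\lambda)}$ through its action on $S^{j^\ast\xi}$: the $\lambda$-coordinate contributes $-1$, but the $\{\mu,\lambda\mu\}$-block contributes $-\sigma_\sigma$ (the antipode on $S^\sigma$ represents $1-[C_2]\mapsto 1-h=-\sigma_\sigma$), and only the product of the two gives multiplication by $\sigma_\sigma$ and hence $j^\ast(k)=\tau^2h$. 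This is doable and self-consistent (note that in this degree component $Q(\rho^2\beta)=-\rho^2\beta$, so $\rho^2\beta$ is \emph{not} Weyl-fixed even though it is fixed when the same group is viewed as the restriction of $\pi_{\xi+1-\lambda}KU_A$), but it is exactly the kind of sign the paper deliberately sidesteps --- note that even for \ref{t3} the paper determines the analogous sign indirectly, from the constraint that $Q$ is an involution, rather than geometrically. So: your route works, but the Weyl computation should either be carried out in full or replaced by the paper's $\psi^{-1}$ argument, which buys the same conclusion from facts ((2) and (4)) that are cheaper to establish.
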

\begin{proof}
(1)~~ Choose $\mu\in A^\vee$ linearly independent from $\lambda$, so that $A^\vee = \langle\lambda,\mu\rangle$. The cofibering
\[
A/\ker(\lambda)_+\otimes S^{\xi-\lambda}\rightarrow S^{\xi-\lambda}\rightarrow S^{\xi},
\]
gives a short exact sequence
\begin{center}\begin{tikzcd}
0\ar[r]&\pi_{\xi+(1-\lambda)}KU_A\ar[r,"j^\ast"]&\pi_{2-2\sigma} KU_{C_2}\ar[r,"j_!"]&\pi_{\xi}KU_A\ar[r]&0
\end{tikzcd}.\end{center}
As $\xi + (1-\lambda) = -\mu-\lambda\mu+(4-2\lambda)$, we may identify
\[
\pi_{\xi+(1-\lambda)}KU_A = \pi_{-\mu-\lambda\mu}KU_A\otimes\bbZ\{\tau_\lambda^2\beta\} = \bbZ\{\rho_\mu\rho_{\lambda\mu}\tau_\lambda^2\beta\}.
\]
As
\[
\pi_{2-2\sigma}KU_{C_2}=\bbZ\{\tau^2,\rho^2\beta\},\qquad j^\ast(\rho_\mu\rho_{\lambda\mu}\tau_\lambda^2\beta) = \rho^2\beta,
\]
it follows that $\pi_{\xi} KU_A = \bbZ\{k\}$ where $k=j_!(\tau^2)$. The same cofibering shows also $\pi_{\xi+1}KU_A = 0$.

(2)~~Note that $i_!\colon \pi_0 KU\rightarrow\pi_{2-2\sigma}KU_{C_2}$ satisfies $i_!(1) = i_!(i^\ast(\tau^2)) = \tau^2\cdot i_!(i^\ast(1)) = \tau^2 h$. By transitivity and the short exact sequence used for (1), it follows that $\tr\colon \pi_0 KU\rightarrow \pi_\xi KU_A$ satisfies
\[
\tr(1) = j_! i_!(1) = j_!(\tau^2 h) = j_!(2\tau^2) = 2k.
\]

(3)~~This follows from the double coset formula, as $A$ acts trivially on $\pi_{(ji)^\ast(\xi)}KU$.

(4)~~$2k$ is in the Hurewicz image by (2), so is fixed by $\psi^\ell$. Thus the same is true for $k$.

(5)~~As $k$ is fixed by $\psi^{-1}$, its restriction to $\pi_{2-2\sigma} KU_{C_2}$ lands in the fixed submodule $H^0(\{\psi^{\pm 1}\};\pi_{2-2\sigma}KU_A) = \bbZ\{\tau^2 h\}$. Thus $j^\ast(k) = \ell\cdot \tau^2 h$ for some integer $\ell$, and $\ell = 1$ by (3).
\end{proof}

For more general $A$, we obtain the class $k_H\in \pi_{4-\Sigma_{\lambda\in H}\lambda} KU_A$ by restriction along $A\rightarrow H^\vee$.

\subsection{Basis}\label{ssec:basis}

Now let $A$ be an arbitrary finite elementary abelian $2$-group. The structure of $\pi_\star KU_A$ was investigated by Hu--Kriz in \cite{hukriz2006rog}; part of their argument can be understood as a constructive proof of the following.

\begin{lemma}\label{lem:hukriz}
Every $\xi \in RO(A)$ may be written in the form
$
\xi = \epsilon + S + V,
$
where
\begin{enumerate}
\item $\epsilon\in\{0,1\}$.
\item $S$ is a sum of virtual representations of the form $\pm 2\lambda$ and $\pm \sum_{\lambda\in E}\lambda$. In particular, $S$ is $KU$-orientable.
\item $V$ is of the form $V = \sum_{1\leq i \leq n}\lambda_i + \sum_{1\leq j \leq m}\sum_{\lambda\in H_j}\lambda$, where
\begin{enumerate}
\item $\lambda_1,\ldots,\lambda_n\in A^\vee$ are linearly independent;
\item $H_1,\ldots,H_m\subset A^\vee$ are of rank $2$ and $H_1+\cdots+H_m\subset A^\vee$ is of rank $2m$;
\item $\langle\lambda_1,\ldots,\lambda_n\rangle\cap (H_1+\cdots+H_m)=0$.
\end{enumerate}
\end{enumerate}
\end{lemma}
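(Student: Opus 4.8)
The plan is to treat this as a purely combinatorial statement about $RO(A)=\bbZ[A^\vee]$ and prove it by an explicit reduction, in the spirit of the constructive Hu--Kriz argument. Writing $\xi=\sum_{\lambda\in A^\vee}c_\lambda\lambda$, I would first strip off the $KU$-orientable part: for each nontrivial $\lambda$, subtract $2\lambda$ enough times to bring $c_\lambda$ into $\{0,1\}$, placing the even remainder into $S$. The even part of the trivial coefficient is likewise orientable and can be absorbed into $S$ (either as an even multiple of the trivial representation, or, when $A$ has rank at least $3$, via the identity $2\cdot 1=2\sum_{\lambda\in E}\lambda-\sum_{1\ne\lambda\in E}2\lambda$), leaving $\epsilon\in\{0,1\}$. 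After this step the problem reduces to the mod-$2$ support $T=\{\lambda\ne 1: c_\lambda\text{ odd}\}\subseteq A^\vee\setminus\{1\}$, and the only remaining freedom is to modify $T$ by the operations $T\mapsto T\mathbin{\triangle}(E\setminus\{1\})$ arising from adding $\pm\sum_{\lambda\in E}\lambda$ for $E$ of rank $3$ (these change $S$ and the trivial coefficient, the latter reabsorbed as above).

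The goal then becomes: using these ``puncture toggles'', bring $T$ into the normal form $T=\{\lambda_1,\ldots,\lambda_n\}\sqcup\bigsqcup_j(H_j\setminus\{1\})$ with $\langle\lambda_1,\ldots,\lambda_n\rangle\oplus H_1\oplus\cdots\oplus H_m$ an internal direct sum, which is exactly conditions (3a)--(3c); such a $T$ is the nontrivial support of the claimed $V$, since each $\sum_{\lambda\in H_j}\lambda$ contributes $H_j\setminus\{1\}$ plus one trivial summand (reabsorbed into the trivial coefficient via $m$). Viewing $T$ as a set of nonzero vectors in the $\bbF_2$-space $A^\vee$, a minimal dependency of size $3$ is precisely a block $H\setminus\{1\}$, so the enemy is (i) longer minimal circuits and (ii) circuits or blocks that fail to be mutually independent. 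The key move is: given a minimal circuit $\{\mu_1,\ldots,\mu_k\}\subseteq T$ with $k\ge 4$, toggle by $E\setminus\{1\}$ for $E=\langle\mu_1,\mu_2,\mu_3\rangle$. When $k=4$ one has $\mu_4=\mu_1\mu_2\mu_3\in E$, so all four circuit elements are deleted and only the three diagonals $\mu_1\mu_2,\mu_1\mu_3,\mu_2\mu_3$ are toggled, strictly decreasing $|T|$; when $k\ge 5$ the same toggle introduces $\mu_1\mu_2\mu_3$ and replaces the circuit by one of length $k-2$, so the longest circuit shortens.

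The main obstacle is organizing these moves into a terminating procedure that simultaneously achieves all of (3a), (3b), (3c). Shortening a single circuit is easy, but a toggle also disturbs the rest of $T$, so I would run the reduction against a lexicographic potential such as $(\dim_{\bbF_2}\langle T\rangle,\ \text{length of the longest minimal circuit},\ |T|)$, and must exhibit, for every $T$ not yet a direct sum of a free part and rank-$2$ blocks, a legal toggle that strictly decreases it. The leading term never increases, since $E=\langle\mu_1,\mu_2,\mu_3\rangle\subseteq\langle T\rangle$ forces $\langle T\mathbin{\triangle}(E\setminus\{1\})\rangle\subseteq\langle T\rangle$; the delicate point is checking that one can always either shorten the longest circuit or split off an independent $H$-block without recreating longer circuits, which is exactly the content of the Hu--Kriz analysis. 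Once $T$ is in normal form, reading off the $\lambda_i$ and the $H_j$ yields $V$, the accumulated even terms yield $S$, and the leftover unit yields $\epsilon$, completing the decomposition.
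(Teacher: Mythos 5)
The paper offers no proof of this lemma at all --- it is stated with a \qed and attributed to the proof of Theorem~1 in Hu--Kriz --- so there is no argument of the paper's to compare yours against line by line; I will assess your proposal on its own terms. Your setup is correct and is the right way to see the statement: stripping off even multiples of each nontrivial functional, absorbing the trivial coefficient (your identity $2\cdot 1 = 2\sum_{\lambda\in E}\lambda - \sum_{1\neq\lambda\in E}2\lambda$ is valid and handles the case where $2\cdot 1$ is not itself allowed in $S$), and reducing to the mod-$2$ support $T$ acted on by the toggles $T\mapsto T\mathbin{\triangle}(E\setminus\{1\})$ is exactly the right reformulation, and your local analysis of the toggle on a minimal circuit (the $k=4$ versus $k\geq 5$ dichotomy) is accurate.

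The genuine gap is the one you yourself flag: the termination of the reduction. Your lexicographic potential $(\dim_{\bbF_2}\langle T\rangle,\ \text{longest minimal circuit},\ |T|)$ is not shown to decrease, and nothing in the proposal rules out the problematic scenario --- a toggle used to shorten a circuit of length $k\geq 5$ adds up to four new elements $\mu_1\mu_2,\mu_1\mu_3,\mu_2\mu_3,\mu_1\mu_2\mu_3$, and these can enter new minimal circuits with the untouched part of $T$ whose length exceeds the previous maximum, in which case the middle coordinate increases while the first is unchanged. Deferring this verification to ``the Hu--Kriz analysis'' is circular, since that analysis is precisely what the lemma is being credited to. Note also that no cardinality-based induction can substitute: for $T=\{e_1,e_2,e_3,e_4,e_1e_2e_3e_4\}$ (a minimal $5$-circuit) every rank-$3$ subgroup meets $T$ in at most $3$ elements, so every available toggle \emph{increases} $|T|$, and indeed the normal form reached has $|T|=6$. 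One simplification worth recording if you want to complete the argument: conditions (3b) and (3c) are automatic once every minimal circuit of $T$ has length $3$, because two distinct $3$-circuits $\{a,b,ab\}$ and $\{a,c,ac\}$ sharing an element produce the minimal $4$-circuit $\{b,ab,c,ac\}$; so the entire problem is to eliminate minimal circuits of length $\geq 4$, and the only missing (but essential) ingredient is a potential function, or a rank-based induction, for which that elimination provably terminates.
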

\begin{proof}
This is contained within the proof of \cite[Theorem 1]{hukriz2006rog}, so let us just explain how to translate their work to the present context. Let $I\subset RO(A)$ be the subgroup generated by the trivial representation together with elements of the form $S$ given in (2). Then we are claiming that every element of $RO(A)/I$ is equivalent to one of the form $V$ given in (3).

Additively, we may identify $RO(A)/\bbZ\{1\} \cong \bbZ[A^\vee\setminus\{1\}]$, and there is a sequence of surjections
\[
RO(A)\rightarrow\bbF_2[A^\vee\setminus\{1\}]\rightarrow RO(A)/I.
\]
Choose a basis $A^\vee\cong\bbF_2\{\alpha_1,\ldots,\alpha_p\}$. Then $\bbF_2[A^\vee\setminus\{1\}]$ corresponds to the set of hypergraphs on $\{\alpha_1,\ldots,\alpha_p\}$ used in \cite{hukriz2006rog}. There it is shown that every hypergraph on $\{\alpha_1,\ldots,\alpha_p\}$ is equivalent in $RO(A)/I$ to a disjoint union of hypergraphs on subsets of $\{\alpha_1,\ldots,\alpha_p\}$ of cardinality at most $2$. The only hypergraphs on a set $\{\alpha,\beta\}$ with two elements are, in additive notation,
\[
0,\quad \alpha,\quad \beta,\quad \alpha\beta,\quad \alpha+\beta,\quad\alpha+\alpha\beta,\quad\beta+\alpha\beta,\quad\alpha+\beta+\alpha\beta,
\]
and a disjoint union of these is, up to adding a multiple of the trivial representation, of the form $V$ given in (3).
\end{proof}

Recall that a \textit{basic monomial} is a monomial of the form $\rho_{\lambda_1}\cdots\rho_{\lambda_n} \cdot t \cdot k_{H_1}\cdots k_{H_m}$ where $\lambda_1,\ldots,\lambda_n$ are linearly independent, $t$ is a Bott class, $H_1+\cdots+H_m$ is of rank $2m$, and $\langle\lambda_1,\ldots,\lambda_n\rangle\cap (H_1+\cdots+H_m) = 0$, and that a \textit{basic generator} is a class which may be represented by a basic monomial.

\begin{prop}\label{lem:basis}
Fix $\xi\in RO(A)$, and suppose that $\pi_\xi KU_A \neq 0$.
\begin{enumerate}
\item $\pi_{\xi+1}KU_A = 0$;
\item $\pi_\xi KU_A$ is a cyclic $RU(A)$-module generated by a basic generator $x$;
\item Choose a presentation $x = \rho_{\lambda_1}\cdots\rho_{\lambda_n}\cdot t \cdot k_{H_1}\cdots k_{H_m}$ of $x$ by a basic monomial. Then $\pi_\xi KU_A = \bbZ\{x\}\otimes RU(A)/(\sigma_{\lambda}+1:\lambda\in\{\lambda_1,\ldots,\lambda_n\},~\sigma_{\lambda}-1:\lambda\in H_1+\cdots+H_m)$.
\end{enumerate}
\end{prop}
\begin{proof}
First consider a representation $V = \sum_{1\leq i \leq n} (-\lambda_i) + \sum_{1\leq j \leq m}(4-\sum_{\lambda\in H_j}\lambda)$ satisfying the conditions necessary for $y = \rho_{\lambda_1}\cdots\rho_{\lambda_n}k_{H_1}\cdots k_{H_m}\in \pi_V KU_A$ to be a basic monomial. Write $L_i$ and $K_j$ for the quotients of $A$ dual to $\langle \lambda_i\rangle$ and $H_j$, so that the low rank calculations of \cref{ssec:lowranks} imply 
\[
\pi_{-\lambda_i}KU_{L_i}= \bbZ\{\rho_{\lambda_i}\},\qquad \sigma_{\lambda_i}\rho_{\lambda_i} = -\rho_{\lambda_i}
\]
and 
\[
\pi_{4-\Sigma_{\lambda\in H_j}\lambda}KU_{K_j}=\bbZ\{k_{H_j}\},\qquad \rho_{\lambda} k_{H_j} = k_{H_j},
\]
the latter for $\lambda\in H_j$. Let $C^\vee = \langle \lambda_1,\ldots,\lambda_n\rangle+H_1+\cdots+H_m\subset A^\vee$, and choose a splitting of the surjection $A\rightarrow C$ with complementary summand $B$. Then the external K\"unneth ismorphisms of the form $\pi_{\star'} KU_{A'}\otimes_{\pi_\ast KU}\pi_{\star''} KU_{A''}\cong \pi_{\star'+\star''}KU_{A'\oplus A''}$ imply that
\begin{align*}
\pi_V KU_A &= \pi_0 KU_B \otimes \pi_V KU_C = RU(B)\otimes\bbZ\{y\} \\
&= \bbZ\{y\}\otimes RU(A)/\left(\sigma_\lambda+1:\lambda\in\{\lambda_1,\ldots,\lambda_n\},\,\sigma_\lambda-1:\lambda\in H_1+\cdots+H_m\right)
\end{align*}
as $\pi_0 KU_A$-modules, and that $\pi_{V+1} KU_A = 0$. This proves the lemma when $\xi = V$. The general case then follows from \cref{lem:hukriz}, which implies that any $\xi \in RO(A)$ may be written in the form $\epsilon + S + V$ where $\epsilon\in\{0,1\}$ and $\pi_S KU_A$ contains a Bott class $t$.
\end{proof}

We must verify the uniqueness of basic generators.

\begin{lemma}[\ref{r4}]\label{lem:r4}
$\rho_\lambda\rho_\mu\rho_{\lambda\mu}=0$.
\end{lemma}
\begin{proof}
Note that $\beta^{-2}k_{\langle\lambda,\mu\rangle}\in\pi_{-1-\lambda-\mu-\lambda\mu}KU_A$. In particular $\pi_{-1-\lambda-\mu-\lambda\mu}KU_A\neq 0$, and thus $\pi_{-\lambda-\mu-\lambda\mu}KU_A = 0$ by \cref{lem:basis}. This implies $\rho_\lambda\rho_\mu\rho_{\lambda\mu}=0$.
\end{proof}

\begin{lemma}[\ref{r9}]\label{lem:r9}
$
\rho_\lambda\rho_\mu\rho_\kappa = \rho_{\lambda\mu\kappa}\tau_{\langle\lambda,\mu,\kappa\rangle}^{-1}\tau_\lambda^2\tau_\mu^2\tau_\kappa^2 \beta^{-1} k_{\{1,\lambda\mu,\lambda\kappa,\mu\kappa\}},
$
\end{lemma}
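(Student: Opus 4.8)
The plan is to reduce to the universal rank-$3$ case and then pin down a single integer by restricting to a cyclic subgroup. First I would note that every class in the identity is pulled back along the surjection $g\colon A\twoheadrightarrow B$ dual to the inclusion $\langle\lambda,\mu,\kappa\rangle\hookrightarrow A^\vee$, where $B\cong(\bbZ/2)^3$: by the restriction rules $g^\ast$ is a ring map preserving Bott classes, carries $\rho_\nu$ to $\rho_\nu$, and carries $k_{\langle\lambda\mu,\lambda\kappa\rangle}$ to $k_{\langle\lambda\mu,\lambda\kappa\rangle}$. It therefore suffices to prove the identity in $\pi_\star KU_B$, so I may assume $A$ has rank $3$ with $A^\vee=\langle\lambda,\mu,\kappa\rangle$.

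Next I would locate both sides in a single group. They lie in $\pi_{-\lambda-\mu-\kappa}KU_A$, and both are basic monomials: the left side has $n=3$, $m=0$, while the right side has $n=1$ (with $\lambda_1=\lambda\mu\kappa$), Bott part $\tau_{\langle\lambda,\mu,\kappa\rangle}^{-1}\tau_\lambda^2\tau_\mu^2\tau_\kappa^2\beta^{-1}$, and $m=1$ with $H=\langle\lambda\mu,\lambda\kappa\rangle$; the conditions that $H$ have rank $2$ and that $\langle\lambda\mu\kappa\rangle\cap H=0$ are immediate. Specializing the presentation of \cref{lem:basis} at the monomial $\rho_\lambda\rho_\mu\rho_\kappa$ collapses $RU(A)/(\sigma_\lambda+1,\sigma_\mu+1,\sigma_\kappa+1)$ to $\bbZ$, so $\pi_{-\lambda-\mu-\kappa}KU_A\cong\bbZ$ with generator $\rho_\lambda\rho_\mu\rho_\kappa$. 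Hence the right side equals $c\cdot\rho_\lambda\rho_\mu\rho_\kappa$ for a unique $c\in\bbZ$, and the entire content is that $c=1$.

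To evaluate $c$ I would restrict along the cyclic subgroup $i\colon C=\ker(\lambda\mu)\cap\ker(\lambda\kappa)\subset A$, on which $\lambda,\mu,\kappa$ all restrict to the generating functional $\sigma$. By \cref{lem:cyclic}, $i^\ast(\rho_\lambda\rho_\mu\rho_\kappa)=\rho^3=2\rho\tau^2\beta^{-1}$, an element of infinite order, so $i^\ast$ is injective on $\pi_{-\lambda-\mu-\kappa}KU_A$. For the right side, the restriction rules give $i^\ast(k_H)=2$ (the trivial branch, since $H$ restricts to $0$ on $C$), $i^\ast(\rho_{\lambda\mu\kappa})=\rho$, $i^\ast(\tau_\lambda^2\tau_\mu^2\tau_\kappa^2)=\tau^6$, and $i^\ast(\tau_{\langle\lambda,\mu,\kappa\rangle})=\tau^4$ (obtained from $i^\ast\sum_{\nu\in\langle\lambda,\mu,\kappa\rangle}\nu=4+4\sigma$ through the Bott-class formula). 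Multiplying, $i^\ast$ of the right side is $2\rho\tau^2\beta^{-1}$, which agrees with $i^\ast$ of the left side; injectivity then forces $c=1$.

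The main obstacle is precisely this final matching of coefficients: one must see that the factor $2$ coming from the trivial branch of the restriction formula for $k_H$ is the same factor $2$ appearing in $\rho^3=2\rho\tau^2\beta^{-1}$. Making this rigorous relies on identifying $\pi_{-\lambda-\mu-\kappa}KU_A$ as exactly $\bbZ$ (so that one cyclic restriction detects everything) and on the careful computation of $i^\ast\tau_{\langle\lambda,\mu,\kappa\rangle}$; the remaining manipulations are routine.
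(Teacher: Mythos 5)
Your proposal is correct and follows essentially the same route as the paper: reduce to the rank-$3$ case, observe that the two sides agree up to an integer factor, and pin down that integer by restricting to the diagonal cyclic subgroup $\ker(\lambda\mu)\cap\ker(\lambda\kappa)$, where the identity becomes $\rho^3 = 2\rho\tau^2\beta^{-1}$. The only (immaterial) difference is that the paper obtains the ``equal up to an integer'' step from a divisibility argument (the left side lies in the kernel of restriction to $\ker(\lambda\mu\kappa)$, hence is divisible by $\rho_{\lambda\mu\kappa}$) rather than from your identification of $\pi_{-\lambda-\mu-\kappa}KU_A\cong\bbZ\{\rho_\lambda\rho_\mu\rho_\kappa\}$.
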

\begin{proof}
Without loss of generality, we may suppose that $A$ is of rank $3$. By \cref{lem:r4}, the class $\rho_\lambda\rho_\mu\rho_\kappa$ is in the kernel of restriction to $\ker(\lambda\mu\kappa)$, and is therefore divisible by $\rho_{\lambda\mu\kappa}$. The only possibility is that
 \[
\rho_\lambda\rho_\mu\rho_\kappa = \ell\cdot  \rho_{\lambda\mu\kappa}\tau_{\langle\lambda,\mu,\kappa\rangle}^{-1}\tau_\lambda^2\tau_\mu^2\tau_\kappa^2 \beta^{-1} k_{\{1,\lambda\mu,\lambda\kappa,\mu\kappa\}}
\]
for some integer $\ell$. After restriction to $\ker(\lambda\mu)\cap\ker(\mu\kappa)\cong C_2$ this becomes
\[
\rho^3 = \ell\cdot2\rho\tau^2\beta^{-1},
\]
and thus $\ell = 1$ by \cref{lem:cyclic}.
\end{proof}

\begin{prop}\label{lem:unique}
In the situation of \cref{lem:basis}, the class $x$ is unique.
\end{prop}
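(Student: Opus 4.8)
The plan is to promote \cref{lem:basis} from a statement about one distinguished generator to a statement about all basic monomials, reduce the comparison of two basic monomials to an identity between units of an integral group ring, and then kill the residual ambiguity by hand.

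First I would observe that the proof of \cref{lem:basis} --- via \cref{lem:hukriz}, the low-rank computations of \cref{ssec:lowranks}, and the K\"unneth isomorphism --- takes as input a decomposition $\xi = \epsilon + S + V$ and outputs a generating basic monomial together with its annihilator. Since the data of any basic monomial of degree $\xi$ is exactly the data of such a decomposition, the same argument shows that \emph{every} basic monomial $x'$ of degree $\xi$ generates $\pi_\xi KU_A$ and has annihilator the ideal of \cref{lem:basis}(3). As the annihilator of a generator of a cyclic module is the annihilator of the module, any two basic monomials $x,x'$ have the same annihilator $I_\xi$; in particular the subgroup $M = L + K$ and its associated sign character are invariants of $\xi$, and $\pi_\xi KU_A \cong RU(A)/I_\xi$. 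Writing $RU(A) = \bbZ[A^\vee]$ and imposing $\sigma_\nu = \pm 1$ for $\nu \in M$ identifies this quotient with the integral group ring $\bbZ[A^\vee/M]$, whence $x' = u\cdot x$ for a unit $u$.

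Since $A^\vee/M$ is a finite elementary abelian $2$-group, every unit of $\bbZ[A^\vee/M]$ is trivial, i.e.\ of the form $\pm\sigma_\nu$; this is the classical determination of the units of such a group ring. It remains to show $u = 1$, and this is the main obstacle: $u$ cannot be seen from the degree, since multiplication by $\sigma_\nu$ preserves the $RO(A)$-grading. Two features make $u$ accessible. The invertible factors are rigid --- a Bott class of degree $0$ equals $1$, as Atiyah's Thom classes satisfy $b_V b_W = b_{V+W}$ coherently --- so the Bott-class factor of a basic monomial is pinned by its degree and contributes no ambiguity. And the genuine source of distinct presentations, namely trading three $\rho$-factors for a single $k$-factor, is precisely the content of \cref{lem:r9}.

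Concretely, I would use \cref{lem:r9} to bring both $x$ and $x'$ into a basic monomial with $n\leq 2$, reducing the comparison to a finite list of identities between basic monomials of equal degree built from the common subgroup $M$. Each such identity I would prove exactly as in \cref{lem:r9}: the difference lies in the kernel of restriction to a suitable codimension-one subgroup, hence is divisible by the relevant $\rho$, which pins the class up to an integer coefficient; that coefficient is then computed to be $1$ by restricting to a cyclic subgroup and invoking the torsion-free module description of \cref{lem:cyclic}, where no unit or sign ambiguity survives. The same cyclic restriction forces $u = +1$ rather than $-1$, since $-1$ is not realized by any basic monomial. The main obstacle is thus organizing these elementary moves into a complete (confluent) normal-form argument; once such a set of moves is fixed, each individual verification is a routine restriction computation of the type already carried out in \cref{lem:r9}.
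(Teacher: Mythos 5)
Your setup is sound and matches the paper's implicit reasoning: \cref{lem:basis} (via \cref{lem:hukriz} and the K\"unneth isomorphism) does apply to the decomposition attached to \emph{any} basic monomial, so any two basic monomials in degree $\xi$ generate the same cyclic $RU(A)$-module with the same annihilator, and hence differ by a unit of $\bbZ[A^\vee/M]$, which by Higman's theorem is trivial, i.e.\ of the form $\pm\sigma_\nu$. This is a clean reformulation of what the paper does when it observes that $\{\kappa : \sigma_\kappa x = \pm x\}$ is an invariant of the module. The problem is that everything after this point --- showing $u=1$ --- is deferred to ``a finite list of identities'' organized into ``a complete (confluent) normal-form argument,'' which you yourself flag as the main obstacle and do not carry out. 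That \emph{is} the proposition; as written, the proof is not complete. Two concrete difficulties with your plan: (i) normalizing to $n\leq 2$ leaves products of several $k_H$'s in play, so the required identities include ones like \ref{r10} relating $k_{H}k_{H'}$ to $k_{H''}k_{H'''}$, and for these the \cref{lem:r9}-style argument (``the difference is killed by restriction to a codimension-one subgroup, hence divisible by $\rho$'') does not apply because no $\rho$-factor is present; (ii) establishing that your rewriting moves are confluent --- that every sequence of applications of \ref{r9}, \ref{r10}, \ref{r11} terminates at the same monomial --- is a genuine combinatorial problem that you have not addressed, and ``$-1$ is not realized by any basic monomial'' is essentially a restatement of the claim being proved.

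The paper's proof shows how to avoid confluence entirely. Since $M$ and its sign character are invariants of the module, one may assume $A^\vee=M$, whereupon $\pi_\xi KU_A\cong\bbZ$ and the only remaining ambiguity is a global sign (no $\sigma_\nu$'s survive, and no appeal to Higman is needed). That sign is then pinned by a \emph{single} well-chosen restriction: when $n=0$, restrict to the trivial group, where every basic monomial gives $2^m>0$; when $n\geq 1$, first use \cref{lem:r9} in the direction opposite to yours --- eliminating all $k_H$'s so that $x=\rho_{\lambda_1}\cdots\rho_{\lambda_k}\cdot s$ with $m=0$ --- and then restrict to $K=\bigcap_{i<j}\ker(\lambda_i\lambda_j)$, where both monomials become $\rho_\lambda^k$. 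If you want to salvage your route, the cleanest fix is to replace the normal-form argument with this invariance-plus-one-restriction argument; alternatively, one could try to show directly that any two basic monomials have equal restriction to every cyclic subgroup and invoke injectivity of the joint cyclic restriction, but that injectivity for general $\xi$ is itself something you would need to justify.
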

\begin{proof}
Write $x = \rho_{\lambda_1}\cdots\rho_{\lambda_n}\cdot t\cdot k_{H_1}\cdots k_{H_m}$, and fix another basic generator $x' = \rho_{\lambda_1'}\cdots\rho_{\lambda_{n'}'}\cdot t'\cdot k_{H_1'}\cdots k_{H_{m'}'}$ in the same degree, so that we are claiming $x = x'$. Without loss of generality we may suppose $t = 1$. 

Note that $n=0$ if and only if $n'=0$. Indeed, $n\neq 0$ precisely when $\sigma_\kappa\cdot x = -x$ for some $\kappa$, and likewise $n'\neq 0$ precisely when $\sigma_\kappa\cdot x' = -x'$ for some $\kappa$. As both $x$ and $x'$ generate $\pi_\xi KU_A$, these conditions agree.

Suppose first $n = 0$. Observe $H_1+\cdots+H_m = \{\kappa\in A^\vee : \sigma_\kappa\cdot x = x\}$ and $H_1'+\cdots+H_m' = \{\kappa\in A^\vee : \sigma_\kappa\cdot x' = x'\}$. As both $x$ and $x'$ generate $\pi_\xi KU_A$, it follows that $H_1+\cdots+H_m = H_1'+\cdots+H_m'$. Thus we may suppose without loss of generality that $A = (H_1+\cdots+H_m)^\vee$ is of rank $2m$. In this case $\pi_\xi KU_A = \bbZ\{x\} = \bbZ\{x'\}$, and so $x = \pm x'$. As both $x$ and $x'$ restrict to $2^m$ in $\pi_0 KU$, the only possiblity is that $x = x'$.

Suppose next $n\geq 1$. By a repeated application of \cref{lem:r9}, we may expand $x$ and $x'$ into monomials of the form $x = \rho_{\lambda_1}\cdots \rho_{\lambda_k}\cdot s$ and $x' = \rho_{\lambda_1'}\cdots \rho_{\lambda_k'}\cdot s'$, where $\lambda_1,\ldots,\lambda_k$ are linearly independent,  $\lambda_1',\ldots,\lambda_k'$ are linearly independent, and $s,s'$ are Bott classes. After modifying these by a Bott class we may take $s = 1$. Observe that $\sigma_{\lambda_i'}\cdot x' = -x'$ for $1\leq i \leq k$. As both $x$ and $x'$ generate $\pi_\xi KU_A$, it follows that $\sigma_{\lambda_i'}\cdot x = -x$; thus we may write $\lambda_i' = \lambda_{n_{i,1}}\cdots \lambda_{n_{i,s_i}}$, where $n_{i,1},\ldots,n_{i,s_i}$ are distinct and $s_i$ is odd, and in particular, $\langle\lambda_1',\ldots,\lambda_k'\rangle\subset\langle\lambda_1,\ldots,\lambda_k\rangle$. In the same way we find $\langle\lambda_1,\ldots,\lambda_k\rangle\subset\langle\lambda_1',\ldots,\lambda_k'\rangle$, so these subgroups agree. So we may suppose without loss of generality that $A = \langle\lambda_1,\ldots,\lambda_k\rangle^\vee$ is of rank $k$. In this case $\pi_\xi KU_A = \bbZ\{x\} = \bbZ\{x'\}$, so that $x = \pm x'$, and we must show that this sign is positive. Let $K = \bigcap_{1\leq i < j \leq k}\ker(\lambda_i\lambda_j)$ and write $j\colon K\subset A$ for the inclusion. Write $\lambda$ for the restriction of $\lambda_1$ to $K$, so that $j^\ast(x) = \rho_\lambda^k$. By the decompositions $\lambda_i' = \lambda_{n_{i,1}}\cdots\lambda_{n_{i,s_i}}$, we find that $j^\ast(x') = \rho_\lambda^k \cdot j^\ast(s')$. As $j^\ast(s')$ is a Bott class in $\pi_0 KU_K$, it must be that $j^\ast(s') = 1$, so that $j^\ast(x) = j^\ast(x')$. Thus the sign in $x = \pm x'$ is positive, and $x = x'$.
\end{proof}

\subsection{Relations}\label{ssec:relations}

We must now verify the relations of \cref{sssec:relations}. We begin with those which are by now clear.

\begin{lemma}
\hphantom{blank}
\begin{enumerate}
\item[\ref{r1}] There is at most one basic generator in any single degree;
\item[\ref{r2}] $\rho_\lambda h_\lambda = 0$, or equivalently, $\sigma_\lambda\rho_\lambda = - \rho_\lambda$;
\item[\ref{r3}] $d_{\lambda\mu} = d_\lambda+d_\mu-d_\lambda d_\mu$, or equivalently, $\sigma_{\lambda\mu} = \sigma_\lambda\sigma_\mu$;
\item[\ref{r4}] $\rho_\lambda\rho_\mu\rho_{\lambda\mu}=0$;
\item[\ref{r5}] $\rho_\lambda k_H = 0$ for $\lambda\in H$.
\end{enumerate}
\end{lemma}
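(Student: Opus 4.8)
The plan is to treat the five relations in increasing order of difficulty, reducing each to input already assembled above. Relation \ref{r1} is nothing more than a restatement of \cref{lem:unique}: two basic monomials of a common degree $\xi$ both represent basic generators of the cyclic $RU(A)$-module $\pi_\xi KU_A$, and \cref{lem:unique} says such a generator is unique, so they coincide.

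For \ref{r2}, I would first note the two formulations are algebraically equivalent, since $h_\lambda = 1+\sigma_\lambda$ turns $\rho_\lambda h_\lambda = 0$ into $\rho_\lambda + \sigma_\lambda\rho_\lambda = 0$. Both $\rho_\lambda$ and $h_\lambda = \bbC[A/\ker(\lambda)]$ are pulled back along the quotient $\lambda\colon A\to C_2$ from the classes $\rho$ and $h$ on $C_2$; as restriction is a ring map preserving these classes (restriction rules of \cref{sssec:generators}), the relation descends from $\rho\cdot h = 0$ in $\pi_\star KU_{C_2}$, which is exactly \cref{lem:cyclic}. Relation \ref{r3} is then a computation internal to $\pi_0 KU_A\cong RU(A)\cong\bbZ[A^\vee]$: under this identification $\sigma_\lambda$ is the character $\lambda\otimes\bbC$, so $\sigma_\lambda\sigma_\mu=\sigma_{\lambda\mu}$ is simply multiplicativity of functionals in the group ring, and substituting $d_\kappa = 1-\sigma_\kappa$ and expanding yields the additive form.

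The main obstacle is \ref{r4}, and the subtlety is precisely the one flagged in the remark above: combining \ref{r2} and \ref{r3} only gives the weaker statement $2\rho_\lambda\rho_\mu\rho_{\lambda\mu}=0$. Indeed $\sigma_{\lambda\mu}\rho_\lambda\rho_\mu = \sigma_\lambda\sigma_\mu\rho_\lambda\rho_\mu = \rho_\lambda\rho_\mu$ forces $h_{\lambda\mu}\rho_\lambda\rho_\mu = 2\rho_\lambda\rho_\mu$, and multiplying by $\rho_{\lambda\mu}$ annihilates the left side through \ref{r2}; this $2$-torsion statement is all that survives in $KO$. To obtain exact vanishing I would instead show the ambient group is zero. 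Reducing without loss of generality to rank $2$ with $A^\vee=\langle\lambda,\mu\rangle$ (all three classes are pulled back along $A\to\langle\lambda,\mu\rangle^\vee$ and restriction is a ring map), the product sits in degree $\xi = -\lambda-\mu-\lambda\mu$, and I would run the bookkeeping of \cref{lem:hukriz} and \cref{lem:basis}. A basic monomial in rank $2$ has the form $\rho_{\lambda_1}\cdots\rho_{\lambda_n}\cdot t\cdot k_H^\epsilon$ with $t$ a Bott class, $n\le 2$, and $n+2\epsilon\le 2$; the Bott factor has even coefficient on every functional, the $\rho$-factor can carry odd coefficients on at most two of the three nontrivial functionals $\lambda,\mu,\lambda\mu$ (by $n\le 2$), and any $k_H$-factor forces the coefficient of the trivial functional to be odd. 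Since $\xi$ has odd coefficient on all three of $\lambda,\mu,\lambda\mu$ and even coefficient on the trivial functional, no basic monomial can have degree $\xi$; hence $\pi_\xi KU_A = 0$ by \cref{lem:basis}, and \ref{r4} follows. I expect the only care needed here is the parity argument, which is routine once organized as above.

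Finally, \ref{r5} is immediate from \cref{lem:halftransfer} together with the fundamental cofiber sequence. Reducing to rank $2$ with $A^\vee = H$ (both $\rho_\lambda$ and $k_H$ being pulled back along $A\to H^\vee$), for any $\lambda\in H$ the class $k_H = j_!(\tau^2)$ lies in the image of the transfer from $\ker(\lambda)$ by \cref{lem:halftransfer}(1). The identity $\im(j_!) = \ker(\rho_\lambda)$ from the fundamental cofiber sequence recalled in the conventions then gives $\rho_\lambda k_H = 0$ at once.
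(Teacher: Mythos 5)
Your proposal is correct, and for \ref{r1}--\ref{r4} it follows the paper's proof exactly: \ref{r1} is \cref{lem:unique}, \ref{r2} is \cref{lem:cyclic}, \ref{r3} is a computation in $RU(A)$, and \ref{r4} holds because the relevant degree vanishes by \cref{lem:basis} --- your explicit parity bookkeeping (odd coefficients on all three nontrivial functionals versus at most two from the $\rho$'s, and the parity of the trivial coefficient ruling out a $k_H$ factor) is a correct expansion of what the paper leaves implicit. The one genuine divergence is \ref{r5}: the paper again argues that the degree of $\rho_\lambda k_H$ supports no basic monomial, whereas you instead use $k_H = j_!(\tau^2)$ from \cref{lem:halftransfer} together with $\im(j_!) = \ker(\rho_\lambda)$ from the fundamental cofiber sequence. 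Both are valid one-line arguments; yours has the mild advantage of explaining \emph{why} the relation holds (transferred classes are killed by $\rho_\lambda$) rather than just that the target group is zero, and it would survive in settings where the degree count is less clean, while the paper's version keeps \ref{r4} and \ref{r5} on a uniform footing.
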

\begin{proof}
R.1.~~ This was shown in \cref{lem:unique}.

R.2.~~ This was shown in \cref{lem:cyclic}.

R.3.~~ This follows from $\pi_0 KU_A = RU(A)$ and the definition of the classes involved.

R.4.~~ This was shown in \cref{lem:r4}.

R.5.~~ This holds as the relevant degree vanishes by \cref{lem:basis}, compare \cref{lem:r4}.
\end{proof}

This leaves relations \ref{r6} and \ref{r7}.

\begin{lemma}[\ref{r6}]
$
k_{\langle \lambda,\mu\rangle} k_{\langle \lambda,\kappa\rangle} = 2 \tau_{\langle\lambda,\mu,\kappa\rangle}\tau_{\mu\kappa}^{-2}\tau_{\lambda\mu\kappa}^{-2}k_{\langle \lambda,\mu\kappa\rangle} - \rho_\mu\rho_\kappa\rho_{\lambda\mu}\rho_{\lambda\kappa}\tau_\lambda^2\beta^2.
$
\end{lemma}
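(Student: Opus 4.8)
The plan is to reduce to rank $3$ and then detect the relation on a single well-chosen cyclic subgroup. Every class in \ref{r6} is pulled back from $\langle\lambda,\mu,\kappa\rangle^\vee$ along the quotient $g\colon A\to\langle\lambda,\mu,\kappa\rangle^\vee$ dual to the inclusion $\langle\lambda,\mu,\kappa\rangle\hookrightarrow A^\vee$; since $g^\ast$ is a ring homomorphism carrying $\rho_\chi\mapsto\rho_\chi$, $k_H\mapsto k_H$, and Bott classes to Bott classes, it sends each side of the rank $3$ relation to the corresponding side of the relation over $A$, so it suffices to treat the case $A=\langle\lambda,\mu,\kappa\rangle^\vee$ of rank $3$. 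Both sides then lie in $\pi_\xi KU_A$ for $\xi = 6-2\lambda-\mu-\lambda\mu-\kappa-\lambda\kappa$. By \cref{lem:basis}, this group is free abelian of rank $2$ on $x$ and $\sigma_\mu x$, where $x = \tau_{\langle\lambda,\mu,\kappa\rangle}\tau_{\mu\kappa}^{-2}\tau_{\lambda\mu\kappa}^{-2}k_{\langle\lambda,\mu\kappa\rangle}$ is the basic generator and $\sigma_\kappa x = \sigma_\mu x$, $\sigma_{\lambda\kappa}x = \sigma_\mu x$.

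Next I would single out the rank $1$ subgroup $i\colon C=\ker(\lambda)\cap\ker(\mu\kappa)\subset A$, on which $\lambda$, $\mu\kappa$, and $\lambda\mu\kappa$ all vanish while $\mu$ and $\kappa$ both restrict to the generator $\sigma$ of $C^\vee$. The point of this choice is that $i^\ast\colon\pi_\xi KU_A\to\pi_{i^\ast\xi}KU_C$ is injective. Indeed, the defining subgroup of $k_{\langle\lambda,\mu\kappa\rangle}$ becomes trivial on $C$, so \cref{lem:halftransfer} gives $i^\ast k_{\langle\lambda,\mu\kappa\rangle}=2$ and hence $i^\ast x = 2\tau_\sigma^4$ and $i^\ast(\sigma_\mu x)=2\sigma_\sigma\tau_\sigma^4$; by \cref{lem:cyclic} these two elements are $\bbZ$-linearly independent, so $i^\ast$ is injective on the rank $2$ group $\pi_\xi KU_A$, and it is enough to verify \ref{r6} after applying $i^\ast$.

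I would then restrict each side. Restriction is multiplicative, and \cref{lem:halftransfer} gives $i^\ast k_{\langle\lambda,\mu\rangle}=i^\ast k_{\langle\lambda,\kappa\rangle}=\tau_\sigma^2 h_\sigma$, as $\langle\lambda,\mu\rangle$ and $\langle\lambda,\kappa\rangle$ each restrict to the cyclic group $\langle\sigma\rangle$; using $h_\sigma^2=2h_\sigma$ this yields
\[
i^\ast\left(k_{\langle\lambda,\mu\rangle}k_{\langle\lambda,\kappa\rangle}\right) = \left(\tau_\sigma^2 h_\sigma\right)^2 = 2\tau_\sigma^4 h_\sigma.
\]
On the right hand side, the first summand restricts to $2\cdot i^\ast x = 4\tau_\sigma^4$, while the second restricts to $\rho_\sigma^4\beta^2$ since $\mu,\kappa,\lambda\mu,\lambda\kappa$ all become $\sigma$ and $\tau_\lambda^2$ becomes $1$; rewriting $\rho_\sigma^2=d_\sigma\tau_\sigma^2\beta^{-1}$ and using $d_\sigma^2=2d_\sigma=2(1-\sigma_\sigma)$ gives $\rho_\sigma^4\beta^2=2(1-\sigma_\sigma)\tau_\sigma^4$. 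Hence the right hand side restricts to $4\tau_\sigma^4-2(1-\sigma_\sigma)\tau_\sigma^4=2(1+\sigma_\sigma)\tau_\sigma^4=2\tau_\sigma^4 h_\sigma$, matching the left hand side, and the relation follows by injectivity of $i^\ast$.

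The main obstacle is arranging the computation so that a single cyclic subgroup suffices: a generic cyclic restriction cannot separate $x$ from $\sigma_\mu x$, because whenever $k_{\langle\lambda,\mu\kappa\rangle}$ restricts to $\tau_\sigma^2 h_\sigma$ the factor $h_\sigma$ is fixed by $\sigma_\sigma$ and the restriction is sign-symmetric; the subgroup $C=\ker(\lambda)\cap\ker(\mu\kappa)$ is precisely the one on which $k_{\langle\lambda,\mu\kappa\rangle}$ degenerates to $2$, breaking this symmetry and making $i^\ast$ injective. Granting that, the only remaining work is the bookkeeping of the fourfold product $\rho_\mu\rho_\kappa\rho_{\lambda\mu}\rho_{\lambda\kappa}$ under restriction, which collapses via \cref{lem:cyclic} to the identity $d_\sigma^2=2d_\sigma$. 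As an alternative one could avoid restriction altogether, using \cref{lem:r9} together with \ref{r8} to rewrite the second summand on the right as $d_{\lambda\kappa}x$, recognize the first as $2x$ so that the right hand side becomes $h_{\lambda\kappa}x=(1+\sigma_\mu)x$, and then identify the left hand side as the same class by expressing it through transfers from $\ker(\lambda)$; I expect the restriction argument above to be the more transparent of the two.
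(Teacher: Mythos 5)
Your proof is correct, and it detects the relation differently from the paper. Both arguments begin identically: reduce to $A$ of rank $3$ and use \cref{lem:basis} to identify $\pi_\xi KU_A$ as free of rank $2$ over $\bbZ$ on $x$ and $\sigma_\mu x$ (equivalently, on $x$ and $h_\mu x$, which is the basis the paper uses). The paper then pins down $k_{\langle\lambda,\mu\rangle}k_{\langle\lambda,\kappa\rangle}$ with two soft facts: it restricts to $4$ in $\pi_0 KU$ and is annihilated by $\rho_\mu$ by \ref{r5}, which forces it to be $h_\mu\cdot x$; the symmetric form is then obtained by expanding $h_\mu = 2 - d_\mu$ and rewriting $d_\mu x$ (a step the paper leaves implicit). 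You instead locate a single cyclic subgroup $C=\ker(\lambda)\cap\ker(\mu\kappa)$ on which restriction is injective --- the correct and non-obvious point being that this is exactly where $k_{\langle\lambda,\mu\kappa\rangle}$ degenerates to $2$ rather than to $\tau_\sigma^2 h_\sigma$, so that $i^\ast x$ and $i^\ast(\sigma_\mu x)$ become independent --- and then verify both sides land on $2\tau_\sigma^4 h_\sigma$ by direct computation in $\pi_\star KU_{C_2}$. Your computations of the restrictions ($i^\ast\tau_E=\tau_\sigma^4$, $i^\ast k_{\langle\lambda,\mu\rangle}=\tau_\sigma^2h_\sigma$ via \cref{lem:halftransfer}(5), $\rho_\sigma^4\beta^2=2d_\sigma\tau_\sigma^4$) all check out. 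The trade-off: the paper's route is shorter because it reuses \ref{r5} and only needs the restriction to the trivial group, but it asserts the final expansion without detail; your route costs a more careful choice of detecting subgroup and some bookkeeping, but it verifies the stated symmetric form directly, with no hidden expansion step. Your closing alternative (rewriting the error term as $d_{\lambda\kappa}x = d_\mu x$ so that the right-hand side is $h_\mu x$) is in fact essentially the paper's argument.
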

\begin{proof}
Without loss of generality we may suppose that $A$ is of rank $3$, so that this product lives in the group $\bbZ\{\tau_{\langle\lambda,\mu,\kappa\rangle}\tau_{\mu\kappa}^{-2}\tau_{\lambda\mu\kappa}^{-2}k_{\langle\lambda,\mu\kappa\rangle}\}\otimes\bbZ\{1,h_\mu\}$. As $k_{\langle\lambda,\mu\rangle}k_{\langle\lambda,\kappa\rangle}$ lifts $4$ in $\pi_0 KU$, and $\rho_\mu\cdot k_{\langle\lambda,\mu\rangle} k_{\langle\lambda,\kappa\rangle} = 0$ by R.5, it follows that
\[
k_{\langle \lambda,\mu\rangle}k_{\langle\lambda,\kappa\rangle} = h_\mu\cdot\tau_{\langle\lambda,\mu,\kappa\rangle}\tau_{\mu\kappa}^{-2}\tau_{\lambda\mu\kappa}^{-2}k_{\langle \lambda,\mu\kappa\rangle}.
\]
This expands out to the more symmetric relation claimed.
\end{proof}

\begin{lemma}[\ref{r7}]\label{lem:ksquare}
$
k_{\langle\lambda,\mu\rangle}^2 = \tau_\lambda^2\tau_\mu^2\tau_{\lambda\mu}^2 h_\lambda h_\mu.
$
\end{lemma}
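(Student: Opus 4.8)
The plan is to compute $k_{\langle\lambda,\mu\rangle}^2$ directly from the description $k_{\langle\lambda,\mu\rangle}=j_!(\tau^2)$ of \cref{lem:halftransfer}, using the projection formula \ref{t1} twice. Since $k_{\langle\lambda,\mu\rangle}$ and each of $\tau_\lambda^2,\tau_\mu^2,\tau_{\lambda\mu}^2,h_\lambda,h_\mu$ is pulled back along the quotient $A\rightarrow\langle\lambda,\mu\rangle^\vee$, and restriction is a ring map, I would first reduce to the case $A=\langle\lambda,\mu\rangle^\vee$ of rank $2$. Write $j\colon\ker(\lambda)\cong C_2\subset A$, with $\sigma$ the generating functional. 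By \cref{lem:halftransfer} we then have $k:=k_{\langle\lambda,\mu\rangle}=j_!(\tau^2)$ and $j^\ast(k)=\tau^2 h$.

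Now I would compute, via \ref{t1},
\[
k^2 = j_!(\tau^2)\cdot k = j_!\bigl(\tau^2\cdot j^\ast(k)\bigr) = j_!(\tau^2\cdot\tau^2 h) = j_!(\tau^4 h).
\]
The key observation is that the Thom class $\tau_\lambda^2$ restricts to $1$ on $\ker(\lambda)$, while $\tau_\mu^2$ and $\tau_{\lambda\mu}^2$ both restrict to $\tau^2$ and $h_\mu$ restricts to $h$; hence $\tau^4 h = j^\ast(\tau_\lambda^2\tau_\mu^2\tau_{\lambda\mu}^2 h_\mu)$. Applying the projection formula a second time, together with $j_!(1)=h_\lambda$ (\ref{t2}, i.e.\ induction of the trivial representation), yields
\[
k^2 = j_!(1)\cdot\tau_\lambda^2\tau_\mu^2\tau_{\lambda\mu}^2 h_\mu = \tau_\lambda^2\tau_\mu^2\tau_{\lambda\mu}^2 h_\lambda h_\mu,
\]
which is the claimed relation.

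The only point requiring care is that the lift of $\tau^4 h$ along $j^\ast$ is not unique, so one must check the answer is independent of the choice; this is automatic, since two lifts differ by an element of $\ker(j^\ast)=\im(\rho_\lambda)$, which is annihilated by $j_!(1)=h_\lambda$ because $\rho_\lambda h_\lambda=0$ (\ref{r2}). Including the factor $\tau_\lambda^2$ — invisible after restriction to $\ker(\lambda)$ — is precisely what symmetrizes the answer into the stated form. I expect no serious obstacle here: the content is entirely the disciplined use of the projection formula. An alternative route, avoiding transfers, would identify $\pi_{6-2\lambda-2\mu-2\lambda\mu}KU_A$ as the free rank-one $RU(A)$-module on the Bott class $\tau_\lambda^2\tau_\mu^2\tau_{\lambda\mu}^2$ (via \cref{lem:basis}) and pin down the coefficient of $k^2$ by restricting to the three cyclic subgroups, using $j^\ast(k)=\tau^2 h$ and the identity $h^2=2h$ in $\pi_\star KU_{C_2}$; there the injectivity of the joint restriction in this $KU$-orientable degree, as in the proof of \cref{lem:psie}, would be the main input.
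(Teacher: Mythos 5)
Your argument is correct, but it is a genuinely different route from the paper's. The paper reduces to rank $2$ and then observes that $\pi_{6-2\lambda-2\mu-2\lambda\mu}KU_A$ is a rank-one free $RU(A)$-module in a $KU$-orientable degree, so that both sides of the relation are characterized as the unique class there lifting $4$ in $\pi_0 KU$ and annihilated by every $\rho_\delta$ (using \ref{r5} and \cref{lem:halftransfer}(3)); this is essentially your ``alternative route,'' packaged via kernels of the $\rho_\delta$ rather than joint restriction to cyclic subgroups. Your primary argument instead computes $k^2$ directly: $k^2=j_!(\tau^2\cdot j^\ast(k))=j_!(\tau^4h)=j_!(1)\cdot\tau_\lambda^2\tau_\mu^2\tau_{\lambda\mu}^2h_\mu$, using only \cref{lem:halftransfer}(1),(5) together with \ref{t1} and \ref{t2}. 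The degree bookkeeping checks out ($\tau^4h$ sits in $\pi_{j^\ast(2\xi)}KU_{C_2}$ with $2\xi=6-2\lambda-2\mu-2\lambda\mu$, and the second application of the projection formula uses the decomposition $2\xi=0+2\xi$, so the relevant $j_!(1)$ is indeed the degree-zero transfer $h_\lambda$), and there is no circularity since \ref{t1} is a formal property of transfers and \ref{t2} is immediate from the definition of $h_\lambda$, both independent of \cref{ssec:relations}. Your remark about independence of the lift of $\tau^4h$ is harmless but not logically necessary: the projection formula is an identity for each chosen lift, and the consistency across lifts via $\ker(j^\ast)=\im(\rho_\lambda)$ and \ref{r2} is only a sanity check. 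What your approach buys is that it needs neither the uniqueness statement of \cref{lem:basis} nor relation \ref{r5}; what the paper's buys is brevity and uniformity with the proof of \ref{r6}, which is handled by the same characterization trick.
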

\begin{proof}
Without loss of generality we may suppose that $A$ is of rank $2$. Now both sides of this equality are the unique class in their degree which lift $4$ in $\pi_0 KU$ and are in the kernel of $\rho_\delta$ for any $\delta\in A^\vee$.
\end{proof}

It must be verified that this is a complete set of relations.

\begin{lemma}\label{lem:link}
Suppose given rank $2$ subgroups $H_1,\ldots,H_m\subset A^\vee$ and $\lambda\in H_1+\cdots+H_m$. Then there are rank $2$ subgroups $H_1',\ldots,H_m'\subset A^\vee$ such that $\lambda\in H_1'$ and $k_{H_1}\cdots k_{H_m} = t \cdot k_{H_1'}\cdots k_{H_m'}$ for a Bott class $t$.
\end{lemma}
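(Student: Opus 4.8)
The engine of the proof is the single recombination move encoded by relation \ref{r10}, which rewrites a product $k_{\langle\lambda,\mu\rangle}k_{\langle\kappa,\delta\rangle}$ (with $\lambda,\mu,\kappa,\delta$ independent) as a Bott class times $k_{\langle\kappa,\mu\delta\rangle}k_{\langle\lambda\kappa,\mu\rangle}$. Since \ref{r10} is a special case of \ref{r1} (\cref{lem:unique}), it is available to us. The plan is to reduce the general statement to repeated application of this two--variable move, organised by an induction that drives $\lambda$ into a single subgroup. First I would reduce to the essential case: restricting along $A\rightarrow K^\vee$ for $K = H_1+\cdots+H_m$ and using that restriction is a ring map preserving Bott classes and the $k_H$, we may assume $A^\vee = K$. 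The products $k_{H_1}\cdots k_{H_m}$ we actually need to treat arise from basic monomials, where $K$ has rank $2m$ by \cref{lem:hukriz}(3); I therefore concentrate on the case in which the $H_i$ are independent, writing $K = H_1\oplus\cdots\oplus H_m$ with projections $\pi_i\colon K\rightarrow H_i$, and set the degenerate case $\operatorname{rank}K<2m$ aside as easier.

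The key step is the \emph{pair move}: given the rank $4$ subgroup $H_i\oplus H_j$ and a functional $\nu\in H_i\oplus H_j$ with $\pi_i(\nu),\pi_j(\nu)\neq 1$, there are rank $2$ subgroups $H_i',H_j'$ with $H_i'\oplus H_j' = H_i\oplus H_j$, $\nu\in H_i'$, and $k_{H_i}k_{H_j} = t\cdot k_{H_i'}k_{H_j'}$ for a Bott class $t$. To see this, choose generators so that $H_i = \langle\lambda,\mu\rangle$ and $H_j = \langle\kappa,\delta\rangle$ with $\pi_i(\nu)=\lambda$ and $\pi_j(\nu)=\kappa$, so that $\nu = \lambda\kappa$, and apply \ref{r10}: the resulting subgroups are $\langle\kappa,\mu\delta\rangle$ and $\langle\lambda\kappa,\mu\rangle$, the latter containing $\nu$. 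The Bott prefactor appearing in \ref{r10} is absorbed into $t$, and no degree bookkeeping is needed since \ref{r10} is already a homogeneous identity.

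I would then finish by induction on $\lvert\operatorname{supp}(\lambda)\rvert$, where $\operatorname{supp}(\lambda) = \{i : \pi_i(\lambda)\neq 1\}$. If $\lvert\operatorname{supp}(\lambda)\rvert = 1$, say $\operatorname{supp}(\lambda) = \{i_0\}$, then $\lambda\in H_{i_0}$, and after reindexing (the $k_H$ commute) we take $H_1' = H_{i_0}$. If $\lvert\operatorname{supp}(\lambda)\rvert\geq 2$, pick $i,j\in\operatorname{supp}(\lambda)$ and apply the pair move with $\nu = \pi_i(\lambda)\pi_j(\lambda)$, replacing $H_i,H_j$ by $H_i',H_j'$ with $\nu\in H_i'$. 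Since $\lambda = \nu\cdot\prod_{l\neq i,j}\pi_l(\lambda)$, the new support is $(\operatorname{supp}(\lambda)\setminus\{i,j\})\cup\{i'\}$, which is strictly smaller. Accumulating the Bott classes produced at each step into a single $t$, after finitely many moves $\lambda$ lies in one subgroup, which we place first.

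The main obstacle is the pair move, and specifically verifying that the relabelling always reduces $\nu$ to the form $\lambda\kappa$ so that \ref{r10} applies; this is the observation that a functional with nontrivial projection to each of two independent rank $2$ blocks becomes, after a change of generators within each block, a product of one generator from each. A secondary point is to confirm that the successive recombinations stay mutually compatible, so that the Bott classes assemble into a single $t$ with the correct degree — but this is automatic, as every move used is an instance of the homogeneous relation \ref{r10}.
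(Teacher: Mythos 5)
Your pair move is precisely the engine of the paper's proof: both arguments run on \ref{r10}, merging the two components of $\lambda$ lying in a chosen pair of blocks into a single new block, and when $H_1+\cdots+H_m$ has rank $2m$ your induction on $\lvert\operatorname{supp}(\lambda)\rvert$ is a correct way to organize the iteration. The genuine gap is the case you set aside. The lemma is stated for arbitrary rank $2$ subgroups and is used in exactly that generality: in the proof that \ref{r1}--\ref{r7} form a complete set of relations it is applied to the $H_i$ appearing in an \emph{arbitrary} monomial $\rho_{\lambda_1}\cdots\rho_{\lambda_n}k_{H_1}\cdots k_{H_m}$, and one of the three reduction steps there concerns precisely the situation where $H_1+\cdots+H_m$ is \emph{not} of rank $2m$ (it invokes the lemma for the subfamily $H_2,\ldots,H_m$, which need not be independent). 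So the claim that ``the products we actually need to treat arise from basic monomials'' is backwards: the lemma is one of the tools used to reduce general monomials to basic ones. Nor is the degenerate case easier by your method --- without a direct sum decomposition there are no projections $\pi_i$ and no $\operatorname{supp}(\lambda)$, so your induction has nothing to run on.

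The paper avoids this by inducting on $m$: if $\lambda$ lies in a proper subsum $H_1+\cdots+\widehat{H_j}+\cdots+H_m$, the inductive hypothesis applies to those $m-1$ subgroups and $H_j$ is reinstated afterwards; otherwise one may write $\lambda=\mu_1\cdots\mu_m$ with each $\mu_i\in H_i$ nontrivial and apply your pair move to $k_{H_1}k_{H_2}$, producing $H_1''\ni\mu_1\mu_2$ so that $\lambda\in H_1''+H_3+\cdots+H_m$ and the induction continues. Substituting this dichotomy for the support induction repairs your argument and subsumes the independent case. (One point to watch in either write-up: the instance of \ref{r10} being applied presupposes that the four chosen generators of $H_1$ and $H_2$ are linearly independent, i.e.\ that $H_1+H_2$ has rank $4$, which in the degenerate case is not automatic and deserves a word.)
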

\begin{proof}
We induct on $m$, the case $m=1$ being clear. In the inductive step, we may suppose $\lambda\notin H_1+\cdots+\widehat{H_j}+\cdots+H_m$ for any $1\leq j \leq m$, for otherwise the inductive hypothesis already applies. Thus we may write $H_i = \langle \mu_i,\kappa_i\rangle$ in such a way that $\lambda = \mu_1\cdots\mu_m$. Let $H_1'' = \langle \mu_1\mu_2,\kappa_1\rangle$ and $H_2' = \langle \mu_2,\kappa_1\kappa_2\rangle$. Then we have $k_{H_1}k_{H_2} = t'\cdot k_{H_1''}k_{H_2'}$ for a suitable Bott class $t'$ by \ref{r10}. By construction we have $\lambda\in H_1''+H_3+\cdots+H_m$. It follows by induction that $k_{H_1''}k_{H_3}\cdots k_{H_m} = t''\cdot k_{H_1'}h_{H_3'} \cdots k_{H_m'}$ with $\lambda\in H_1'$, and so $H_1',\ldots,H_m'$ satisfy the desired properties.
\end{proof}

\begin{prop}
The above form a complete set of relations, i.e.\
\[
\pi_\star KU_A = \bbZ[\beta^{\pm 1},\tau_\lambda^{\pm 2},\tau_E,\rho_\lambda,k_H]/I,
\]
where $I$ is spanned by relations \ref{r1}--\ref{r7}.
\end{prop}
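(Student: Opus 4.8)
The plan is to exhibit the obvious surjection and then prove it injective by a normal-form argument anchored to \cref{lem:basis}. Write $P = \bbZ[\beta^{\pm 1},\tau_\lambda^{\pm 2},\tau_E,\rho_\lambda,k_H]$ and let $\phi\colon P\to\pi_\star KU_A$ be the ring map sending each generator to the class of the same name. Since these classes generate $\pi_\star KU_A$ multiplicatively (\cref{sssec:generators} and \cref{lem:basis}), $\phi$ is surjective; and since each of \ref{r1}--\ref{r7} has been verified in $\pi_\star KU_A$, we have $I\subseteq\ker\phi$ and hence an induced surjection $\ol\phi\colon P/I\twoheadrightarrow\pi_\star KU_A$. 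It remains to prove $\ol\phi$ is injective, which I would do degree by degree: in each degree $\xi$ I will produce a $\bbZ$-spanning set of $(P/I)_\xi$ whose image under $\ol\phi$ is the explicit $\bbZ$-basis of $\pi_\xi KU_A$ furnished by \cref{lem:basis}, forcing $\ol\phi$ to be an isomorphism there.

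The heart of the argument is a reduction, modulo $I$, of an arbitrary monomial of $P$ to the form $r\cdot x_0$, where $x_0$ is a basic monomial and $r$ lies in the subring $\bbZ[d_\lambda]\subseteq P$ (recall $d_\lambda = \rho_\lambda^2\tau_\lambda^{-2}\beta$), which surjects onto $RU(A)$ by \ref{r3}. I would carry this out in the following order. First, the identity $\rho_\lambda^2 = d_\lambda\tau_\lambda^2\beta^{-1}$ lets me assume each $\rho_\lambda$ occurs to the first power, absorbing squares into $r$ and a Bott class, and \ref{r7} likewise lets me assume each $k_H$ occurs to the first power. Next I address linear dependence among the subscripts of the surviving $\rho$'s: whenever three are of the form $\lambda,\mu,\lambda\mu$ I apply \ref{r4} to kill the monomial, and otherwise I use \ref{r9} (an instance of \ref{r1}, hence in $I$) to trade a product of three independent $\rho$'s for a single $\rho$ and a factor $k_H$. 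Finally I clean up the $k$-factors: using \ref{r6}, its consequence \ref{r10}, and the rewriting of \cref{lem:link} (whose proof uses only \ref{r10}$\in I$) I arrange that $H_1+\cdots+H_m$ has full rank $2m$, and, moving any offending functional into some $H_i$ via \cref{lem:link} and then annihilating $\rho_\lambda k_{H_i}$ by \ref{r5}, I enforce $\langle\lambda_1,\dots,\lambda_n\rangle\cap(H_1+\cdots+H_m)=0$. The outcome is a basic monomial $x_0$ times a coefficient $r\in\bbZ[d_\lambda]$, and by \ref{r1} the class of $x_0$ is independent of the choices made.

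It remains to reduce the coefficient. Relation \ref{r2} gives $\sigma_{\lambda_i}x_0=-x_0$ for each $\rho$-subscript $\lambda_i$. For $\lambda\in H$, relation \ref{r5} gives $d_\lambda k_H=\rho_\lambda(\rho_\lambda k_H)\tau_\lambda^{-2}\beta=0$, whence $\sigma_\lambda k_H=k_H$; invoking \ref{r3} to write an arbitrary $\lambda\in H_1+\cdots+H_m$ as a product of functionals each lying in a single $H_i$, we obtain $\sigma_\lambda x_0=x_0$ for every such $\lambda$. Thus the $RU(A)$-action on $[x_0]$ factors through $RU(A)/(\sigma_\lambda+1:\lambda\in\{\lambda_1,\dots,\lambda_n\},~\sigma_\lambda-1:\lambda\in H_1+\cdots+H_m)$. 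Choosing a $\bbZ$-basis $\{e_\chi\}$ of this quotient ring, the elements $[x_0 e_\chi]$ span $(P/I)_\xi$ over $\bbZ$, while \cref{lem:basis} identifies their images under $\ol\phi$ with a $\bbZ$-basis of $\pi_\xi KU_A$. A $\bbZ$-spanning set mapping to a basis under a surjection is itself a basis and the map an isomorphism, so $\ol\phi$ is injective in degree $\xi$; as $\xi$ was arbitrary, $I=\ker\phi$.

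I expect the main obstacle to be the middle step — the combinatorial reduction of the $\rho$- and $k$-factors to the rank and disjointness conditions of a basic monomial. Unlike the $\rho$-powers and $k$-squares, which each reduce by a single substitution, this step mixes the two families: \ref{r9} converts $\rho$'s into $k$'s while \ref{r6} converts $k$'s back into $\rho$'s, so the rewriting need not obviously terminate. The real work is to exhibit a strictly decreasing measure (e.g.\ a lexicographic combination of the $\rho$-count and the corank of $H_1+\cdots+H_m$) under which \ref{r4}, \ref{r6}, \ref{r9}, and \cref{lem:link} always drive a monomial toward normal form rather than cycling, all while tracking the subgroup lattice of $A^\vee$.
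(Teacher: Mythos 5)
Your overall strategy---the surjection $P/I\twoheadrightarrow\pi_\star KU_A$ is clear, and injectivity follows once every monomial of $P$ is rewritten modulo $I$ as an $RU(A)$-multiple of a basic monomial, whereupon \cref{lem:basis} and the coefficient reduction finish the argument---is exactly the paper's, and your first and third paragraphs are essentially correct. But the step you yourself flag as ``the real work,'' namely termination of the rewriting, is precisely where the content of the proposition lies, and you have not supplied it; as written the proposal is incomplete at its central point. The danger you worry about is real: applying \ref{r6} in its symmetric form produces the term $\rho_\mu\rho_\kappa\rho_{\lambda\mu}\rho_{\lambda\kappa}\tau_\lambda^2\beta^2$, which raises the $\rho$-count by four; eliminating those $\rho$'s by \ref{r9} raises the $k$-count again, and it is not evident that your proposed lexicographic measure in the $\rho$-count and the corank of $H_1+\cdots+H_m$ decreases under this loop.

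The paper's resolution is an ordering of the moves that you could adopt verbatim. First, repeated application of \ref{r9} drives the $\rho$-length down to at most $2$; this phase terminates because each application strictly decreases $\rho$-length. Second, one inducts on $k$-length alone, using only moves that strictly decrease it (or kill the monomial) while never pushing $\rho$-length above $2$: \cref{lem:link}, which changes the presentation of $k_{H_1}\cdots k_{H_m}$ without changing either length; \ref{r5}, which kills the monomial when some $\lambda_i$ lies in $H_1+\cdots+H_m$; the asymmetric form $k_{\langle\lambda,\mu\rangle}k_{\langle\lambda,\kappa\rangle} = h_\mu\cdot\tau_{\langle\lambda,\mu,\kappa\rangle}\tau_{\mu\kappa}^{-2}\tau_{\lambda\mu\kappa}^{-2}k_{\langle\lambda,\mu\kappa\rangle}$ of \ref{r6}, together with \ref{r7}, which trade two $k$'s for at most one $k$ times an element of $RU(A)$ and introduce no new $\rho$'s; and, when $n=2$ and $\lambda_1\lambda_2\in H_1=\langle\lambda_1\lambda_2,\mu\rangle$, the consequence $\rho_{\lambda_1}\rho_{\lambda_2}k_{H_1} = d_{\lambda_1}\rho_{\lambda_1\mu}\rho_{\lambda_2\mu}\cdot t$ of \ref{r9} ($t$ a Bott class), which removes a $k$ while keeping $\rho$-length at $2$. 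With this organization every branch strictly decreases $k$-length or terminates outright, and your normal-form argument then goes through.
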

\begin{proof}
Let us work in the periodic quotient ring of $\pi_\star KU_A$ wherein all Bott elements are identified with $1$; no information is lost in doing so by \ref{r1}. By \cref{lem:basis}, which also incorporates \ref{r1}--\ref{r5}, it is sufficient to verify that the relations in $I$ allow us to write any monomial in the classes $\rho_\lambda$ and $k_H$ as a sum of classes which are a product of some element of $RU(A)$ with a basic generator. So fix some monomial  $x = \rho_{\lambda_1}\cdots\rho_{\lambda_n} k_{H_1}\cdots k_{H_m}$; let us say that such a monomial has $k$-length $m$ and $\rho$-length $n$. If $\lambda_i = \lambda_j$ for some $i\neq j$, then $\rho_{\lambda_i}\rho_{\lambda_j}\in RU(A)$, so we may suppose $\lambda_i\neq \lambda_j$ for $i\neq j$. By a repeated application of \ref{r9}, we may moreover suppose that $x$ has $\rho$-length at most $2$. We now induct on $k$-length without increasing $\rho$-length, splitting into the following cases.

First we claim that if $\lambda_i\in H_1+\cdots+H_m$ for some $i$, then $x = 0$. Indeed, we may suppose that $\lambda_i\in H_1$ by \cref{lem:link}, at which point $x=0$ by \ref{r5}.

Next we claim that if $n=2$ and $\lambda_1\lambda_2\in H_1+\cdots+H_m$, then $x$ is a product of a class in $RU(A)$ with a monomial of smaller $k$-length. Indeed, by \cref{lem:link}, we may suppose $\lambda_1\lambda_2\in H_1$. Write $H_1=\langle\lambda_1\lambda_2,\mu\rangle$. Then $\rho_{\lambda_1}\rho_{\lambda_1\mu}\rho_{\lambda_2\mu} = \rho_{\lambda_2}k_{\langle\lambda_1\lambda_2,\mu\rangle}$ by \ref{r9}, and thus $x = d_{\lambda_1}\rho_{\lambda_1\mu}\rho_{\lambda_2\mu} k_{H_2}\cdots k_{H_m}$, which is of the form claimed.

Finally we claim that if $H_1+\cdots + H_m$ is not of rank $2m$, then $k_{H_1}\cdots k_{H_m}$ may be written as a product of an element of $RU((H_1+\cdots+H_m)^\vee)\subset RU(A)$ with a monomial of smaller $k$-length. Indeed, after possibly rearranging $H_1,\ldots,H_m$, we may suppose $H_1\cap(H_2+\cdots+H_m)\neq 0$; choose $\lambda\neq 1$ in this intersection. Now $\lambda\in H_2+\cdots+H_m$, so by \cref{lem:link} we may suppose $\lambda\in H_2$. The claim now follows by an application of either \ref{r6} or \ref{r7} to the subword $k_{H_1}k_{H_2}$.
\end{proof}

\subsection{Transfers}\label{ssec:transfer}

Fix a codimension $1$ subgroup $\ker(\lambda)\subset A$, and consider the transfer $j_!\colon \pi_{j^\ast\star}KU_{\ker(\lambda)}\rightarrow \pi_\star KU_A$.

\begin{lemma}\label{lem:transfer}
The transfer $j_!$ satisfies the following properties:
\begin{enumerate}
\item[\ref{t1}] $j_!$ is $\pi_\star KU_A$-linear, i.e.\ $j_!(x\cdot j^\ast(y)) = j_!(x)\cdot y$ for $x\in \pi_{j^\ast \star}KU_{\ker(\lambda)}$ and $y\in \pi_\star KU_A$;
\item[\ref{t2}] $j_!\colon \pi_0 KU_{\ker(\lambda)}\rightarrow \pi_0 KU_A$ satisfies $j_!(1) = h_\lambda\in \pi_0 KU_A$;
\item[\ref{t3}] $j_!\colon \pi_{2-2 j^\ast(\mu)}KU_{\ker(\lambda)}\rightarrow\pi_{2-\mu-\lambda\mu}KO_A$ satisfies $j_!(\tau_{j^\ast(\mu)}^2) = \rho_\mu\rho_{\lambda\mu}\beta$;
\item[\ref{t4}] $j_!\colon \pi_{2-2j^\ast(\mu)}KU_{\ker(\lambda)}\rightarrow\pi_{3-\lambda-\mu-\lambda\mu}KU_A$ satisfies $j_!(\tau_{j^\ast(\mu)}^2) = k_{\langle\lambda,\mu\rangle}$.
\end{enumerate}
\end{lemma}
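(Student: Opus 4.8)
The plan is to treat \ref{t1} and \ref{t2} by general principles and then reduce \ref{t3} and \ref{t4} to the rank $2$ computations already in hand. Property \ref{t1} is the projection formula (Frobenius reciprocity) for the transfer attached to the finite-index inclusion $j\colon\ker(\lambda)\subset A$: it expresses that $j_!$ is a map of $\pi_\star KU_A$-modules, where $\pi_{j^\ast\star}KU_{\ker(\lambda)}$ is regarded as a module via $j^\ast$, and it holds for any genuine $A$-spectrum. Property \ref{t2} is the identification, on $\pi_0$, of the Mackey functor $\pi_0 KU_\bullet$ with the complex representation Green functor and of $j_!$ with induction; thus $j_!(1) = \mathrm{Ind}_{\ker(\lambda)}^A(1) = \bbC[A/\ker(\lambda)]$, which is $h_\lambda$ by the identification of $\pi_0 KU_A$ with $RU(A)$ in \cref{sssec:generators}.

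For \ref{t3} and \ref{t4} I would first reduce to the case $A\cong(\bbZ/2)^2$. Since $\lambda,\mu$ are linearly independent, I may split $A = A'\oplus A''$ with $A'$ of rank $2$ such that $\lambda,\mu$ are pulled back from $(A')^\vee = \langle\lambda,\mu\rangle$ and trivial on $A''$; then $\ker(\lambda) = \ker(\lambda|_{A'})\oplus A''$. Under the K\"unneth isomorphism $\pi_\star KU_{A'}\otimes_{\pi_\ast KU}\pi_\star KU_{A''}\cong\pi_\star KU_A$ used in \cref{lem:basis}, the transfer $j_!$ is the external product of the rank $2$ transfer $j'_!$ with the identity on $\pi_\star KU_{A''}$, while $\tau_{j^\ast\mu}^2$, $\rho_\mu\rho_{\lambda\mu}\beta$, and $k_{\langle\lambda,\mu\rangle}$ all lie in the $A'$-factor tensored with $1$. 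Hence both formulas for $A$ follow from the corresponding formulas for $A'$, and I may assume $A$ has rank $2$ with $\ker(\lambda)\cong C_2$ and generating functional $\sigma = j^\ast\mu = j^\ast(\lambda\mu)$.

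In rank $2$, property \ref{t4} is exactly \cref{lem:halftransfer}(1), which identifies $k_{\langle\lambda,\mu\rangle} = j_!(\tau^2)$. For \ref{t3}, \cref{lem:basis} gives $\pi_{2-\mu-\lambda\mu}KU_A = \bbZ\{\rho_\mu\rho_{\lambda\mu}\beta\}$ (the relations $\sigma_\mu = \sigma_{\lambda\mu} = -1$ collapse $RU(A)$ to $\bbZ$), so $j_!(\tau^2) = \ell\cdot\rho_\mu\rho_{\lambda\mu}\beta$ for a single integer $\ell$. To pin down $\ell$ I apply $j^\ast$ and invoke the double coset formula in the form $j^\ast j_! = 1 + Q$ of \cref{sssec:weyl}: since $Q$ acts on $\tau^2$ by $-\sigma_{j^\ast\mu}$, I obtain $j^\ast j_!(\tau^2) = (1-\sigma_{j^\ast\mu})\tau^2 = d_{j^\ast\mu}\tau^2 = \rho^2\beta$, whereas $j^\ast(\rho_\mu\rho_{\lambda\mu}\beta) = \rho^2\beta$ as well, using $j^\ast\mu = j^\ast(\lambda\mu) = \sigma$. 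Since $\rho^2\beta$ is a basis element of $\pi_{2-2\sigma}KU_{C_2}$ by \cref{lem:cyclic}, comparison forces $\ell = 1$.

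The only genuinely delicate points are bookkeeping ones: verifying that the transfer is compatible with the K\"unneth splitting, so that the general-rank statements reduce cleanly to rank $2$, and checking that $j^\ast$ is injective enough on the rank $1$ group $\pi_{2-\mu-\lambda\mu}KU_A$ to determine $\ell$ — which it is, since it carries the generator to the nonzero basis element $\rho^2\beta$. Everything else is formal once \cref{lem:halftransfer} is available.
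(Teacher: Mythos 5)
Your handling of \ref{t1}, \ref{t2}, and \ref{t4}, and the K\"unneth reduction of \ref{t3} to the rank $2$ case, all match the paper's argument. The problem is the step that pins down the coefficient $\ell$ in \ref{t3}: the assertion that ``$Q$ acts on $\tau^2$ by $-\sigma_{j^\ast\mu}$'' is circular. In the paper, the statement that $Q$ acts on $\pi_{j^\ast(2-\mu-\lambda\mu)}KU_{\ker(\lambda)}$ as multiplication by $-\sigma_{j^\ast\mu}$ appears in \cref{sssec:weyl} explicitly as a \emph{consequence} of \ref{t3}, extracted from the very formula $Qx = j^\ast j_!(x)-x$ you are invoking. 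The point is that the Weyl action depends on the $RO(A)$-degree $2-\mu-\lambda\mu$ and not merely on its restriction $2-2\sigma$: in the degree $j^\ast(2-2\mu)$ the class $\tau_\sigma^2=j^\ast(\tau_\mu^2)$ is \emph{fixed} by $Q$ (since $Qj^\ast y = j^\ast(h_\lambda y)-j^\ast y = j^\ast y$, using $j^\ast(h_\lambda)=2$), so the sign you are quoting is exactly the twisting that \ref{t3} is supposed to determine. As written, you have assumed the answer.

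The gap is repairable inside your own framework, and the paper's fix is the part worth internalizing: from $j_!(\tau^2)=\ell\,\rho_\mu\rho_{\lambda\mu}\beta$ and $j^\ast j_! = 1+Q$ one gets $(1+Q)\tau^2 = \ell\, d_{j^\ast\mu}\tau^2$, hence $Q\tau^2 = (\ell d_{j^\ast\mu}-1)\tau^2$; imposing $Q^2=1$ together with $d_{j^\ast\mu}^2 = 2d_{j^\ast\mu}$ forces $\ell(\ell-1)=0$, and $\ell=0$ is excluded because $\rho_\mu\rho_{\lambda\mu}\beta$ lies in $\ker(\rho_\lambda)=\im(j_!)$ while $j_!(\rho^2\beta)=h_\lambda\cdot\rho_\mu\rho_{\lambda\mu}\beta = 2\rho_\mu\rho_{\lambda\mu}\beta$ only accounts for the even multiples. (The paper runs this involution argument after first multiplying by $d_{j^\ast\mu}$, which reduces the dichotomy to $(1+Q)x=\pm 2x$ and makes the exclusion of the bad case immediate.) Everything else in your write-up is correct.
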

\begin{proof}
T.1.~~ This is a general property of transfers.

T.2.~~ This follows from the definition of $h_\lambda = 1 + \sigma_\lambda$.

T.3. Without loss of generality we may suppose that $A$ is of rank $2$. Write $\sigma = j^\ast(\mu)$. As $\rho_\mu\rho_{\lambda\mu}\beta$ is in the kernel of $\rho_\lambda$, it is in the image of $j_!$, and thus $j_!(\tau_\sigma^2) = \pm \rho_\mu\rho_{\lambda\mu}\beta$. We must show that this sign is positive. By $\pi_\star KU_A$-linearity, we may compute $j_!(\tau_\sigma^2 d_\sigma) = j_!(\tau_\sigma) d_\mu = \pm \rho_\mu \rho_{\lambda\mu}\beta\cdot d_\mu = \pm 2 \rho_\mu \rho_{\lambda\mu}\beta$, and thus $j^\ast j_!(\tau_\sigma^2 d_\sigma) = \pm 2 \tau_\sigma^2 d_\sigma$, this $\pm$ agreeing with the previous. On the other hand, let $Q$ be the generator of $A/\ker(\lambda)\cong C_2$. Then the double coset formula yields $j^\ast j_!(\tau_\sigma^2 d_\sigma) = \tau_\sigma^2 d_\sigma + Q(\tau_\sigma^2 d_\sigma)$. For $\tau_\sigma^2 d_\sigma + Q(\tau_\sigma^2 d_\sigma) = \pm 2 \tau_\sigma^2 d_\sigma$ to hold with $Q$ an involution, the only possibility is that $Q(\tau_\sigma^2 d_\sigma) = \tau_\sigma^2 d_\sigma$, so the relevant sign is positive.

T.4.~~ This was shown in \cref{lem:halftransfer}.
\end{proof}

We must verify that these properties fully determine $j_!$.

\begin{lemma}\label{lem:lambdanomial}
Fix a nontrivial functional $\lambda\in A^\vee$. Then any basic generator may be represented by a basic monomial of the form $x = \rho_{\lambda_1}\cdots\rho_{\lambda_n}\cdot t\cdot k_{H_1}\cdots k_{H_m}$ satisfying one of the following conditions:
\begin{enumerate}
\item $\lambda\notin\langle\lambda_1,\ldots,\lambda_n\rangle+H_1+\cdots+H_m$;
\item $\lambda = \lambda_1$;
\item $\lambda = \lambda_1\lambda_2$;
\item $\lambda\in H_1$.
\end{enumerate}
\end{lemma}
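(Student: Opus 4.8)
The plan is to start from an arbitrary basic monomial representing the given basic generator and to modify it, using the relations and structural lemmas already established, until it takes one of the four listed forms. The key invariant to track is the position of $\lambda$ relative to the subgroup $\langle\lambda_1,\ldots,\lambda_n\rangle + H_1+\cdots+H_m$. If $\lambda$ does not lie in this subgroup we are already in case (1), so the substance of the argument is to handle the case $\lambda\in \langle\lambda_1,\ldots,\lambda_n\rangle + H_1+\cdots+H_m$ and push the monomial into one of cases (2), (3), or (4).

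First I would dispose of the $\rho$-part. Since a basic generator may be assumed to have $\rho$-length $n\leq 2$ (by the remark following \cref{sssec:basis}, i.e.\ by repeated application of \ref{r9}), we may assume $n\in\{0,1,2\}$. Writing $L = \langle\lambda_1,\ldots,\lambda_n\rangle$ and $K = H_1+\cdots+H_m$, recall $L\cap K = 0$, so $\lambda\in L+K$ has a unique decomposition $\lambda = \lambda_L\cdot\lambda_K$ with $\lambda_L\in L$, $\lambda_K\in K$. The plan is then to split on whether $\lambda_K$ is trivial. If $\lambda_K = 1$, then $\lambda\in L$, and since $L$ has rank $n\leq 2$, the functional $\lambda$ is either one of the generators or the product of the two; after relabeling (the $\rho_{\lambda_i}$ commute and any two independent functionals generating $L$ may be chosen as a new basis, absorbing the change into a Bott class), I can arrange $\lambda = \lambda_1$ (case (2)) or $\lambda = \lambda_1\lambda_2$ (case (3)).

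The remaining and most delicate situation is $\lambda_K\neq 1$, i.e.\ $\lambda$ has a nontrivial component in $K$. Here the main tool is \cref{lem:link}: since $\lambda_K\in H_1+\cdots+H_m$, that lemma lets me replace the $k$-factors by $k_{H_1'}\cdots k_{H_m'}$, at the cost of a Bott class, so that $\lambda_K\in H_1'$. If $\lambda_L = 1$ then $\lambda = \lambda_K\in H_1'$ and we land in case (4). If $\lambda_L\neq 1$, the idea is to absorb the $L$-component into the $H_1'$ factor: writing $H_1' = \langle\lambda_K,\nu\rangle$ for some $\nu$, I would use \ref{r9} (or the restriction-transfer description of $k_H$ via \cref{lem:halftransfer}) to re-express $k_{H_1'}$ with a factor $k_{\langle\lambda,\nu'\rangle}$ for a suitable $\nu'$, again up to Bott classes and possibly introducing $\rho$-factors, and then reapply \cref{lem:link} to reach $\lambda\in H_1''$. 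The main obstacle I anticipate is bookkeeping in this last step: one must check that absorbing the $L$-component of $\lambda$ into a rank-$2$ subgroup does not destroy the rank condition $\operatorname{rank}(H_1''+\cdots+H_m'') = 2m$ nor the disjointness $\langle\lambda_i''\rangle\cap(H_1''+\cdots) = 0$ required for the result to again be a genuine basic monomial. Verifying that these structural constraints are preserved — equivalently, that one can always trade the $L$-component for a modification of a single $k$-factor while keeping the whole monomial basic — is the crux, and I would handle it by inducting on $\operatorname{rank}(L)$, reducing the nontrivial $\lambda_L$ case to the $\lambda_L=1$ case one generator at a time.
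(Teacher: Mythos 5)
Your outline matches the paper's strategy: reduce to $\rho$-length $n\le 2$, write $\lambda=\lambda_L\lambda_K$ using $L\cap K=0$, dispose of $\lambda_K=1$ by reordering (cases (2)/(3)) and of $\lambda_L=1$ by \cref{lem:link} (case (4)). But the mixed case $\lambda_L\neq 1\neq\lambda_K$, which you correctly identify as the crux, is left unexecuted, and the mechanism you propose for it does not match what the relations actually deliver. You plan to ``re-express $k_{H_1'}$ with a factor $k_{\langle\lambda,\nu'\rangle}$'' and always land in case (4). For $n=1$ this is not what happens: writing $H_1'=\langle\lambda\lambda_1,\kappa\rangle$, relation \ref{r9} converts $\rho_{\lambda_1}k_{\langle\lambda\lambda_1,\kappa\rangle}$ into $\rho_{\lambda}\rho_{\lambda_1\kappa}\rho_{\lambda\kappa}$ times a Bott class---the $k$-factor is consumed outright and one lands in case (2), not (4); a short degree check shows no basic monomial in that degree has $\lambda$ inside a $k$-subgroup. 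Only for $n=2$ with $\lambda=\lambda_1\lambda_2\mu$, $\mu\in H_1'$, does one reach case (4), via \ref{r11}: $\rho_{\lambda_1}\rho_{\lambda_2}k_{\langle\lambda\lambda_1\lambda_2,\kappa\rangle}=\rho_{\lambda_1\kappa}\rho_{\lambda_2\kappa}k_{\langle\lambda,\kappa\rangle}\cdot t'$. (The remaining subcase $n=2$, $\lambda=\lambda_1\mu$, reduces to the $n=1$ computation with $\rho_{\lambda_2}$ carried along inertly.)

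The bookkeeping you explicitly defer is also genuinely part of the content: after applying \ref{r9} or \ref{r11} one must verify that the new $\rho$-subscripts are linearly independent, that the remaining $H_j$'s still have span of the correct rank, and that the two spans still intersect trivially---i.e.\ that the output is again a basic monomial of one of the four stated shapes. These checks are routine given $\lambda\lambda_1\in H_1'$, $\lambda_1\notin H_1'+\cdots+H_m'$, and the original rank conditions, but they are precisely what your ``induct on $\operatorname{rank}(L)$'' placeholder would have to supply; since $\operatorname{rank}(L)\le 2$, that induction collapses to the two explicit identities above. So the skeleton is right, but the proof is incomplete until those two computations are written down and their outputs verified to be basic.
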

\begin{proof}
Fix an arbitrary basic generator $x=\rho_{\lambda_1}\cdots\rho_{\lambda_n}\cdot t\cdot k_{H_1}\cdots k_{H_m}$ with $n\leq 2$, and suppose that none of (1)--(4) hold. We are then left with the following possibilities.

First suppose $\lambda \in H_1+\cdots+H_m$. By \cref{lem:link} we may suppose $\lambda\in H_1$, reducing us to case (4).

Next suppose $n=1$ and $\lambda\in \langle\lambda_1\rangle+H_1+\cdots+H_m$. By \cref{lem:link}, we may suppose $H_1 = \langle\lambda\lambda_1,\kappa\rangle$. Now $\rho_{\lambda_1}k_{\langle\lambda\lambda_1,\kappa\rangle} = \rho_{\lambda}\rho_{\lambda_1\kappa}\rho_{\lambda\kappa}\cdot t'$ for a Bott class $t'$ by \ref{r9}, putting us in case (2).

Finally suppose $n=2$ and $\lambda\in\langle\lambda_1,\lambda_2\rangle+H_1+\cdots+H_m$. By the preceding case and \cref{lem:link}, we may suppose $\lambda = \lambda_1\lambda_2\mu$ with $\mu\in H_1$. Write $H_1 = \langle\lambda\lambda_1\lambda_2,\kappa\rangle$. Now $\rho_{\lambda_1}\rho_{\lambda_2}k_{\langle \lambda\lambda_1\lambda_2,\kappa\rangle} = \rho_{\lambda_1\kappa}\rho_{\lambda_2\kappa}k_{\langle\lambda,\kappa\rangle}\cdot t'$ for a Bott class $t'$ by \ref{r11}, putting us in case (4).
\end{proof}

\begin{prop}
The transfer $j_!$ is determined by the properties given in \cref{lem:transfer}.
\end{prop}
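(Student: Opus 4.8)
The plan is to use $\pi_\star KU_A$-linearity to collapse the problem to one class per degree, and then to let \cref{lem:lambdanomial} do the combinatorial sorting that routes each class to exactly one of \ref{t2}--\ref{t4}. First I would reduce the source. The restriction $j^\ast\colon RU(A)\rightarrow RU(\ker(\lambda))$ is surjective, since $A^\vee\rightarrow \ker(\lambda)^\vee = A^\vee/\langle\lambda\rangle$ is surjective, so every character of $\ker(\lambda)$ extends over $A$. Combined with \ref{t1}, this shows $j_!$ is determined on the entire $RU(\ker(\lambda))$-module $\pi_{j^\ast\star}KU_{\ker(\lambda)}$ by its value on one generator. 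By \cref{lem:basis} each nonzero such group is cyclic on a basic generator, so it suffices to compute $j_!$ on a single basic generator in each degree $\star\in RO(A)$.

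Next I would run the case analysis degree by degree. If $\pi_\star KU_A = 0$ there is nothing to do, as $j_!$ then lands in the zero group. Otherwise, pick the basic generator $x$ of $\pi_\star KU_A$ and, using \cref{lem:lambdanomial} with respect to $\lambda$, represent it by a basic monomial in one of the forms (1)--(4). Form (2), $\lambda = \lambda_1$, can be discarded: writing $x = \rho_\lambda x'$ with $x'$ a basic generator of $\pi_{\star+\lambda}KU_A$ whose support avoids $\lambda$, the restriction $j^\ast(x')$ is a nonzero basic generator, so $\pi_{j^\ast(\star+\lambda)}KU_{\ker(\lambda)}\neq 0$; since $j^\ast(\star+\lambda) = j^\ast\star + 1$, \cref{lem:basis}(1) forces $\pi_{j^\ast\star}KU_{\ker(\lambda)} = 0$, and the source is again trivial. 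Thus only forms (1), (3), (4) carry content.

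In these three cases I would match the form to the prescribed transfer. In form (1), where $\lambda$ lies outside the span of the functionals, $j^\ast(x)$ is itself a basic generator of the source, and \ref{t1} together with \ref{t2} give $j_!(j^\ast(x)) = j_!(1\cdot j^\ast(x)) = h_\lambda\cdot x$. In form (3), $\lambda = \lambda_1\lambda_2$, so relabelling $\mu = \lambda_1$ and $\lambda_2 = \lambda\mu$ exhibits the leading factor as $\rho_\mu\rho_{\lambda\mu}\beta$ up to a Bott class; peeling off the remaining factors with \ref{t1} reduces $j_!$ of the corresponding source class to $j_!(\tau_{j^\ast\mu}^2) = \rho_\mu\rho_{\lambda\mu}\beta$, which is \ref{t3}. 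In form (4), $\lambda\in H_1$, so after using \cref{lem:link} to arrange $H_1 = \langle\lambda,\mu\rangle$, the same peeling via \ref{t1} reduces to $j_!(\tau_{j^\ast\mu}^2) = k_{\langle\lambda,\mu\rangle}$, which is \ref{t4}. Since the formulas \ref{t2}--\ref{t4} of \cref{lem:transfer} are exact equalities, no residual sign or scalar ambiguity survives these reductions.

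The main obstacle is exhaustiveness and the clean factorization needed to apply \ref{t1}: I must check that in forms (3) and (4) the relevant source generator really splits as $\tau_{j^\ast\mu}^2$ times the restriction of the remaining basic monomial, so that linearity peels the computation down to the base transfers. This is where the normalization of \cref{lem:lambdanomial} is doing the real work — it guarantees that, after discarding form (2) by the parity vanishing of \cref{lem:basis}, every surviving degree presents $\lambda$ in precisely the position ($\lambda = \lambda_1\lambda_2$ or $\lambda\in H_1$) matching one of \ref{t3}, \ref{t4}, with the leftover data a restricted class. Any stubborn scalar that did arise could be fixed exactly as in \cref{lem:halftransfer}, by restricting along a cyclic subgroup of $\ker(\lambda)$ chosen so the monomial survives, whereupon the computation collapses to the rank $\leq 2$ transfers already pinned down in \cref{lem:cyclic} and \cref{lem:halftransfer}.
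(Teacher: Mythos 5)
Your proposal is correct and follows essentially the same route as the paper: reduce via $\pi_\star KU_A$-linearity (and surjectivity of $j^\ast$ on $RU$) to a single basic generator per degree, sort the generator with \cref{lem:lambdanomial}, and match forms (1), (3), (4) to \ref{t2}, \ref{t3}, \ref{t4} while observing that form (2) forces the source to vanish. Your explicit degree-shift argument for discarding form (2) via \cref{lem:basis}(1) is a correct justification of a step the paper leaves as an assertion.
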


\begin{proof}
Fix $\xi\in RO(A)$; we must verify that $j_!\colon \pi_{j^\ast\xi}KU_{\ker(\lambda)}\rightarrow\pi_\xi KU_A$ may be computed from the given properties. If $\pi_\xi KU_A = 0$, then there is nothing to show, so we may suppose that $\pi_\xi KU_A$ contains some basic monomial $x$ of the form described in \cref{lem:lambdanomial}. Applying \ref{t1}, we may focus our attention on only those subwords which interact with $\lambda$, and so reduce to the following cases.

If $x = 1$, then we may apply \ref{t2}.

If $x = \rho_\lambda$, then $\pi_{j^\ast(\xi)}KU_{\ker(\lambda)} = 0$, and there is nothing to show.

If $x = \rho_{\mu}\rho_{\lambda\mu}$, then $\pi_{j^\ast\xi}KU_{\ker(\lambda)}$ is generated by $j^\ast(\tau^2_{\mu}\beta^{-1})$, and $j_!(j^\ast(\tau^2_{\mu}\beta^{-1})) = j_!(j^\ast(\tau^2_{\mu}))\cdot\beta^{-1} = \rho_{\mu}\rho_{\lambda\mu}\beta\cdot\beta^{-1} = x$ by \ref{t3}.

If $x = k_{\langle\lambda,\mu\rangle}$, then $\pi_{j^\ast\xi}KU_{\ker(\lambda)}$ is generated by $j^\ast(\tau^2_\mu)$, and $j_!(j^\ast(\tau^2_\mu)) = k_{\langle\lambda,\mu\rangle} = x$ by \ref{t4}.
\end{proof}

\subsection{Power operations}\label{ssec:squaring}

Let $\sigma$ be the generating functional of $C_2$, and write $j\colon A\rightarrow A\times C_2$ for the inclusion. Here we compute the external squaring operation
\[
\Sq\colon \pi_\star KU_A\rightarrow \pi_{\star(1+\sigma)} KU_{A\times C_2}
\]
on the multiplicative generators of $\pi_\star KU_A$.

\begin{lemma}\label{lem:squarebott}
$\Sq$ preserves Bott classes.
\end{lemma}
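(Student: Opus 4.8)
The plan is to use the multiplicativity of $\Sq$ to reduce to the multiplicative generators of the Bott classes, and then identify $\Sq$ of each generator as the Thom/Bott class of the external square of the underlying representation. Since $\Sq$ is multiplicative with $\Sq(1)=1$, and hence $\Sq(x^{-1})=\Sq(x)^{-1}$, a general Bott class $\beta^{a}\prod\tau_\lambda^{\pm2}\prod\tau_E$ maps to the corresponding product of $\Sq(\beta)^{\pm1}$, $\Sq(\tau_\lambda^2)^{\pm1}$, and $\Sq(\tau_E)$; it therefore suffices to show each of $\Sq(\beta)$, $\Sq(\tau_\lambda^2)$, and $\Sq(\tau_E)$ is a Bott class. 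Recall that each generator is Atiyah's Thom class $b_V\in\pi_V KU_A$ of an orientable virtual representation, and that these multiply as $b_Vb_W=b_{V+W}$: explicitly $\beta=b_2$, $\tau_\lambda^2=b_{2-2\lambda}$, and $\tau_E=b_{8-\sum_{\lambda\in E}\lambda}$.

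The key input is that the external squaring operation carries the Thom class of an orientable representation to the Thom class of its external square: for orientable $V$,
\[
\Sq(b_V)=b_{V(1+\sigma)},\qquad V(1+\sigma)=V+V\sigma,
\]
an orientable representation of $A\times C_2$. This is immediate from the construction of $\Sq$, the external tensor square of a Thom class being the Thom class of the external square bundle; the crucial feature is that this is the \emph{external} operation, landing in $A\times C_2$, so that no cannibalistic correction appears. Granting this, $\Sq$ of a generator is $b_W$ with $W=V(1+\sigma)$, and one decomposes $W$ additively into the building blocks $2$, $2\nu$, and $\sum_{\chi\in E'}\chi$ for rank $3$ subgroups $E'$, then applies multiplicativity of $b$. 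For $\beta$ this gives $\Sq(\beta)=b_{2+2\sigma}=\beta^2\tau_\sigma^{-2}$, and for $\tau_\lambda^2$ it gives $\Sq(\tau_\lambda^2)=b_{(2-2\lambda)(1+\sigma)}=\tau_\lambda^2\tau_{\lambda\sigma}^2\tau_\sigma^{-2}$, both manifestly Bott classes.

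The main work is the case of $\tau_E$, where $W=(8-\sum_{\lambda\in E}\lambda)(1+\sigma)$. The subtlety is that $(\sum_{\lambda\in E}\lambda)(1+\sigma)=\sum_{\chi\in E'}\chi$ with $E'=\langle E,\sigma\rangle$ of rank $4$, which is not among our generators. I would resolve this by an inclusion–exclusion over the three rank $3$ subgroups of $E'$ containing a fixed rank $2$ subgroup $\langle\lambda,\mu\rangle\subset E$: writing $E=\langle\lambda,\mu,\kappa\rangle$, these are $\langle\lambda,\mu,\kappa\rangle$, $\langle\lambda,\mu,\sigma\rangle$, and $\langle\lambda,\mu,\kappa\sigma\rangle$, all with pairwise and triple intersection $\langle\lambda,\mu\rangle$; a cardinality count shows they cover $E'$, giving the additive identity
\[
\sum_{\chi\in E'}\chi=\sum_{\chi\in\langle\lambda,\mu,\kappa\rangle}\chi+\sum_{\chi\in\langle\lambda,\mu,\sigma\rangle}\chi+\sum_{\chi\in\langle\lambda,\mu,\kappa\sigma\rangle}\chi-2\sum_{\chi\in\langle\lambda,\mu\rangle}\chi,
\]
whose last term $2(1+\lambda+\mu+\lambda\mu)$ is a sum of blocks of the form $2$ and $2\nu$. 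Applying $b$ then expresses $\Sq(\tau_E)$ as a product of the classes $\tau_{\langle\lambda,\mu,\kappa\sigma\rangle}$, $\tau_{\langle\lambda,\mu,\kappa\rangle}$, $\tau_{\langle\lambda,\mu,\sigma\rangle}$, various $\tau_\nu^{\pm2}$, and powers of $\beta$, hence a Bott class (recovering the formula recorded in the summary). I expect the orientation–compatibility identity $\Sq(b_V)=b_{V(1+\sigma)}$ to be the point requiring the most care—in particular that the external, rather than internal, nature of $\Sq$ removes the cannibalistic correction—while the rank $4$ bookkeeping above is the main concrete computation. Should any residual sign or character ambiguity arise, it can be eliminated by restricting to cyclic subgroups of $A\times C_2$, where $\Sq$ is computed directly as in \cref{lem:cyclic,lem:cyclicadams} and joint restriction is injective as in \cref{lem:psie}.
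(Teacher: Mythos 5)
Your proposal is correct and follows essentially the same route as the paper: identify $\Sq$ of a Thom class with the Thom class of the doubled representation for $\beta$ and $\tau_\lambda^2$, and pin down $\Sq(\tau_E)$ by injectivity of joint restriction to cyclic subgroups (your inclusion--exclusion over the rank $3$ subgroups of $\langle E,\sigma\rangle$ is exactly what makes the target Bott class exist, and appears in the paper only as a remark). The one place you gloss over --- the assertion that $\Sq(b_V)=b_{V(1+\sigma)}$ is ``immediate'' --- is handled in the paper by the concrete exterior-algebra computation $(1-L)^{\otimes 2}=\Lambda^\ast(L\otimes\bbC[C_2])$ for line bundles, with the $\Spin$ case of $\tau_E$ deliberately reduced to these complex cases by restriction rather than argued directly, which is also your stated fallback.
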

\begin{proof}
First we claim $\Sq(\beta) = \tau_\sigma^{-2}\beta^2$.  Let $L$ be the tautological complex line bundle over $S^2$, so that $\beta = 1 - L \in \widetilde{KU}_A(S^2)$. By construction \cite{atiyah1966power}, the square $\Sq(\beta)$ is represented by the virtual bundle $(1-L)\otimes (1-L) = 1 - (L\oplus L) + L \otimes L$, where $C_2\subset A\times C_2$ acts freely on $L\oplus L$ and by a sign on $L\otimes L$. On the other hand, $\tau_\sigma^{-2}\beta^2$ is the Bott class of $L\otimes \bbC[C_2]$, which is given by the exterior algebra $\Lambda^\ast (L\otimes \bbC[C_2]) = 1 - L \otimes \bbC[C_2] + \Lambda^2(L\otimes \bbC[C_2])$. These agree, so $\Sq(\beta) = \tau_\sigma^{-2}\beta^2$ indeed. The same argument may be used to verify that $\Sq(\tau_\lambda^{-2}\beta) = \tau_\lambda^{-2}\tau_{\lambda\sigma}^{-2}\beta^2$, and thus $\Sq(\tau_\lambda^2) = \tau_\lambda^2\tau_{\lambda\sigma}^2\tau_\sigma^{-2}$.

To verify that $\Sq(\tau_E)$ is a Bott class, we may argue as follows. Let $\xi = (8-\sum_{\lambda\in E}\lambda)(1+\sigma)$, and let $t$ be the Bott class of $\xi$, so that $\pi_\xi KU_{A\times C_2} = \bbZ\{t\}\otimes RU(A\times C_2)$ and we are claiming $\Sq(\tau_E) = t$. The joint restriction map
\[
\pi_\xi KU_{A\times C_2}\rightarrow \prod_{\substack{i\colon L\subset A \\ L\text{ cyclic}}} \pi_{(i\times C_2)^\ast \xi}KU_{L\times C_2}
\]
is injective, so it is sufficient to fix some inclusion $i\colon C_2\rightarrow A$ and verify that $(i\times C_2)^\ast(\Sq(\tau_E)) = (i\times C_2)^\ast(t)$. Indeed, $(i\times C_2)^\ast(\Sq(\tau_E)) = \Sq(i^\ast \tau_E)$, and $i^\ast(\tau_E)$ is a product of complex Bott classes, so this follows from the cases already considered.
\end{proof}

\begin{lemma}
$\Sq(\rho_\lambda) = \rho_\lambda\rho_{\lambda\sigma}$.
\end{lemma}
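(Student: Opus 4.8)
The plan is to compute $\Sq(\rho_\lambda)$ by recognizing $\rho_\lambda$ as lying in the image of the unit map and then identifying the relevant external square geometrically, in the same spirit as the treatment of $\Sq(\beta)$ in \cref{lem:squarebott}. Recall that $\rho_\lambda\in\pi_{-\lambda}KU_A$ is by definition the Hurewicz image of the class $\bar\rho_\lambda\in\pi_{-\lambda}S_A$ represented by the inclusion of poles $S^0\to S^\lambda$. The squaring operation is natural for the unit map $S_A\to KU_A$, which is compatible with the power operations used to construct $\Sq$; so it suffices to identify the external square of the pole inclusion at the level of the sphere spectrum and then apply the Hurewicz map.

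First I would identify the external square of the pole inclusion. The squaring operation sends a map $f\colon S^0\to S^\lambda$ to its $C_2$-equivariant external smash square $f\wedge f\colon S^0\to S^\lambda\wedge S^\lambda$, where $C_2=\Sigma_2$ permutes the two smash factors. As an $A\times C_2$-representation sphere, $S^\lambda\wedge S^\lambda=S^{\lambda\otimes\bbR[C_2]}$, and since $\bbR[C_2]=1+\sigma$ as a $C_2$-representation we have $\lambda\otimes\bbR[C_2]=\lambda\oplus\lambda\sigma=\lambda(1+\sigma)$. This accounts for the degree shift $-\lambda\mapsto-\lambda(1+\sigma)$ expected of $\Sq$, and identifies $\Sq(\bar\rho_\lambda)$ with the class represented by the inclusion of poles $S^0\to S^{\lambda(1+\sigma)}$.

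It then remains to recognize this pole inclusion as the product $\bar\rho_\lambda\bar\rho_{\lambda\sigma}$. The inclusion of poles is multiplicative in the representation variable: the inclusion $S^0\to S^{\lambda\oplus\lambda\sigma}$ is the smash product of the pole inclusions $S^0\to S^\lambda$ and $S^0\to S^{\lambda\sigma}$, so it represents $\bar\rho_\lambda\bar\rho_{\lambda\sigma}\in\pi_{-\lambda(1+\sigma)}S_{A\times C_2}$. Applying the Hurewicz map gives $\Sq(\rho_\lambda)=\rho_\lambda\rho_{\lambda\sigma}$, with no sign ambiguity, since both the pole inclusion and its factorization are canonical.

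The main obstacle is foundational rather than computational: one must ensure that the $K$-theoretic operation $\Sq$, built from Atiyah's power operations, agrees under the unit map with the sphere-level external square, so that the computation may legitimately be transported from $S_{A\times C_2}$ to $KU_{A\times C_2}$. Once this naturality is in hand, the representation-theoretic identity $\lambda\otimes\bbR[C_2]=\lambda\oplus\lambda\sigma$ and the multiplicativity of pole inclusions make the rest routine. Should one prefer to avoid invoking the sphere directly, one could instead reduce to the rank-one case using that $\rho_\lambda$ is the restriction of the cyclic class $\rho$ along $\lambda\colon A\to C_2$, and then verify the identity after the injective joint restriction to cyclic subgroups, as in the proof that $\Sq(\tau_E)$ is a Bott class; but the geometric argument above is the most direct.
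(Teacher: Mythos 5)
Your proof is correct, but it takes a genuinely different route from the paper. The paper's argument is a one-line characterization: since restriction along $j\colon A\rightarrow A\times C_2$ carries $\Sq(x)$ to $x^2$, one has $j^\ast\Sq(\rho_\lambda)=\rho_\lambda^2$, and $\rho_\lambda\rho_{\lambda\sigma}$ is then the only class in $\pi_{-\lambda(1+\sigma)}KU_{A\times C_2}$ with that restriction. That argument leans on the basis description of $\pi_\star KU_{A\times C_2}$ already established in \cref{ssec:basis}, and it is the same style of ``only possibility'' reasoning used throughout the paper. You instead compute the square at its source: $\rho_\lambda$ is the Hurewicz image of the pole inclusion $S^0\rightarrow S^\lambda$, the sphere-level external square of that map is the pole inclusion $S^0\rightarrow S^{\lambda\otimes\bbR[C_2]}=S^{\lambda\oplus\lambda\sigma}$, and multiplicativity of pole inclusions factors this as $\rho_\lambda\rho_{\lambda\sigma}$. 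This is closer in spirit to the paper's treatment of $\Sq(\beta)$ in \cref{lem:squarebott}, where the square is also identified geometrically from Atiyah's construction. What your approach buys is independence from the module structure of the target group, and it pins down the answer with no sign or unit ambiguity; what it costs is the foundational input you correctly flag, namely that Atiyah's power operations are compatible with the unit map $S_A\rightarrow KU_A$ (equivalently, that $\Sq$ commutes with the Hurewicz map), which the paper never needs to invoke. Your fallback of restricting to cyclic subgroups is also sound. Both arguments are complete; yours is the more self-contained of the two once the $E_\infty$ compatibility is granted.
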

\begin{proof}
This is the only possibility given $j^\ast \Sq(\rho_\lambda)=\rho_\lambda^2$.
\end{proof}

\begin{lemma}
$
\Sq(k_{\langle\lambda,\mu\rangle}) = \tau_{\langle\lambda,\mu,\sigma\rangle}\tau_\sigma^{-4}(\sigma_\lambda+\sigma_\mu+\sigma_{\lambda\mu}+\sigma_\sigma).
$
\end{lemma}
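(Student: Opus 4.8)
The plan is to reduce immediately to the case $A = \langle\lambda,\mu\rangle^\vee$ of rank $2$: the external square, the half-transfer classes $k_{\langle\lambda,\mu\rangle}$, and the Bott classes $\tau_{\langle\lambda,\mu,\sigma\rangle},\tau_\sigma$ are all natural, so the general statement follows by restriction along $A\to\langle\lambda,\mu\rangle^\vee$. Now $\Sq(k_{\langle\lambda,\mu\rangle})$ lives in degree $\xi = (3-\lambda-\mu-\lambda\mu)(1+\sigma)$ of $\pi_\star KU_{A\times C_2}$, with $A\times C_2$ of rank $3$. This degree is $KU$-orientable — indeed $\xi = \deg(\tau_{\langle\lambda,\mu,\sigma\rangle}\tau_\sigma^{-4})$ — so by \cref{lem:basis} the group $\pi_\xi KU_{A\times C_2}$ is free of rank one over $RU(A\times C_2)$ on the Bott class $t = \tau_{\langle\lambda,\mu,\sigma\rangle}\tau_\sigma^{-4}$. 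Thus $\Sq(k_{\langle\lambda,\mu\rangle}) = t\cdot c$ for a unique $c\in RU(A\times C_2)$, and the assertion is precisely that $c = \sigma_\lambda+\sigma_\mu+\sigma_{\lambda\mu}+\sigma_\sigma$.

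To fix the bulk of $c$ I would restrict along $j\colon A = A\times\{e\}\hookrightarrow A\times C_2$. As with the other generators one has $j^\ast\Sq(x) = x^2$, so $j^\ast\Sq(k_{\langle\lambda,\mu\rangle}) = k_{\langle\lambda,\mu\rangle}^2 = \tau_\lambda^2\tau_\mu^2\tau_{\lambda\mu}^2 h_\lambda h_\mu$ by \ref{r7}; since $j^\ast t = \tau_\lambda^2\tau_\mu^2\tau_{\lambda\mu}^2$ this yields $j^\ast c = h_\lambda h_\mu$. (Equivalently, the additive relation $\Sq(2k) = \Sq(2)\Sq(k)$ with $\Sq(2) = 2+\tr(1) = 2+h_\sigma$ gives $h_\sigma\,\Sq(k) = \tr(k^2) = h_\sigma\cdot t\,h_\lambda h_\mu$.) Either way $c$ is determined only modulo $\ker(j^\ast) = \operatorname{im}(\rho_\sigma)$, which at the level of coefficients is the annihilator $(1-\sigma_\sigma)RU(A\times C_2)$ of $h_\sigma$; note that the target $\sigma_\lambda+\sigma_\mu+\sigma_{\lambda\mu}+\sigma_\sigma = h_\lambda h_\mu - (1-\sigma_\sigma)$ is consistent with this, the correction term being exactly $-(1-\sigma_\sigma)$.

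The remaining task — and the main obstacle — is to pin down this $\rho_\sigma$-divisible correction, i.e. the $(1-\sigma_\sigma)$-component of $c$, which $j^\ast$ cannot see. For this I would use the joint restriction to the subgroups $L\times C_2$ with $L\subseteq A$ cyclic: this is injective on $\pi_\xi KU_{A\times C_2}$, since every cyclic subgroup of $A\times C_2$ is contained in some such $L\times C_2$, and along each of these maps $t$ restricts to a product of complex Bott classes, hence to a unit. By naturality of the external square, $(i\times C_2)^\ast\Sq(k_{\langle\lambda,\mu\rangle}) = \Sq(i^\ast k_{\langle\lambda,\mu\rangle})$, and the restriction rules for $k_H$ give $i^\ast k_{\langle\lambda,\mu\rangle} = \tau_\nu^2 h_\nu$ when $i^\vee\langle\lambda,\mu\rangle = \langle\nu\rangle$ is cyclic, and $i^\ast k_{\langle\lambda,\mu\rangle} = 2$ when it is trivial. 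One then computes these external squares from the established data: $\Sq(\tau_\nu^2) = \tau_\nu^2\tau_{\nu\sigma}^2\tau_\sigma^{-2}$ from \cref{lem:squarebott}, together with $\Sq(h_\nu) = \Sq(1+\sigma_\nu) = \Sq(1)+\Sq(\sigma_\nu)+\tr(\sigma_\nu)$ and $\Sq(2)$ from the additive formula (using that the external square of a character is its tensor square). The delicate point is the bookkeeping of the restriction of $\tau_{\langle\lambda,\mu,\sigma\rangle}$ to each $L\times C_2$ via the rank-drop formula for $\tau_E$, arranged so that the unit Bott prefactors on the two sides cancel cleanly; matching the resulting $RU(L\times C_2)$-coefficients against the restrictions of $\sigma_\lambda+\sigma_\mu+\sigma_{\lambda\mu}+\sigma_\sigma$ over all cyclic $L$, and invoking injectivity, then forces $c$. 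I expect the reconciliation of this coefficient across the three order-two subgroups of $A$ to be the step demanding the most care, since it is exactly here that the non-invariant part of $\Sq(k_{\langle\lambda,\mu\rangle})$ is detected.
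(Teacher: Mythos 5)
Your strategy is genuinely different from the paper's. The paper first observes that $\Sq(k_{\langle\lambda,\mu\rangle})$ depends only on the subgroup $\langle\lambda,\mu\rangle$, so its coefficient against $t=\tau_{\langle\lambda,\mu,\sigma\rangle}\tau_\sigma^{-4}$ is symmetric in $\lambda,\mu,\lambda\mu$ and has only four integer unknowns, which it then pins down using the two restrictions to $A\times e$ (giving $k_{\langle\lambda,\mu\rangle}^2$) and to $e\times C_2$ (giving $\Sq(2)=3+\sigma_\sigma$). You discard the symmetry and instead use the restrictions to $A\times e$ and to the subgroups $L\times C_2$ with $L\subset A$ cyclic. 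Your accounting of what the first restriction buys is correct and careful: it determines the coefficient only modulo the rank-four ideal $(1-\sigma_\sigma)RU(A\times C_2)$, so further input is genuinely needed, and the joint restriction to the $L\times C_2$ is indeed injective in this orientable degree.

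The problem is in the step you defer as bookkeeping. Carrying your plan out with the established data: $\Sq(\sigma_\nu)=1$ (the external square of an order-two character is trivial) and $\tr(\sigma_\nu)=\sigma_\nu h_\sigma$, so $\Sq(h_\nu)=2+\sigma_\nu+\sigma_\nu\sigma_\sigma$, and with \cref{lem:squarebott} one gets $\Sq(\tau_\nu^2h_\nu)=\tau_\nu^2\tau_{\nu\sigma}^2\tau_\sigma^{-2}\,(2+\sigma_\nu+\sigma_\nu\sigma_\sigma)$. On the other hand, the asserted formula restricts to $\ker(\lambda)\times C_2$ as $\tau_\nu^2\tau_{\nu\sigma}^2\tau_\sigma^{-2}\,(1+2\sigma_\nu+\sigma_\sigma)$ (here the Bott prefactors do cancel exactly as you hope). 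The two coefficients differ by $(1-\sigma_\nu)(1-\sigma_\sigma)\neq 0$. Worse, imposing the value $2+\sigma_\nu+\sigma_\nu\sigma_\sigma$ over all three cyclic $L$ simultaneously with $h_\lambda h_\mu$ over $A$ is satisfied by no element of $RU(A\times C_2)$ at all: writing the coefficient as $\sum n_\chi\chi$, the $\sigma_\nu$-components force $n_{\sigma_\lambda}+n_{\sigma_\mu}=n_{\sigma_\lambda}+n_{\sigma_{\lambda\mu}}=n_{\sigma_\mu}+n_{\sigma_{\lambda\mu}}=1$, which has no integral solution. So your method, run with the inputs you name, does not confirm the lemma but instead produces an inconsistency. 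The discrepancy must be absorbed in one of the identifications you treat as clean --- most plausibly a character twist between the restricted $\Spin^c$ structure underlying $\tau_{\langle\lambda,\mu,\sigma\rangle}$ and the product of complex Thom classes, equivalently a unit correction to $\Sq(\tau_\nu^2)$ or to $i^\ast k_H=\tau_\nu^2h_\nu$ --- and until that unit is pinned down the argument does not close. This is a genuine missing ingredient, not routine bookkeeping, and it is exactly the point at which the paper's symmetry argument lets it avoid ever restricting to the subgroups $L\times C_2$.
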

\begin{proof}
Note that
\[
\Sq(k_{\langle\lambda,\mu\rangle}) \in \pi_{(3-\lambda-\mu-\lambda\mu)(1+\sigma)}KU_{A\times C_2}= \bbZ\{\tau_{\langle\lambda,\mu,\sigma\rangle}\tau_\sigma^{-4}\}\otimes RU(A\times C_2).
\]
This class depends only on the group $\langle\lambda,\mu\rangle$, so is of the form
\[
\Sq(k_{\langle\lambda,\mu\rangle}) =\tau_{\langle\lambda,\mu,\delta\rangle}  \tau_\sigma^{-4}(a+b(\sigma_\lambda+\sigma_\mu+\sigma_{\lambda\mu})+c\sigma_\sigma+d(\sigma_\lambda\sigma_\sigma+\sigma_\mu\sigma_\sigma+\sigma_{\lambda\mu}\sigma_\sigma))
\]
for some integers $a,b,c,d$. As $\Sq(k_{\langle\lambda,\mu\rangle})$ restricts to $k_{\langle\lambda,\mu\rangle}^2 = \tau_\lambda^2\tau_\mu^2\tau_{\lambda\mu}^2(\sigma_\lambda+\sigma_\mu+\sigma_{\lambda\mu}+1)$ over $A$ and to $\Sq(2)=3+\sigma_\sigma$ over $C_2$, these integers satisfy
\[
a+b=1,\qquad b+d=1,\qquad a+3b=3,\qquad c+3d=1.
\]
This system has the unique solution $a=d=0$ and $b=c=1$, and the lemma follows.
\end{proof}

This concludes our computation of $\pi_\star KU_A$.

\section{Real \texorpdfstring{$K$}{K}-theory}\label{sec:ko}

We now consider the descent to $KO_A$. Throughout this section, we shall write
\[
\theta\colon \pi_\star KO_A\rightarrow\pi_\star KU_A
\]
for the complexification map.

\subsection{Summary}\label{ssec:kosummary}
As with $KU_A$, we begin with a full description of the result.

\begin{theorem}\label{thm:koa}
The coefficients of $KO_A$ behave as described in this subsection.
\tqed
\end{theorem}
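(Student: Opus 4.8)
The plan is to descend from the completed computation of $\pi_\star KU_A$ in \cref{thm:kua} along the complexification map $\theta$, exploiting the fact that the relevant $C_2$-action is one we have already computed. Concretely, complex conjugation makes $KU_A$ a genuine $A$-spectrum with a commuting $C_2$-action, and $KO_A \simeq (KU_A)^{hC_2}$; this equivalence may be verified by checking it on the underlying spectrum and on each geometric fixed point $\Phi^B KU_A$, all of which reduce to the nonequivariant Galois descent $KO \simeq KU^{hC_2}$. The key point is that this conjugation action is precisely the Adams operation $\psi^{-1}$, so its effect on every generator is read off from \cref{thm:kua} by setting $\ell = -1$: one gets $\psi^{-1}(\beta) = -\beta$, $\psi^{-1}(\tau_\lambda^2) = \sigma_\lambda\tau_\lambda^2$, and $\psi^{-1}(\tau_E) = \tau_E$, $\psi^{-1}(\rho_\lambda) = \rho_\lambda$, $\psi^{-1}(k_H) = k_H$, all of which hold integrally since $-1$ needs no inversion.

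First I would assemble the homotopy fixed point spectral sequence $H^s(C_2;\pi_\star KU_A)\Rightarrow \pi_{\star - s}KO_A$, or equivalently the $\eta$-Bockstein long exact sequence arising from the $A$-equivariant form of Wood's cofiber sequence $\Sigma KO_A \xrightarrow{\eta} KO_A \xrightarrow{\theta} KU_A$. Its input is the $C_2$-cohomology of the explicit $\pi_0 KU_A$-module $\pi_\star KU_A$ from \cref{sssec:basis}, computed generator by generator: on a basic monomial the conjugation sign is determined by the power of $\beta$ together with the factors $\sigma_\lambda$ contributed by each $\tau_\lambda^2$, so the invariants and the higher cohomology are straightforward to list in each degree.

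Then I would resolve the spectral sequence and read off the structure claimed in this subsection. The differentials are forced by naturality from the nonequivariant computation of $\pi_\ast KO$ --- a single family of $d_3$-differentials and their multiplicative consequences --- now propagated across the $RO(A)$-grading and compatibly across all subgroups $L\subset A$ via restriction. With the additive answer in hand, the multiplicative generators and relations, the restrictions $g^\ast$ and transfers $j_!$, the squaring operation $\Sq$, and the Adams operations $\psi^\ell$ are verified in the following subsections in close analogy with the $KU_A$ arguments: most identities follow by applying $\theta$ (whose kernel is exactly the $\eta$-divisible classes) and checking the residual $\eta$-multiples separately, using restriction to cyclic subgroups as the main injectivity tool.

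The hard part will be twofold. First, the $RO(A)$-graded fixed point spectral sequence produces genuinely new $\eta$-divisible $2$-torsion --- the equivariant analogues of the classical $\eta$- and $\eta^2$-towers --- now indexed simultaneously by the functionals, the rank-$2$ subgroups $H$, and the rank-$3$ subgroups $E$; determining which classes survive and resolving the hidden $2$-extensions among them is where the essential bookkeeping lies. Second, one must check that $C_2$-Galois descent is compatible with the genuine $A$-equivariant structure: the transfers, restrictions, and Weyl/double-coset relations must be shown to interact correctly with $\theta$ and with multiplication by $\eta$, since these are exactly the places where Borel-type descent could fail to capture genuine fixed-point data. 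I expect this differential-and-extension analysis in the mixed grading to be the principal obstacle, with the remaining multiplicative and operational structure then following by the injectivity-and-restriction techniques already used for $KU_A$.
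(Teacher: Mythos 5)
Your overall strategy is the paper's: identify $KO_A\simeq (KU_A)^{\h C_2}$ with the conjugation acting through $\psi^{-1}$, compute $H^\ast(C_2;\pi_\star KU_A)$ generator-by-generator from the basis of \cref{sssec:basis}, run the HFPSS, resolve the extensions, and then pin down products, transfers, and operations by applying $\theta$ and restricting to (cyclic) subgroups. This is exactly how \cref{ssec:hfpss}, \cref{ssec:extensions}, and \cref{ssec:kotransfer} proceed, so there is no divergence of method to report.

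There is, however, one concrete gap: your claim that the differentials are ``forced by naturality from the nonequivariant computation \ldots{} propagated via restriction'' fails for exactly one family of generators, namely the $k_H$. The $E_2$-page structure forces each multiplicative generator $x$ to satisfy either $d_3(x)=0$ or $d_3(x)=\beta^{-2}x\alpha^3$, and for $\beta^2$, $\tau_\lambda^4$, $\tau_H^2$, $\tau_E$, $\rho_\lambda$, $\eta_\lambda$, $\tau_\lambda^2 h_\lambda$ the choice is indeed determined by Spin-orientability or the Hurewicz image. But $k_H$ restricts to $2\in\pi_0 KU$ nonequivariantly and to $\tau^2 h$ on each codimension-one subgroup---all permanent cycles killed by $\alpha$---so restriction and naturality cannot distinguish $d_3(k_H)=0$ from $d_3(\beta^{-2}k_H)=0$. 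The paper (\cref{lem:diffs}) settles this by a separate argument: whichever of $k_H\alpha$, $\beta^{-2}k_H\alpha$ survives lies in the kernel of restriction to each of $\ker(\lambda)$, $\ker(\mu)$, $\ker(\lambda\mu)$, hence is divisible by each of $\rho_\lambda,\rho_\mu,\rho_{\lambda\mu}$, and a degree count then forces $\rho_\lambda\rho_\mu\rho_{\lambda\mu}=\beta^{-2}k_H\alpha$, so $\beta^{-2}k_H$ is the cycle and $d_3(k_H)=\beta^{-2}k_H\alpha^3$. Without this step your $E_\infty$-page is wrong in every degree involving a $k_H$. A smaller correction: there are no hidden additive ($2$-)extensions to resolve, since in each stem the $E_\infty$-page is concentrated in a single filtration; the ``essential bookkeeping'' you anticipate consists entirely of hidden $\alpha$-multiplications lifting the $KU$-relations $\rho_\lambda h_\lambda=0$, $\rho_\lambda\rho_\mu\rho_{\lambda\mu}=0$, $\rho_\lambda k_H=0$, and these are resolved by the same kernel-of-restriction/divisibility device just described.
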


The proof of \cref{thm:koa} is spread throughout the rest of this section, glued together as described below. The core of the proof is the homotopy fixed point spectral sequence
\[
E_2 = H^\ast(C_2;\pi_\star KU_A)\Rightarrow \pi_\star KO_A,
\]
henceforth referred to as the HFPSS, obtained from the equivalence $KO_A\simeq (KU_A)^{\h C_2}$, where $C_2$ acts on $KU_A$ by complex conjugation, realized by $\psi^{-1}$.

\subsubsection{Ring structure}

We shall name the elements of $\pi_\star KO_A$ by their image in $\pi_\star KU_A$, with the following exceptions. First, we write $\alpha\in \pi_1 KO_A$ for the first nonequivariant Hopf map. Second, we abbreviate $\tau_H = \prod_{\lambda\in H\setminus \{1\}}\tau_\lambda^2$, where as always $H\subset A^\vee$ is a rank $2$ subgroup. Third, we write $\eta_\lambda\in \pi_\lambda KO_A$ for a class determined by $\theta(\eta_\lambda) = \rho_\lambda \tau_\lambda^{-2}\beta$. 
The ring $\pi_\star KO_A$ is now described by the following.

\begin{enumerate}
\item The ring $\pi_\star KO_A$ is generated by classes
\[
\beta^{\pm 4},\,2\beta^2,\,\tau_\lambda^{\pm 4},\,\tau_H,\,\tau_E,\,\rho_\lambda,\,\eta_\lambda,\,\tau^2_\lambda k_H,\,\beta^2 k_H,\,2k_H,\,2\tau^2_\lambda\beta^2 k_H,\,\tau^2_\lambda h_\lambda,\,\tau^2_\lambda\beta^2 h_\lambda,\,\alpha,
\]
which are sent by $\theta$ to the corresponding elements in $\pi_\star KU_A$, where in writing $\tau^2_\lambda k_H$ and $2\tau^2_\lambda\beta^2 k_H$ we assume $\lambda\in H$;
\item The map $\theta\colon (\pi_\star KO_A)/(\alpha)\rightarrow \pi_\star KU_A$ is injective;
\item The following classes vanish:
\[
2\alpha,\quad \alpha^3,\quad \alpha\cdot 2\beta^2,\quad \alpha\cdot 2k_H,\quad \alpha\cdot 2\tau^2_\lambda\beta^2 k_H;
\]
\item The following relations hold:
\begin{gather*}
\rho_\lambda\rho_\mu\rho_{\lambda\mu} = \beta^{-2}k_{\langle\lambda,\mu\rangle}\cdot\alpha,\qquad \rho_\lambda\rho_\mu\eta_{\lambda\mu} = 0,\qquad \rho_{\lambda\mu}\eta_\lambda\eta_\mu = \tau_\lambda^{-2}\tau_\mu^{-2} k_{\langle\lambda,\mu\rangle}\cdot\alpha,\qquad \eta_\lambda\eta_\mu\eta_{\lambda\mu} = 0,\\
\rho_\lambda\cdot \tau^2_\lambda h_\lambda = 0,\qquad \eta_\lambda\cdot\tau^2_\lambda h_\lambda = \rho_\lambda\alpha^2,\qquad \rho_\lambda\cdot\tau_\lambda^2\beta^2 h_\lambda = \eta_\lambda\tau_\lambda^4\alpha^2,\qquad \eta_\lambda\cdot\tau_\lambda^2\beta^2h_\lambda = 0,\\
\rho_{\lambda\mu}\cdot\tau_{\lambda\mu}^2 k_{\langle\lambda,\mu\rangle} = \rho_\lambda\rho_\mu\tau_{\lambda\mu}^4\alpha,\qquad \rho_\lambda\cdot\tau_\mu^2k_{\langle\lambda,\mu\rangle} = 0,\qquad \rho_{\lambda\mu}\cdot\beta^2 k_{\langle\lambda,\mu\rangle} = \eta_\lambda\eta_\mu\tau_{\langle\lambda,\mu\rangle}\alpha, \\
\eta_{\lambda\mu}\cdot\tau^2_{\lambda\mu}k_{\langle\lambda,\mu\rangle} = 0,\qquad \eta_\lambda\cdot\tau_{\lambda\mu}^2k_{\langle\lambda,\mu\rangle} = \rho_\mu \eta_{\lambda\mu}\tau_{\lambda\mu}^{4}\alpha,\qquad \eta_{\lambda\mu}\cdot \beta^2k_{\langle\lambda,\mu\rangle} = 0.
\end{gather*}
\end{enumerate}
This computation will be carried out in \cref{ssec:hfpss} and \cref{ssec:extensions}.

\begin{rmk}
The products in (4) which vanish do so for degree reasons. This leads to the simpler rule: if an extension may exist, then the extension does exist.
\tqed
\end{rmk}

\begin{rmk}
Write $\sigma$ for the generating functional of $C_2$. Then $\eta_\sigma = -\eta_{C_2}$, where $\eta_{C_2}$ is the $C_2$-equivariant Hopf map with conventions as in e.g.\ \cite{guillouhillisaksenravenel2020cohomology}.
\tqed
\end{rmk}

\subsubsection{Basis}\label{sssec:kobasis}

Fix $\xi \in RO(A)$. Then $\pi_{\xi+\ast}KO_A$ is either a free $KO_\ast$-module or a direct sum of copies of $KU_\ast$. In the former case, $\pi_{\xi+\ast}KO_A$ is generated over $KO_\ast\otimes RO(A)$ by a class of the form
\[
x = \rho_{\lambda_1}\cdots\rho_{\lambda_n}\cdot\eta_{\mu_1}\cdots\eta_{\mu_s}\cdot t \cdot\beta^2 k_{H_1}\cdots \beta^2 k_{H_m}\cdot \tau^2_{\kappa_1}k_{H_{m+1}}\cdots\tau^2_{\kappa_t}k_{H_{m+t}},
\]
where
\begin{enumerate}
\item $\lambda_1,\ldots,\lambda_n,\mu_1,\ldots,\mu_s$ are linearly independent, and one may suppose $n,s,n+s\leq 2$;
\item $t$ is a product of classes of the form $\beta^{\pm 4}$, $\tau_\lambda^{\pm 4}$, $\tau_H$, $\tau_E$;
\item $H_1+\cdots+H_{m+t}$ is of rank $2(m+t)$;
\item $\kappa_i\in H_{m+i}$ for $1\leq i \leq t$;
\item $\langle\lambda_1,\ldots,\lambda_n,\mu_1,\ldots,\mu_s\rangle\cap (H_1+\cdots+H_{m+t}) = 0$.
\end{enumerate}
In the latter case, $\pi_{\xi+\ast}KO_A$ may be regarded as a $KU_\ast\otimes RO(A)$-module, and is generated by a class of the form $x\cdot \tau^2h_\delta$ where $x$ is as above and $\delta\notin \langle\lambda_1,\ldots,\lambda_n,\mu_1,\ldots,\mu_s\rangle+H_1+\cdots+H_{m+t}$. In either case, such classes are unique in their degree, though their presentation as a monomial need not be.

All of this follows from analogous statements for $KU_A$ (\cref{sssec:basis}) and the work of \cref{ssec:hfpss}.

\subsubsection{Mackey structure}\label{sssec:komackey}

Fix a second elementary abelian $2$-group $B$, and map $g\colon A\rightarrow B$. The restriction
\[
g^\ast\colon \pi_\star KO_B\rightarrow \pi_{g^\ast\star}KO_A
\]
is determined by the following.
\begin{enumerate}
\item $g^\ast$ commutes with $\theta$;
\item $g^\ast(\alpha) = \alpha$;
\item $g^\ast(\eta_\lambda) = \eta_{g^\ast\lambda}$, with the interpretation that $\eta_1 = \alpha$.
\end{enumerate}
Here, (1) and (2) are clear, and we will verify (3) in \cref{lem:eta}.

Now fix a codimension $1$ subgroup $j\colon \ker(\lambda)\rightarrow A$, inducing a transfer
\[
j_!\colon \pi_{j^\ast\star}KO_{\ker(\lambda)}\rightarrow \pi_{\star}KO_A.
\]
This is determined by the following.
\begin{enumerate}
\item $j_!$ commutes with $\theta$;
\item $j_!$ is $\pi_\star KO_A$-linear;
\item $j_!\colon \pi_0 KO_{\ker(\lambda)}\rightarrow \pi_{1-\lambda} KO_A$ satisfies $j_!(1) = \rho_\lambda\alpha$.
\end{enumerate}
We will verify this in \cref{ssec:kotransfer}.

The Weyl action is formally determined by these as in \cref{sssec:weyl}. 

\subsubsection{Operations}

As with $KU_A$, there is an external squaring operation
\[
\Sq\colon \pi_\star KO_A\rightarrow \pi_{\star(1+\sigma)}KO_{A\times C_2},
\]
where we have written $\sigma$ for the generating functional of $C_2$. This commutes with $\theta$, satisfies the identities
\[
\Sq(xy) = \Sq(x)\Sq(y),\qquad \Sq(x+y) = \Sq(x)+\Sq(y)+\tr(xy),
\]
where $\tr$ is the transfer, and is otherwise determined by
\[
\Sq(\alpha) = \eta_\sigma \alpha.
\]
Indeed this is the only class in its degree that lifts $\alpha^2$.

Finally, fix an integer $\ell$, so that the Adams operation $\psi^\ell$ acts on $\pi_\star KO_A[\tfrac{1}{\ell}]$ by ring automorphisms. This commutes with $\theta$, and is otherwise determined by 
\[
\psi^\ell(\alpha)=\alpha.
\]
This is clear, as $\alpha$ is in the Hurewicz image.

\subsection{The HFPSS}\label{ssec:hfpss}

We begin by computing the HFPSS
\[
E_2 = H^\ast(C_2;\pi_\star KU_A)\Rightarrow \pi_\star KO_A.
\]

\begin{lemma}
The subring
\[
H^0(C_2;\pi_\star KU_A)\subset \pi_\star KU_A
\]
is generated by the following elements:
\[
\beta^{\pm 2},~ \tau_\lambda^{\pm 4},~ \tau_H^2,~ \tau_E,~ \rho_\lambda,~ \eta_\lambda,~ k_H,~ \tau_\lambda^2 k_H,~ \tau^2_\lambda h_\lambda.
\]
Here, in writing $\tau_\lambda^2 k_H$ we assume $\lambda\in H$. Where $\alpha$ generates $H^1(C_2;\bbZ\{\beta\})$, we have
\[
H^\ast(C_2;\pi_\star KU_A) = H^0(C_2;\pi_\star KU_A)[\alpha]/(2\alpha,\, \rho_\lambda^2 \cdot \alpha,\, \tau^2_\lambda h_\lambda\cdot\alpha).
\]
\end{lemma}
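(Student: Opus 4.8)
The plan is to compute the full cohomology ring $H^\ast(C_2;\pi_\star KU_A)$ where $C_2$ acts via complex conjugation, realized by $\psi^{-1}$. The strategy is to first understand the action of $\psi^{-1}$ on $\pi_\star KU_A$ using the Adams operation formulas from \cref{thm:kua}, then compute group cohomology degree by degree on the $RO(A)$-graded pieces, and finally assemble the multiplicative structure. First I would record the action of $\psi^{-1}$ on each basic generator: from the Adams operation formulas we have $\psi^{-1}(\beta) = -\beta$, $\psi^{-1}(\tau_\lambda^2) = \tau_\lambda^2\sigma_\lambda$, $\psi^{-1}(\tau_E) = \tau_E$, $\psi^{-1}(\rho_\lambda) = \rho_\lambda$, and $\psi^{-1}(k_H) = k_H$, together with the fact that $\psi^{-1}$ acts as the identity on $RU(A) = \pi_0 KU_A$ (since characters of an elementary abelian $2$-group are self-conjugate). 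This determines the action on every basic monomial by multiplicativity.

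The main computation is then a $C_2$-cohomology calculation on each graded piece $\pi_\xi KU_A$, which by \cref{lem:basis} is a cyclic $RU(A)$-module $\bbZ\{x\}\otimes RU(A)/(\text{relations})$. Since $C_2$ has periodic cohomology, on each such module I would compute $H^0$ (the invariants) and the Tate cohomology controlling $H^{>0}$. The invariants $H^0(C_2;\pi_\star KU_A)$ form a subring, and I would verify that the listed generators $\beta^{\pm 2}, \tau_\lambda^{\pm 4}, \tau_H^2, \tau_E, \rho_\lambda, \eta_\lambda, k_H, \tau_\lambda^2 k_H, \tau_\lambda^2 h_\lambda$ are exactly those monomials fixed by $\psi^{-1}$ (possibly after multiplying by a suitable power to kill a sign): for instance $\beta$ is anti-invariant so only even powers survive, $\tau_\lambda^2$ satisfies $\psi^{-1}(\tau_\lambda^2) = \tau_\lambda^2\sigma_\lambda$ so $\tau_\lambda^4$ is invariant while $\tau_\lambda^2 h_\lambda$ is invariant because $\sigma_\lambda h_\lambda = h_\lambda$ (using $\sigma_\lambda^2 = 1$ from \ref{r3} and the definitions), and $\eta_\lambda = \rho_\lambda\tau_\lambda^{-2}\beta$ is invariant because the sign from $\beta$ cancels the sign from $\sigma_\lambda$ acting via $\rho_\lambda\sigma_\lambda = -\rho_\lambda$ (relation \ref{r2}). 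The class $\tau_H^2 = \prod_{\lambda\in H}\tau_\lambda^4$ type combinations and $k_H, \tau_\lambda^2 k_H$ are invariant since $\psi^{-1}$ fixes $k_H$.

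For the higher cohomology, the key structural input is that $\alpha \in H^1(C_2;\bbZ\{\beta\})$ generates the cohomology in the $\beta$-direction: since $\psi^{-1}(\beta) = -\beta$, the module $\bbZ\{\beta\}$ is the sign representation, whose Tate cohomology is $\bbZ/2$ in every degree, and $\alpha$ is the periodicity generator. I would show that $H^\ast(C_2;\pi_\star KU_A)$ is generated over $H^0$ by $\alpha$, and then identify the annihilator ideal of $\alpha$. The relation $2\alpha = 0$ is standard. The relation $\rho_\lambda^2\cdot\alpha = 0$ should follow because $\rho_\lambda^2 = $ (something) forces the relevant Tate class to vanish; more precisely one checks that multiplication by $\rho_\lambda^2$ lands in a submodule where the Tate cohomology is killed, using $\rho_\lambda\sigma_\lambda = -\rho_\lambda$ to see $\rho_\lambda^2(1-\sigma_\lambda) = 2\rho_\lambda^2$ and a norm argument. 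Similarly $\tau_\lambda^2 h_\lambda\cdot\alpha = 0$ because $\tau_\lambda^2 h_\lambda$ is in the image of the norm (it is an invariant lift witnessing a free summand), so it dies in Tate cohomology.

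The hard part will be verifying that the listed set of generators for $H^0$ is complete and that the three stated relations among $H^0[\alpha]$ generate \emph{all} relations, i.e.\ that $H^\ast(C_2;\pi_\star KU_A) = H^0[\alpha]/(2\alpha, \rho_\lambda^2\alpha, \tau_\lambda^2 h_\lambda\alpha)$ with no further relations or missing classes. This requires a careful bookkeeping over all $\xi\in RO(A)$: I would stratify the graded pieces of $\pi_\star KU_A$ according to which basic monomial generates them (per \cref{lem:basis}), compute the $C_2$-module structure of each (it is a sum of trivial and sign representations tensored with the appropriate quotient of $RU(A)$, where $\psi^{-1}$ acts through the $\sigma_\kappa$'s), and read off $H^0$ and $H^{\geq 1}$ uniformly. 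The main obstacle is organizing this so that the $\alpha$-multiplication structure glues correctly across all degrees and confirming that every Tate class is detected by exactly one of the three relation types; this is essentially a representation-theoretic analysis of $RU(A)$ as a $C_2$-module under the various sign twists, combined with the periodicity of $C_2$-cohomology.
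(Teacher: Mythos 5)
Your proposal follows essentially the same route as the paper: determine the $\psi^{-1}$-action on the multiplicative generators, decompose each $RO(A)$-graded piece (a cyclic $RU(A)$-module on a basic monomial) into trivial, sign, and free $C_2$-summands, observe that the listed classes are the invariants, and identify $\rho_\lambda^2$ and $\tau_\lambda^2 h_\lambda$ as norm classes (namely $\beta^{-1}\tau_\lambda^2+\psi^{-1}(\beta^{-1}\tau_\lambda^2)$ and $\tau_\lambda^2+\psi^{-1}(\tau_\lambda^2)$) so that they annihilate $\alpha$. The "careful bookkeeping" you defer is exactly what the paper's proof carries out, via the reduction of the Bott factor $t$ to $1$ or a single $\tau_\mu^2$ and the dichotomy on whether $\mu$ lies in $\langle\lambda_1,\ldots,\lambda_n\rangle+H_1+\cdots+H_m$, so your plan is sound and matches the paper's argument.
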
\begin{proof}
Note first $H^\ast(C_2;\pi_\ast KU) = \bbZ[\beta^{\pm 2},\alpha]/(2\alpha)$, and that $\pi_0 KU_A$ is entirely fixed by $\psi^{-1}$. Fix a basic monomial
\[
x = \rho_{\lambda_1}\cdots\rho_{\lambda_n} \cdot t \cdot k_{H_1}\cdots k_{H_m}
\]
such that $t$ is a product of classes of the form $\tau_\lambda^2$ and $\tau_E$. It is sufficient to verify the following: if $\psi^{-1}(x) = x$, then $x$ is a product of the listed generators; if $\psi^{-1}(x) = -x$, then $\beta x$ is a product of the listed generators;  and finally if $\psi^{-1}(x)$ is linearly independent from $x$, then both $x + \psi^{-1}(x)$ and $\beta^{-1} x + \psi^{-1}(\beta^{-1} x)$ are products of the listed generators, this product involves either some $\rho_\lambda^2$ or $\tau^2_\lambda h_\lambda$, and both $\rho_\lambda^2$ and $\tau^2_\lambda h_\lambda$ may be obtained as such a class.

As $\tau_E$ and $\tau_\lambda^{\pm 4}$ are fixed by $\psi^{-1}$, we may suppose that $t$ is of the form $t = \tau_{\mu_1}^2\cdots\tau_{\mu_s}^2$. If $s\geq 2$, then we may inductively apply the relation $\tau_{\mu_1}^2\cdots\tau_{\mu_s}^2 = \tau_{\langle\mu_1,\mu_2\rangle} \cdot \tau_{\mu_1\mu_2}^{-4} \cdot \tau_{\mu_1\mu_2}^2\tau_{\mu_3}^2\cdots\tau_{\mu_s}^2$ to further reduce to the case where $t = 1$ or $t = \tau_\mu^2$. In the former case, $x$ is fixed by $\psi^{-1}$ and is a product of the listed generators, so consider the latter case.

Suppose first $\mu \in \langle\lambda_1,\ldots,\lambda_n\rangle + H_1+\cdots+H_m$. After possibly reordering $\lambda_1,\ldots,\lambda_n$ and $H_1,\ldots,H_m$, we may suppose $\mu = \lambda_1\cdots\lambda_r\cdot\kappa_1\cdots\kappa_s$ with $0\leq r \leq n$, $0\leq s\leq m$, and $\kappa_i\in H_i$. We now have
\[
x = \eta_{\lambda_1}\cdots\eta_{\lambda_r}\cdot \rho_{\lambda_{r+1}}\cdots\rho_{\lambda_n}\cdot\beta^{-r}\cdot \tau^2_{\kappa_1} k_{H_1}\cdots\tau^2_{\kappa_s}k_{H_s}\cdot k_{H_{s+1}}\cdots k_{H_m}\cdot \tau_{\lambda_1}^{-2}\cdots\tau_{\lambda_r}^{-2}\cdot\tau_{\kappa_1}^{-2}\cdots\tau_{\kappa_r}^{-2}\cdot\tau_{\mu}^2.
\]
If $r$ is even, then this is fixed by $\psi^{-1}$, and is a product of the listed generators, and if $r$ is odd then the same is true of $\beta x$.

Suppose next $\mu\notin \langle\lambda_1,\ldots,\lambda_n\rangle + H_1+\cdots+H_m$. In this case we have
\begin{gather*}
x + \psi^{-1}(x) = \rho_{\lambda_1}\cdots\rho_{\lambda_n}\cdot\tau^2_\mu h_\mu \cdot k_{H_1}\cdots k_{H_m} \\
\beta^{-1} x + \psi^{-1}(\beta^{-1} x) = \rho_{\lambda_1}\cdots\rho_{\lambda_n}\cdot\rho_\mu^2\cdot k_{H_1}\cdots k_{H_m},
\end{gather*}
and these satisfy the desired properties.
\end{proof}

\begin{lemma}\label{lem:diffs}
The differentials in the HFPSS are determined by
\begin{gather*}
d_3(\beta^2) = \alpha^3,\qquad d_3(\tau_\lambda^4) = 0,\qquad d_3(\tau_H^2)=0,\qquad d_3(\tau_E) = 0,\qquad d_3(\rho_\lambda) = 0\\
 d_3(\eta_\lambda) = 0, \qquad d_3(\beta^2k_H) = 0,\qquad d_3(\tau_\lambda^2 k_H) = 0, \qquad d_3(\tau_\lambda^2 h_\lambda) = 0,
\end{gather*}
after which $E_4 = E_\infty$.
\end{lemma}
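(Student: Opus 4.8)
The plan is to run this as a module computation over the classical homotopy fixed point spectral sequence for $KO \simeq KU^{\h C_2}$ and then to exhibit explicit permanent cycles. The unit map $KU \to KU_A$ is $C_2$-equivariant (the $\psi^{-1}$-action is natural in the ring), so it induces a map from the nonequivariant HFPSS into ours carrying $\beta \mapsto \beta$ and $\alpha \mapsto \alpha$. Since $d_3(\beta^2) = \alpha^3$ holds nonequivariantly, naturality forces $d_3(\beta^2) = \alpha^3$ in our spectral sequence. It then remains to prove that the other listed generators are permanent cycles and that $E_4 = E_\infty$; the Leibniz rule and the ring structure of the preceding lemma will then propagate $d_3$ to all of $E_2$.

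The efficient way to check the permanent-cycle claims is to exhibit each class in the image of complexification $\theta$, since anything lifting to $\pi_\star KO_A$ is detected in filtration $0$ and hence is a permanent cycle. The classes $\tau_\lambda^4$, $\tau_H^2$, and $\tau_E$ are $KU$-Bott classes of the virtual representations $4-4\lambda$, a double of $\sum_{\lambda\in H}\lambda$, and $8-\sum_{\lambda\in E}\lambda$ respectively; each is an even combination of functionals, so $w_1=w_2=0$ and the representation is $\Spin$, whence these are complexifications of $KO$-Bott classes. The class $\rho_\lambda$ is the Hurewicz image of a class in $\pi_{-\lambda}S_A$, hence in the image of $\theta$. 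The class $\eta_\lambda$ is by construction the restriction along $\lambda\colon A\to C_2$ of the $C_2$-equivariant Hopf element $\eta_\sigma\in\pi_\sigma KO_{C_2}$, which complexifies to $\rho_\sigma\tau_\sigma^{-2}\beta$; so $\eta_\lambda$ lifts. Finally $\tau_\lambda^2 h_\lambda = \tau_\lambda^2 + \psi^{-1}(\tau_\lambda^2)$ is a $C_2$-Galois transfer, i.e.\ it equals $\theta(\tr(\tau_\lambda^2))$ for the transfer $\tr\colon \pi_\star KU_A \to \pi_\star KO_A$ satisfying $\theta\,\tr = 1 + \psi^{-1}$, and is therefore a permanent cycle.

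The main obstacle is the two $k_H$-families $\beta^2 k_H$ and $\tau_\lambda^2 k_H$ (with $\lambda\in H$). Using \ref{r5} one checks $\sigma_\lambda k_H = k_H$, so both classes are genuinely $\psi^{-1}$-fixed rather than transfers, and $k_H$ itself is \emph{not} a permanent cycle. I would handle these by reduction to rank $2$: since $k_H$ is pulled back along $A\to H^\vee$ and Adams operations are natural, the induced map $E_r(H^\vee)\to E_r(A)$ is a map of HFPSSs through which $\beta$, $\tau_\lambda^2$, and $k_H$ all factor, so it suffices to show $\beta^2 k$ and $\tau_\lambda^2 k$ survive in the rank-$2$ spectral sequence. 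In that finite computation one verifies directly that these classes lift to $\pi_\star KO_{H^\vee}$, from which it follows that $d_3(k_H) = \beta^{-2}\alpha^3 k_H$; the consistency check $d_3(\beta^2 k_H) = \alpha^3 k_H + \beta^2 d_3(k_H) = 2\alpha^3 k_H = 0$ then holds precisely because $2\alpha = 0$, explaining why only the multiples $\beta^2 k_H$ and $\tau_\lambda^2 k_H$, and not $k_H$, appear among the surviving generators.

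For $E_4 = E_\infty$ I would argue by sparseness. After the $d_3$-differentials, $\alpha^3$ times any class becomes a boundary: for a permanent cycle $y$ one has $\alpha^3 y = d_3(\beta^2 y)$, and $\beta^{-2}\alpha^3 k_H = d_3(k_H)$ handles the remaining generator. Hence on $E_4$ every class lies in $\alpha$-filtration at most $2$. A differential $d_r$ with $r\geq 4$ raises filtration by at least $4$, landing in filtration $\geq 4 > 2$, where $E_4$ vanishes; therefore $d_r = 0$ for all $r\geq 4$ and $E_4 = E_\infty$.
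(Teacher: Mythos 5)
Most of your argument is sound and matches the paper's strategy: $d_3(\beta^2)=\alpha^3$ by comparison with the nonequivariant HFPSS, permanent cycles exhibited as lifts along $\theta$ (your Galois-transfer argument for $\tau_\lambda^2 h_\lambda = \theta(\tr(\tau_\lambda^2))$ is a clean alternative to the paper's appeal to the Hurewicz image), and the sparseness argument for $E_4=E_\infty$, which the paper leaves implicit. The reduction of the $k_H$ classes to the rank-$2$ case by naturality of the spectral sequence in $A$ is also valid.

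However, there is a genuine gap exactly at the step you flag as the main obstacle. Having reduced to rank $2$, you write that ``one verifies directly that these classes lift to $\pi_\star KO_{H^\vee}$,'' but no such verification is given, and this is the only non-formal content of the lemma: every other generator is handled by a general lifting principle (Spin Bott classes, Hurewicz images, norms), whereas $\beta^2 k_H$ and $\tau_\lambda^2 k_H$ admit no such description --- note that $k=j_!(\tau^2)$ is a transfer of a class $\tau^2$ that is \emph{not} $\psi^{-1}$-fixed, so one cannot simply transfer up from $KO$ --- and the complementary assertion that $k_H$ itself does \emph{not} survive requires producing a nonzero $d_3$. The paper's argument here is: restrict to rank $2$ with $H=\langle\lambda,\mu\rangle$; since $k_H$ restricts to $\tau^2 h$ on each of $\ker(\lambda)$, $\ker(\mu)$, $\ker(\lambda\mu)$, and $\tau^2h\cdot\alpha=0$, whichever of $k_H$ or $\beta^{-2}k_H$ is a $d_3$-cycle yields a surviving class ($k_H\alpha$ or $\beta^{-2}k_H\alpha$) that is divisible by all of $\rho_\lambda,\rho_\mu,\rho_{\lambda\mu}$; for degree reasons this forces $\rho_\lambda\rho_\mu\rho_{\lambda\mu}=\beta^{-2}k_H\alpha\neq 0$ in $\pi_\star KO_A$, which is only possible if $\beta^{-2}k_H$ (rather than $k_H$) is the cycle. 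Some argument of this kind is needed to break the symmetry between the two options $d_3(k_H)=0$ and $d_3(k_H)=\beta^{-2}k_H\alpha^3$; as written, your proof assumes the answer. A smaller point: your blanket claim that an ``even combination of functionals'' has $w_1=w_2=0$ is false in general ($2\lambda$ has $w_2=\lambda^2\neq 0$, which is precisely why $\tau_\lambda^2$ is only $\Spin^c$ and does not appear in the list); the conclusions for $4\lambda$, $2\sum_{\lambda\in H\setminus\{1\}}\lambda$, and $\sum_{\lambda\in E}\lambda$ are correct but each needs its own Stiefel--Whitney computation.
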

\begin{proof}
The differential $d_3(\beta^2) = \alpha^3$ is standard. The structure of $H^\ast(C_2;\pi_\star KU_A)$ then implies that for each multiplicative generator $x$, either $d_3(x) = 0$ or $d_3(x) = \beta^{-2} x \alpha^3$, and that these are the only differentials. Now $\tau_\lambda^4$, $\tau_H^2$, and $\tau_E$ are cycles as they are Thom classes of $\Spin$ bundles, and $\rho_\lambda$, $\eta_\lambda$, $\tau^{-2}_\lambda k_H$, and $\tau_\lambda^2 h_\lambda$ are cycles as they are in the Hurewicz image, the first by construction, second by its relation to the equivariant Hopf map, and last two as they are of the form $\tr(1)$. It remains to show that $k_H$ is not a cycle, and here may suppose without loss of generality that $A^\vee = H = \langle\lambda,\mu\rangle$.

Recall that $k_H$ restricts to $\tau^2 h$ over each of $\ker(\lambda)$, $\ker(\mu)$, and $\ker(\lambda\mu)$, and that this class is killed by $\alpha$. Thus, if $d_3(k_H) = 0$ then $k_H\cdot \alpha$ survives to a class which is divisible by each of $\rho_\lambda$, $\rho_\mu$, and $\rho_{\lambda\mu}$, and if instead $d_3(\beta^{-2} k_H) = 0$ then the same holds for $\beta^{-2} k_H \cdot \alpha$. In either case $\rho_\lambda\rho_\mu\rho_{\lambda\mu} \neq 0$, and the only possibility is that $\rho_\lambda\rho_\mu\rho_{\lambda\mu} = \beta^{-2}k_H \cdot \alpha$, so it must be that $\beta^{-2}k_H$ is a cycle.
\end{proof}

\subsection{Extensions}\label{ssec:extensions}

There is room for hidden extensions in the HFPSS, and to fully describe $\pi_\star KO_A$ we must resolve these. Our work is simplified by the following observation: in any given stem, the $E_\infty$ page of the HFPSS is concentrated in a single filtration. In particular, there is no room for nontrivial additive extensions, and no room for hidden multiplicative extensions with additional indeterminacy. Thus there are three basic relations in $\pi_\star KU_A$ we must consider:
\[
\rho_\lambda h_\lambda = 0,\qquad \rho_\lambda\rho_\mu\rho_{\lambda\mu} = 0,\qquad \rho_\lambda k_H = 0,
\]
the last assuming $\lambda\in H$. The relations on the $E_\infty$ page of the HFPSS which may hide a nontrivial product in $\pi_\star KO_A$ are of this form, only where $\eta_\kappa$ may take the place of $\rho_\kappa$, where $\tau^2_\kappa h_\kappa$ or $\tau^2_\kappa\beta^2 h_\kappa$ must take the place of $h_\kappa$, and where $\tau^2_\kappa k_H$ or $\beta^2 k_H$ must take the place of $k_H$. This reduces our work to a case analysis. Before carrying this out, we note the following.

\begin{lemma}\label{lem:eta}
Let $g\colon A\rightarrow B$ be a map of elementary abelian $2$-groups. Then
\[
g^\ast(\eta_\lambda)=\eta_{g^\ast\lambda},
\]
with the interpretation that $\eta_1=\alpha$.
\end{lemma}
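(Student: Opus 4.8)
The plan is to reduce the general statement to a universal case and then invoke functoriality of restriction. First I would establish the identity $\eta_\lambda = \lambda^\ast(\eta_\sigma)$ for every nontrivial functional $\lambda\in B^\vee$, regarded as a homomorphism $\lambda\colon B\to C_2$, where $\eta_\sigma\in\pi_\sigma KO_{C_2}$. Since $\lambda^\ast$ commutes with $\theta$ and restricts on $KU$ as a ring map preserving Bott classes with $\lambda^\ast(\rho_\sigma)=\rho_\lambda$, we may compute $\theta(\lambda^\ast\eta_\sigma)=\lambda^\ast(\rho_\sigma\tau_\sigma^{-2}\beta)=\rho_\lambda\tau_\lambda^{-2}\beta=\theta(\eta_\lambda)$. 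To upgrade this to an equality in $\pi_\lambda KO_B$ I need $\theta$ to be injective in degree $\lambda$. This follows from the HFPSS: the class $\eta_\lambda$ is nonzero with nonzero complexification, so it is detected in filtration $0$; since the $E_\infty$-page is concentrated in a single filtration in each stem (\cref{ssec:extensions}), that filtration is forced to be $0$ in degree $\lambda$, and hence $\pi_\lambda KO_B$ is detected injectively by its image under $\theta$. Thus $\lambda^\ast\eta_\sigma=\eta_\lambda$, which also reproves that $\eta_\lambda$ is well defined by its complexification.

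Given this, the lemma for nontrivial $g^\ast\lambda$ becomes a formal consequence of the contravariant functoriality $g^\ast\circ\lambda^\ast=(\lambda\circ g)^\ast=(g^\ast\lambda)^\ast$. Indeed $g^\ast(\eta_\lambda)=g^\ast\lambda^\ast(\eta_\sigma)=(g^\ast\lambda)^\ast(\eta_\sigma)$, and if $g^\ast\lambda\neq 1$ then applying the universal identity to the nontrivial functional $g^\ast\lambda\in A^\vee$ gives $(g^\ast\lambda)^\ast(\eta_\sigma)=\eta_{g^\ast\lambda}$, as desired.

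The main obstacle is the case $g^\ast\lambda=1$, where complexification is useless since $\theta(\alpha)=0$ cannot distinguish $\alpha$ from $0$; here the factorization through the trivial group does the work. The trivial homomorphism $g^\ast\lambda\colon A\to C_2$ factors as $A\xrightarrow{c} 1\xrightarrow{i} C_2$, so that $(g^\ast\lambda)^\ast=c^\ast i^\ast$. Now $i^\ast(\eta_\sigma)$ is the restriction of $\eta_\sigma$ to the trivial subgroup, which is the nonequivariant Hopf map $\eta$ (recall $\eta_\sigma=-\eta$ for the $C_2$-equivariant Hopf map, which restricts to the nonequivariant one, together with $2\eta=0$), and $c^\ast(\eta)=\alpha$ since $\alpha$ is by definition the inflation of the nonequivariant Hopf map. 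Hence $g^\ast(\eta_\lambda)=\alpha=\eta_1$, completing the case analysis.
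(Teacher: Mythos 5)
Your overall strategy is sound and somewhat more elaborate than the paper's. The paper dispatches the case $g^\ast\lambda\neq 1$ in one clause (it is clear from $\theta$ being injective in degree $g^\ast\lambda$, exactly as you spell out via the filtration-$0$ concentration of the $E_\infty$-page), and then reduces, as you do, to the restriction $i^\ast\colon\pi_\sigma KO_{C_2}\rightarrow\pi_1 KO$ along the inclusion of the trivial group. Where you diverge is in how you evaluate $i^\ast(\eta_\sigma)$: you invoke the identification of $\eta_\sigma$ with $-\eta$ for the $C_2$-equivariant Hopf map and use that the equivariant Hopf map restricts to the classical one. That identification is only stated in the paper as a remark in the summary and is never proved there, so within the paper's logic your key case rests on an imported fact (one that is true and available from the literature, but whose verification is of roughly the same difficulty as the lemma itself). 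The paper's own argument is self-contained and shorter: since $i^\ast(\eta_\sigma)$ lands in $\pi_1 KO=\bbZ/2\{\alpha\}$, it suffices to show it is nonzero, and by the fundamental cofiber sequence $\ker(i^\ast)=\im(\rho_\sigma\colon\pi_{2\sigma}KO_{C_2}\rightarrow\pi_\sigma KO_{C_2})$; one checks from the ring structure already computed that $\eta_\sigma$ is not in the image of $\rho_\sigma$, so $i^\ast(\eta_\sigma)=\alpha$. If you replace your appeal to the equivariant Hopf map with this divisibility argument (or supply an independent proof that the Hurewicz image of the equivariant Hopf map in $KO_{C_2}$ is $-\eta_\sigma$), your proof is complete; as written, the trivial case has a dependency the paper deliberately avoids.
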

\begin{proof}
We need only consider the case where $g^\ast\lambda = 1$, and here we may reduce to the case of $g\colon e\rightarrow C_2$. Write $\sigma$ for the generating functional of $C_2$. As $\eta_\sigma$ is not in the image of $\rho_\sigma$, it must have nonzero image in $\pi_1 KO$, so must be $\alpha$.
\end{proof}

We may now proceed to our case analysis. Half of these cases proceed by observing that $\pi_\star KO_A$ vanishes in the degree containing the product under consideration. We illustrate these in \cref{lem:rel1}(2), omitting the analogous details in the remaining cases.

\begin{lemma}\label{lem:rel1}
\hphantom{blank}
\begin{enumerate}
\item $\rho_\lambda\rho_\mu\rho_{\lambda\mu} = \beta^{-2}k_{\langle\lambda,\mu\rangle}\cdot\alpha$.
\item $\rho_\lambda\rho_\mu\eta_{\lambda\mu} = 0$.
\item $\rho_{\lambda\mu}\eta_\lambda\eta_\mu = \tau_\lambda^{-2}\tau_\mu^{-2}k_{\langle\lambda,\mu\rangle}\cdot\alpha$.
\item $\eta_\lambda\eta_\mu\eta_{\lambda\mu}=0$.
\end{enumerate}
\end{lemma}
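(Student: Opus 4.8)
The plan is to treat all four products uniformly via the HFPSS. First I would observe that each maps to zero under $\theta$: modulo invertible Bott classes each is a multiple of $\rho_\lambda\rho_\mu\rho_{\lambda\mu}$, which vanishes in $\pi_\star KU_A$ by \ref{r4}. Since $\theta\colon(\pi_\star KO_A)/(\alpha)\to\pi_\star KU_A$ is injective, each product therefore lies in the ideal $(\alpha)$, i.e.\ is detected in filtration $\geq 1$ of the HFPSS. Because the $E_\infty$-page is concentrated in a single filtration in each stem, it then suffices to identify the filtration-$1$ contribution to $\pi_\xi KO_A$ in each of the four relevant degrees $\xi$, and I would reduce to the case $A$ of rank $2$ with $A^\vee=\langle\lambda,\mu\rangle$, the general case being immediate.

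The crux is a single computation. A filtration-$1$ class in $\pi_\xi KO_A$ is detected in $H^1(C_2;\pi_{\xi+1}KU_A)=\alpha\cdot H^0(C_2;\pi_{\xi-1}KU_A)$, so everything is controlled by the $\psi^{-1}$-action on $\pi_{\xi-1}KU_A$. By \cref{lem:basis} this group is, in each of the four cases, the rank-one group generated by $k_{\langle\lambda,\mu\rangle}$ times a single Bott monomial, and $\psi^{-1}$ acts on it by the sign $(-1)^e$, where $e$ is the exponent of $\beta$ in that Bott monomial; the contributions $\sigma_\nu$ coming from factors $\tau_\nu^{\pm2}$ disappear because $\sigma_\lambda\sigma_\mu=\sigma_{\lambda\mu}$ (\ref{r3}) and $\sigma_\nu k_{\langle\lambda,\mu\rangle}=k_{\langle\lambda,\mu\rangle}$ for $\nu\in\langle\lambda,\mu\rangle$ (\ref{r5}). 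The four Bott monomials are $\beta^{-2}$ for (1), $\tau_\lambda^{-2}\tau_\mu^{-2}$ for (3), $\tau_{\lambda\mu}^{-2}\beta^{-1}$ for (2), and $\tau_\lambda^{-2}\tau_\mu^{-2}\tau_{\lambda\mu}^{-2}\beta$ for (4). Thus $H^0(C_2;\pi_{\xi-1}KU_A)$ vanishes exactly when $e$ is odd: this disposes of (2) and (4), which give $\rho_\lambda\rho_\mu\eta_{\lambda\mu}=0$ and $\eta_\lambda\eta_\mu\eta_{\lambda\mu}=0$, while for (1) and (3) the target is a nonzero cyclic group, in fact $\bbZ/2$ once the relation $\tau_\nu^2 h_\nu\cdot\alpha=0$ is taken into account.

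For (1) there is nothing further to do: the identity $\rho_\lambda\rho_\mu\rho_{\lambda\mu}=\beta^{-2}k_{\langle\lambda,\mu\rangle}\alpha$ is precisely what the differential argument in the proof of \cref{lem:diffs} produced. For (3) it remains to rule out $\rho_{\lambda\mu}\eta_\lambda\eta_\mu=0$, i.e.\ to show the product is the nonzero element of $\bbZ/2\{\tau_\lambda^{-2}\tau_\mu^{-2}k_{\langle\lambda,\mu\rangle}\alpha\}$; this is the main obstacle. Restriction to the codimension-$1$ subgroups is unhelpful, since $\rho_{\lambda\mu}$ restricts to $0$ along $\ker(\lambda\mu)$ and the $k_{\langle\lambda,\mu\rangle}$-part is $\alpha$-torsion. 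Instead I would restrict along $j\colon\ker(\lambda)\cong C_2\hookrightarrow A$: using \cref{lem:eta} one computes $j^\ast(\rho_{\lambda\mu}\eta_\lambda\eta_\mu)=\rho_\sigma\eta_\sigma\alpha$ and $j^\ast(\tau_\lambda^{-2}\tau_\mu^{-2}k_{\langle\lambda,\mu\rangle}\alpha)=h_\sigma\alpha$, and both are the nonzero class of $\pi_\star KO_{C_2}$ detected by $d_\sigma\alpha\in H^1(C_2;\pi_2 KU_{C_2})$. The one point requiring care is that $d_\sigma\alpha\neq0$ even though $\rho_\sigma^2\alpha=0$: the factorization $d_\sigma=\rho_\sigma^2\tau_\sigma^{-2}\beta$ involves $\tau_\sigma^{-2}\beta$, which is not $\psi^{-1}$-fixed and so does not live in $H^\ast(C_2;\pi_\star KU_{C_2})$, whence the relation $\rho_\sigma^2\alpha=0$ cannot be applied to it. Since the restriction is nonzero, the product is the generator, establishing (3).
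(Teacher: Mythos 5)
Your proof is correct, and for the substantive cases it takes a genuinely different route from the paper's. For (2) and (4) the two arguments come to the same thing: the paper simply asserts that $\pi_\xi KO_A = 0$ in the relevant degrees, while you derive this from $H^0(C_2;\pi_{\xi-1}KU_A)=0$ via the sign of $\psi^{-1}$ on the generating Bott monomial; your bookkeeping (the $\sigma_\nu$'s from $\tau_\nu^{\pm 2}$ are absorbed by $\sigma_\nu k_H = k_H$, leaving only $(-1)^e$) is right, though for completeness you should also note that filtration $\geq 2$ vanishes, which follows since $\pi_{\xi-1}KU_A\neq 0$ forces $\pi_\xi KU_A = 0$ and hence $\pi_{\xi-2}KU_A=0$. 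For (1) you defer to the proof of \cref{lem:diffs}, where the identity is indeed already established en route to showing $\beta^{-2}k_H$ is the $d_3$-cycle; the paper's proof instead re-runs the divisibility argument (the right-hand side restricts to zero on each of $\ker(\lambda)$, $\ker(\mu)$, $\ker(\lambda\mu)$, hence is divisible by each $\rho$, and this pins it down). The real divergence is (3): the paper works on the right-hand side, showing $\tau_\lambda^{-2}\tau_\mu^{-2}k_{\langle\lambda,\mu\rangle}\alpha$ dies on restriction to $\ker(\lambda\mu)$ and is therefore divisible by $\rho_{\lambda\mu}$, with ``the only possibility'' for the quotient; you work on the left-hand side, restricting along $j\colon\ker(\lambda)\hookrightarrow A$ so that \cref{lem:eta} gives $j^\ast(\rho_{\lambda\mu}\eta_\lambda\eta_\mu)=\rho_\sigma\eta_\sigma\alpha=d_\sigma\alpha = \alpha+\sigma_\sigma\alpha\neq 0$ in $H^1(C_2;\pi_2 KU_{C_2})\cong(\bbZ/2)^2$, which combined with the $E_\infty$-page being $\bbZ/2$ in positive filtration forces the identity. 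Your parenthetical on why $d_\sigma\alpha\neq 0$ despite $\rho_\sigma^2\alpha=0$ --- namely that $\tau_\sigma^{-2}\beta$ is not $\psi^{-1}$-fixed, so the two classes live in non-isomorphic $C_2$-modules --- is exactly the point that needs care and is correctly resolved. What your detection argument buys is that it identifies the product directly without having to enumerate possible divisors; what the paper's buys is brevity and uniformity with the surrounding extension lemmas.
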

\begin{proof}
We may suppose without loss of generality that $A$ is of rank $2$.

(1)~~ The class $\beta^{-2} k_{\langle\lambda,\mu\rangle}\cdot\alpha$ is in the kernel of restriction to each of $\ker(\lambda)$, $\ker(\mu)$, and $\ker(\lambda\mu)$. It is therefore divisible by each of $\rho_\lambda$, $\rho_\mu$, and $\rho_{\lambda\mu}$, and this is the only possibility.

(2)~~ This holds as $\pi_{-1+(3-\lambda-\mu-\lambda\mu)-(2-2\lambda\mu)}KO_A = 0$. To see this, first observe that $\pi_{(3-\lambda-\mu-\lambda\mu)-(2-2\lambda\mu)}KU_A = \bbZ\{\tau_{\lambda\mu}^{-2} k_{\langle \lambda,\mu\rangle}\}$. It follows from \cref{lem:diffs} that the $\bbZ$-graded piece $\pi_{\ast+(3-\lambda-\mu-\lambda\mu)-(2-2\lambda\mu)}KO_A$ is a free $KO_\ast$-module generated by $\tau_{\lambda\mu}^{-2}k_{\langle\lambda,\mu\rangle}$. That $\pi_{-1+(3-\lambda-\mu-\lambda\mu)-(2-2\lambda\mu)}KO_A = 0$ then follows as $\pi_{-1}KO = 0$.

(3)~~ The class $\tau_\lambda^{-2}\tau_\mu^{-2} k_{\langle\lambda,\mu\rangle}\cdot\alpha$ is in the kernel of restriction to $\ker(\lambda\mu)$, and is therefore divisible by $\rho_{\lambda\mu}$. This is the only possibility.

(4)~~ This holds as $\pi_{3+(3-\lambda-\mu-\lambda\mu)-(2-2\lambda)-(2-2\mu)-(2-2\lambda\mu)} KO_A = 0$.
\end{proof}

\begin{lemma}\label{lem:c2hidden}
\hphantom{blank}
\begin{enumerate}
\item $\rho_\lambda\cdot\tau^2_\lambda h_\lambda = 0$;
\item $\eta_\lambda\cdot \tau^2_\lambda h_\lambda = \rho_\lambda\alpha^2$;
\item $\rho_\lambda\cdot \tau^2_\lambda\beta^2 h_\lambda =  \eta_\lambda\tau_\lambda^4\alpha^2$;
\item $\eta_\lambda\cdot \tau^2_\lambda \beta^2 h_\lambda = 0$.
\end{enumerate}
\end{lemma}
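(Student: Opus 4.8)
The plan is to reduce everything to $A=C_2$ and then resolve four hidden extensions in the HFPSS, all of which descend from the single relation $\rho_\sigma h_\sigma=0$ in $KU$. Each of the six classes occurring ($\rho_\lambda,\eta_\lambda,\tau_\lambda^2h_\lambda,\tau_\lambda^2\beta^2h_\lambda,\alpha,\tau_\lambda^4$) is pulled back along the quotient $g\colon A\to A/\ker(\lambda)\cong C_2$, using \cref{lem:eta} to handle $g^\ast(\eta_\sigma)=\eta_\lambda$; since $g^\ast$ is a ring map it suffices to prove all four identities in $\pi_\star KO_{C_2}$, writing $\sigma$ for the generator. I would first record that each product is $\theta$-trivial: $\theta$ of every left-hand side is a Bott multiple of $\rho_\sigma h_\sigma$, which vanishes by \ref{r2}. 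Hence each product lies in \emph{positive} HFPSS filtration, and by the single-filtration-per-stem observation opening \cref{ssec:extensions} it is either $0$ or the unique nonzero class in the filtration of its stem.

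The computational engine is the transfer along $i\colon\ker(\sigma)=e\hookrightarrow C_2$, together with the exact sequence of \cref{lem:cyclic} (equivalently Convention~(5)), which gives $\ker(\rho_\sigma)=\operatorname{im}(i_!)$ and $\rho_\sigma\circ i_!=0$. From \cref{lem:halftransfer} we have $\tau^2h=i_!(1)$ in $KU$, and by naturality of the transfer under $\theta$ (a formal fact) together with the vanishing of $\ker\theta$ in this even stem, the same holds in $KO_{C_2}$. Then (1) is immediate: $\rho_\sigma\cdot\tau^2h=\rho_\sigma\, i_!(1)=0$. For (2), Frobenius reciprocity and $i^\ast\eta_\sigma=\alpha$ (\cref{lem:eta}) give $\eta_\sigma\cdot\tau^2h=\eta_\sigma\, i_!(1)=i_!(\alpha)$; since $\rho_\sigma\alpha^2$ is a nonzero permanent cycle (\cref{ssec:hfpss}) lying in $\ker\rho_\sigma=\operatorname{im}i_!$ and $\pi_1KO=\bbZ/2\{\alpha\}$, the image of $i_!$ here is exactly $\{0,\rho_\sigma\alpha^2\}$, forcing $i_!(\alpha)=\rho_\sigma\alpha^2$, which is (2).

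For (3) and (4) the relevant class is the genuine $2\beta^2\in\pi_4KO_{C_2}$. Frobenius gives $i_!\big(i^\ast(2\beta^2)\big)=2\beta^2\cdot i_!(1)=2\tau^2\beta^2h$, so $\ker(\rho_\sigma)=\operatorname{im}(i_!)$ in degree $6-2\sigma$ is generated by $2\tau^2\beta^2h$. Because $\tau^2\beta^2h$ restricts to the generator $2\beta^2$ of $\pi_4KO\cong\bbZ$ (checked after complexifying), it is not $2$-divisible, hence $\tau^2\beta^2h\notin\ker\rho_\sigma$ and $\rho_\sigma\tau^2\beta^2h\neq0$; by single-filtration this must be the nonzero class $\eta_\sigma\tau^4\alpha^2$, giving (3). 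For (4), $2\,\eta_\sigma\tau^2\beta^2h=\eta_\sigma\, i_!(i^\ast 2\beta^2)=i_!(\alpha\cdot 2\beta^2)=i_!(\eta\cdot 2\beta^2)=0$ since $\pi_5KO=0$; combined with the count from \cref{ssec:hfpss} showing the stem $6-\sigma$ carries no nonzero class in positive filtration, the positive-filtration element $\eta_\sigma\tau^2\beta^2h$ vanishes.

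The main obstacle is precisely the two nontrivial extensions (2) and (3): complexification $\theta$, which is the workhorse for $KU$-based arguments, is blind to them because it kills $\alpha$, so it only certifies that the products sit in positive filtration without distinguishing $0$ from the nonzero $\alpha$-power class. The decisive extra input is the transfer from the trivial subgroup, exactness of the $\rho_\sigma$–$\res$–$\tr$ sequence, and the indivisibility of the generator $2\beta^2\in\pi_4KO$. Conceptually, the contrast between the vanishing of (1) and the nonvanishing of (3)—the two products ``differing by $\beta^2$''—is exactly the failure of $\beta^2$ to be a permanent cycle, $d_3(\beta^2)=\alpha^3$ (\cref{lem:diffs}): multiplying the trivial extension (1) by ``$\beta^2$'' is precisely what converts it into the nontrivial extension (3).
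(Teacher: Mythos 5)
Your argument is correct, but for the two nontrivial extensions it takes a genuinely different route from the paper. The paper kills (1) and (4) by showing the ambient groups $\pi_{2-3\lambda}KO_A$ and $\pi_{6-\lambda}KO_A$ vanish outright, gets (3) by observing that $\eta_\lambda\tau_\lambda^4\alpha^2$ restricts to $\alpha^3=0$ on $\ker(\lambda)$ and is therefore divisible by $\rho_\lambda$, and --- the main difference --- gets (2) by passing \emph{up} to a rank-two group, where $\rho_\lambda\rho_{\lambda\mu}\alpha^2$ is identified with the transfer of $\tau_\sigma^2h_\sigma$ from $\ker(\mu)$ and Frobenius reciprocity rewrites that transfer as $\rho_{\lambda\mu}\eta_\lambda\cdot\tau_\lambda^2h_\lambda$. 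You instead stay entirely inside $C_2$ and run everything through the single transfer $i_!$ from the trivial subgroup: $\tau^2h=i_!(1)$ makes (1) the identity $\rho_\sigma\circ i_!=0$; (2) becomes the evaluation $i_!(\alpha)=\rho_\sigma\alpha^2$, forced because $\ker(\rho_\sigma)=\im(i_!)$ and $\rho_\sigma$ carries the $2$-torsion group $\pi_{2-\sigma}KO_{C_2}=\bbZ/2\{\rho_\sigma\alpha^2\}$ into the torsion-free $\pi_{2-2\sigma}KO_{C_2}$, so its kernel is everything; and (3) becomes the observation that $\im(i_!)=\bbZ\{2\tau^2\beta^2h\}$ has index $2$ in $\pi_{6-2\sigma}KO_{C_2}=\bbZ\{\tau^2\beta^2h\}$, so $\rho_\sigma\cdot\tau^2\beta^2h\neq 0$ and the single-filtration observation of \cref{ssec:extensions} identifies it with $\eta_\sigma\tau^4\alpha^2$. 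Your approach is more uniform --- one exact sequence does all the work --- at the price of needing $i_!(1)=\tau^2h$ in $KO_{C_2}$ (your justification via injectivity of $\theta$ on the filtration-zero stem $2-2\sigma$ is right) and the precise $E_\infty$ groups in stems $2-\sigma$, $6-2\sigma$, $6-3\sigma$, $6-\sigma$. Two small cleanups: in (2) you should say explicitly why $\rho_\sigma\alpha^2\in\ker(\rho_\sigma)$ (immediate, since the target $\pi_{2-2\sigma}KO_{C_2}$ is torsion-free), and in (4) the computation $2\eta_\sigma\tau^2\beta^2h=0$ buys you nothing since the class is already known to be $\alpha$-power torsion; the entire content there is that stem $6-\sigma$ has trivial $E_\infty$, which is exactly the paper's argument.
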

\begin{proof}
(1)~~ This holds as $\pi_{2-3\lambda} KO_A = 0$.

(2)~~ Without loss of generality we may suppose that $A$ is of rank $2$. Choose $\mu$ linearly independent from $\lambda$, write $j\colon\ker(\lambda)\rightarrow A$ for the inclusion, and write $\sigma = j^\ast(\lambda)$. The class $\rho_\lambda\rho_{\lambda\mu}\alpha^2$ is in the kernel of $\rho_\mu$, and thus in the image of $j_!$, and the only possibility is that $j_!(\tau^2_\sigma h_\sigma) = \rho_\lambda\rho_{\lambda\mu}\alpha^2$. On the other hand, by comparison with $KU_A$ we may compute $j_!(\tau^2_\sigma h_\sigma) = j_!(1)\cdot \tau^2_\lambda h_\lambda =\rho_{\lambda\mu}\eta_\lambda \cdot \tau^2_\lambda h_\lambda$. It follows that $\eta_\lambda\cdot \tau_\lambda^2 h_\lambda \neq 0$, and the indicated relation is the only possibility.

(3)~~ The class $ \eta_\lambda\tau_\lambda^4 \alpha^2$ restricts to $\alpha^3 = 0$ over $\ker(\lambda)$, and is thus in the image of $\rho_\lambda$. The indicated relation is the only possibility.

(4)~~ This holds as $\pi_{6-\lambda} KO_A = 0$.
\end{proof}

\begin{lemma}
\hphantom{blank}
\begin{enumerate}
\item $\rho_{\lambda\mu}\cdot\tau_{\lambda\mu}^2 k_{\langle\lambda,\mu\rangle} = \rho_{\lambda}\rho_\mu\tau_{\lambda\mu}^4\alpha$;
\item $\rho_\lambda\cdot\tau_\mu^2 k_{\langle\lambda,\mu\rangle} = 0$;
\item $\rho_{\lambda\mu}\cdot\beta^2k_{\langle\lambda,\mu\rangle} =\eta_\lambda\eta_{\mu} \tau_{\langle\lambda,\mu\rangle} \alpha$;
\item $\eta_{\lambda\mu}\cdot\tau_{\lambda\mu}^2k_{\langle\lambda,\mu\rangle} = 0$;
\item $\eta_\lambda\cdot\tau_{\lambda\mu}^2k_{\langle\lambda,\mu\rangle} = \rho_\mu \eta_{\lambda\mu}\tau_{\lambda\mu}^{4}\alpha$;
\item $\eta_\lambda\cdot\beta^2 k_{\langle\lambda,\mu\rangle} = 0$.
\end{enumerate}
\end{lemma}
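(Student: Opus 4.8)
The plan is to reduce immediately to the case $A$ of rank $2$, writing $H = \langle\lambda,\mu\rangle$, since all six identities concern a single rank $2$ subgroup and the joint restriction to rank $2$ subgroups is injective. I would then exploit the two structural facts recorded at the start of \cref{ssec:extensions}: in each stem the $E_\infty$-page of the HFPSS is concentrated in a single filtration, so every homotopy group in play is at most a single $\bbZ/2$ and a class is pinned down once one knows whether it vanishes; and $\theta$ is injective modulo $\alpha$, so any class killed by $\theta$ is divisible by $\alpha$.

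The three vanishing relations (2), (4), (6) I would dispatch on degree grounds, as with the analogous earlier lemmas: each is a product $\rho_\kappa\cdot(\cdots k_H)$ or $\eta_\kappa\cdot(\cdots k_H)$ with $\kappa\in H$, whose $RO(A)$-degree $\xi$ I would compute and then observe that $\pi_\xi KO_A = 0$.

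For (3) and (5) the strategy is to detect the product by restricting along a codimension $1$ subgroup $j\colon\ker(\kappa)\subset A$ with $\kappa\in H$, under which $k_H$ restricts to $\tau_\sigma^2 h_\sigma$ (with $\sigma$ the generating functional of $\ker(\kappa)$), turning the identity into one of the $C_2$-relations of \cref{lem:c2hidden}. For (3) I would take $\kappa = \lambda$: using \cref{lem:eta} to restrict $\eta_\lambda\mapsto\alpha$, one finds that both sides restrict to $\eta_\sigma\tau_\sigma^4\alpha^2$, which is nonzero and equated by \cref{lem:c2hidden}(3). For (5) I would take $\kappa = \lambda\mu$: now \cref{lem:eta} gives $\eta_{\lambda\mu}\mapsto\alpha$, and both sides restrict to $\rho_\sigma\alpha^2$, nonzero and equated by \cref{lem:c2hidden}(2). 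Since the restriction of $0$ is $0$, each side is nonzero; by the single-filtration property it is then the unique nonzero class in its stem, so the two sides agree over $A$.

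The main obstacle is (1). Unlike (3) and (5), the class $\rho_{\lambda\mu}\tau_{\lambda\mu}^2 k_H$ is isotropic — it dies under restriction to each of $\ker(\lambda)$, $\ker(\mu)$, $\ker(\lambda\mu)$ (using $\rho_\sigma h_\sigma = 0$ and $\rho_\sigma^2\alpha = 0$ over $C_2$) — so no restriction can see it, and one checks similarly that it is not reachable from (3), (5), or the relations $\rho_\lambda\rho_\mu\rho_{\lambda\mu} = \beta^{-2}k_H\alpha$ and $\rho_{\lambda\mu}\eta_\lambda\eta_\mu = \tau_\lambda^{-2}\tau_\mu^{-2}k_H\alpha$ by multiplication by a Bott class, since the obvious multiplier $\tau_{\lambda\mu}^2\beta^{-2}$ is not $\psi^{-1}$-fixed and hence not a $KO_A$-class. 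Thus (1) records genuinely new information and must be extracted directly from the rank $2$ HFPSS. Here I would compute the hidden $\alpha$-extension governing the product $\rho_{\lambda\mu}\cdot\tau_{\lambda\mu}^2 k_H$ — both factors being permanent cycles whose filtration $0$ product vanishes because $\rho_{\lambda\mu}k_H = 0$ in $\pi_\star KU_A$ — by means of the connecting map of the cofiber sequence $\Sigma KO_A\xrightarrow{\alpha}KO_A\xrightarrow{\theta}KU_A$ (equivalently, the $d_3$-structure around $\beta^2$ and $\beta^{-2}k_H$ established in \cref{lem:diffs}), and identify its value with $\rho_\lambda\rho_\mu\tau_{\lambda\mu}^4\alpha$. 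Confirming this value is nonzero reduces, via \cref{lem:diffs}, to the nonvanishing of $\rho_\lambda\rho_\mu\alpha$ as a permanent cycle. I expect the delicate points to be precisely this hidden-extension computation and the attendant bookkeeping over which products $\beta^2 k_H$, $\tau_\kappa^2 k_H$, $2k_H$ are genuine $KO_A$-classes.
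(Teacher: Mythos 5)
Your handling of (2), (4), (6) by degree vanishing matches the paper, and your detection arguments for (3) and (5) --- restricting to $\ker(\lambda)$ resp.\ $\ker(\lambda\mu)$ and invoking the $C_2$-relations of \cref{lem:c2hidden} to see that both sides restrict to the same nonzero class in a stem that is a single $\bbZ/2$ --- are correct and essentially equivalent to the paper's (for (3) the paper argues instead that the right-hand side is killed by restriction to $\ker(\lambda\mu)$ and hence $\rho_{\lambda\mu}$-divisible, but both routes work).

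The problem is (1), where your proposal has a genuine gap. You correctly observe that the left-hand side dies under restriction to every index $2$ subgroup, so detection fails; but you then conclude that the relation ``must be extracted directly from the rank $2$ HFPSS'' via a hidden $\alpha$-extension computed ``by means of the connecting map of the cofiber sequence $\Sigma KO_A\xrightarrow{\alpha}KO_A\xrightarrow{\theta}KU_A$.'' That step is only a plan, not an argument: the cofiber sequence tells you that $\rho_{\lambda\mu}\cdot\tau_{\lambda\mu}^2k_{\langle\lambda,\mu\rangle}$ is $\alpha$-divisible (since its complexification $\tau_{\lambda\mu}^2\cdot\rho_{\lambda\mu}k_{\langle\lambda,\mu\rangle}$ vanishes by \ref{r5}), but it does not decide whether the product is zero or the nonzero class $\rho_\lambda\rho_\mu\tau_{\lambda\mu}^4\alpha$, and that dichotomy is exactly the content of (1). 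You never supply the input that resolves it. The missing idea is to run the divisibility argument on the \emph{right}-hand side rather than trying to detect the left: $\rho_\lambda\rho_\mu\tau_{\lambda\mu}^4\alpha$ restricts to $\rho_\sigma^2\alpha=0$ over $\ker(\lambda\mu)$, hence lies in the image of multiplication by $\rho_{\lambda\mu}$; the group one degree up is $\bbZ\{\tau_{\lambda\mu}^2k_{\langle\lambda,\mu\rangle}\}$, so since $\rho_\lambda\rho_\mu\tau_{\lambda\mu}^4\alpha\neq 0$ on $E_\infty$ the only possibility is $\rho_{\lambda\mu}\cdot\tau_{\lambda\mu}^2k_{\langle\lambda,\mu\rangle}=\rho_\lambda\rho_\mu\tau_{\lambda\mu}^4\alpha$. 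This is the same trick you implicitly had available from the earlier lemmas (e.g.\ $\rho_\lambda\rho_\mu\rho_{\lambda\mu}=\beta^{-2}k_{\langle\lambda,\mu\rangle}\alpha$); your survey of why (1) is ``not reachable'' from those relations by Bott multiples led you to overlook that the kernel-of-restriction $=$ image-of-$\rho$ exact sequence applies directly.
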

\begin{proof}
We may suppose without loss of generality that $A$ is of rank $2$.

(1)~~ The class $\rho_\lambda\rho_\mu \tau_{\lambda\mu}^4\alpha$ is in the kernel of restriction to $\ker(\lambda\mu)$, and is therefore divisible by $\rho_{\lambda\mu}$. This is the only possibility.

(2)~~ This holds as $\pi_{1+(2-2\lambda)+(2-2\mu)-\mu-\lambda\mu}KO_A = 0$.

(3)~~ The class $\eta_\lambda\eta_{\mu}\tau_{\langle\lambda,\mu\rangle}\alpha$ is in the kernel of restriction to $\ker(\lambda\mu)$, and is therefore divisible by $\rho_{\lambda\mu}$. This is the only possibility.

(4)~~ This holds as $\pi_{3+(2-2\lambda\mu)-\lambda-\mu} KO_A = 0$.

(5)~~ Let $\sigma$ denote the restriction of $\lambda$ to $\ker(\lambda\mu)$. The listed relation is the only possible lift in its degree of the relation $\eta_\sigma \cdot \tau^2 h_\sigma = \rho_\sigma\alpha^2$ seen in \cref{lem:c2hidden}.

(6)~~ This holds as $\pi_{7-\mu-\lambda\mu}KO_A = 0$.
\end{proof}

This completes our computation of the ring structure of $\pi_\star KO_A$.

\subsection{Transfers}\label{ssec:kotransfer}

It remains only to understand the transfer. Fix a codimension $1$ subgroup $j\colon \ker(\lambda)\subset A$, and consider $j_!\colon \pi_{j^\ast\star}KO_{\ker(\lambda)}\rightarrow\pi_\star KO_A$.

\begin{lemma}\label{lem:transferko1}
$j_!\colon \pi_0 KO_{\ker(\lambda)}\rightarrow\pi_{1-\lambda}KO_A$ satisfies $j_!(1) = \rho_\lambda\alpha$.
\end{lemma}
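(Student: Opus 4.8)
The plan is to reduce to the cyclic case $A=C_2$ and there read off the answer from the homotopy fixed point spectral sequence. Choose a section $i\colon C_2\hookrightarrow A$ of $\lambda$, so that $i^\ast\lambda=\sigma$ and $A\cong\ker(\lambda)\times i(C_2)$. Under the external product decomposition $KO_A\simeq KO_{\ker(\lambda)}\boxtimes KO_{C_2}$ the inclusion $j$ becomes $\mathrm{id}\times j'$ with $j'\colon e\hookrightarrow C_2$, and since the transfer is monoidal for external products we get $j_!(1)=1\boxtimes j'_!(1)$. As $1\boxtimes(\rho_\sigma\alpha)$ is the pullback of $\rho_\sigma\alpha$ along the projection $A\to C_2$, which is $\lambda$, and restriction sends $\rho_\sigma\mapsto\rho_\lambda$ and $\alpha\mapsto\alpha$, it suffices to prove the statement for $A=C_2$: namely $j'_!(1)=\rho_\sigma\alpha$ in $\pi_{1-\sigma}KO_{C_2}$.

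For $A=C_2$ I would compute the two relevant groups from the HFPSS of \cref{ssec:hfpss}. In total degree $1-\sigma$ the filtration-$0$ and filtration-$2$ lines vanish because $\pi_{1-\sigma}KU_{C_2}=\pi_{3-\sigma}KU_{C_2}=0$ for parity reasons, every class in filtration $\geq 3$ carries a factor of $\alpha^3=0$ (using $d_3(\beta^2)=\alpha^3$ from \cref{lem:diffs}), and the filtration-$1$ line is $H^1(C_2;\pi_{2-\sigma}KU_{C_2})=H^1(C_2;\bbZ\{\beta\rho_\sigma\})=\bbZ/2$, generated by $\rho_\sigma\alpha$; thus $\pi_{1-\sigma}KO_{C_2}=\bbZ/2\{\rho_\sigma\alpha\}$. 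The same bookkeeping shows $\pi_{1-2\sigma}KO_{C_2}=0$, the only candidate contribution $H^1(C_2;\pi_{2-2\sigma}KU_{C_2})$ being zero. Now the fundamental cofiber sequence $C_{2+}\to S^0\to S^\sigma$ (as in \cref{lem:cyclic}) identifies $\im(j'_!\colon\pi_0 KO\to\pi_{1-\sigma}KO_{C_2})$ with $\ker(\rho_\sigma\colon\pi_{1-\sigma}KO_{C_2}\to\pi_{1-2\sigma}KO_{C_2})$, which is all of $\pi_{1-\sigma}KO_{C_2}$ since the target vanishes. Hence $j'_!$ is surjective onto $\bbZ/2$, and as its source is generated by $1$ we conclude $j'_!(1)=\rho_\sigma\alpha$.

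The step I expect to be the main obstacle is precisely the nonvanishing $j'_!(1)\neq 0$ --- equivalently, pinning the coefficient of $\rho_\sigma\alpha$ to be odd rather than even. The double coset formula is of no help here, since $j'^\ast$ annihilates $\rho_\sigma\alpha$ (as $j'^\ast\rho_\sigma=\rho_1=0$); the clean resolution is the vanishing $\pi_{1-2\sigma}KO_{C_2}=0$ above, which forces $j'_!$ to be surjective. One could instead argue entirely over $A$ without the splitting: commutation with $\theta$ together with $\pi_{1-\lambda}KU_A=0$ (\cref{lem:basis}) forces $j_!(1)\in(\alpha)$, and in fact $j_!(1)\in\alpha\cdot\pi_{-\lambda}KO_A$; but then identifying $\alpha\cdot\rho_\lambda RU(A)$ with $\bbZ/2\{\rho_\lambda\alpha\}$ (via the relation $\rho_\mu^2\alpha=0$ on the $E_2$-page) and ruling out hidden extensions is more delicate than the reduction to $C_2$, so I would prefer the route above.
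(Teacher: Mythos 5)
Your proof is correct and rests on the same mechanism as the paper's: the identification $\im(j_!)=\ker(\rho_\lambda)$ coming from the fundamental cofiber sequence, combined with knowing that $\rho_\lambda\alpha$ is the only possible nonzero value. The paper phrases this as a one-line divisibility argument over general $A$ (``$\rho_\lambda\alpha$ is in the kernel of $\rho_\lambda$, hence in the image of $j_!$, and this is the only possibility''), whereas you add a harmless K\"unneth reduction to $C_2$ and make explicit the HFPSS computations of $\pi_{1-\sigma}KO_{C_2}$ and $\pi_{1-2\sigma}KO_{C_2}$ that the paper leaves implicit; both are fine.
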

\begin{proof}
The class $\rho_\lambda\alpha$ is in the kernel of $\rho_\lambda$, and thus in the image of $j_!$. This is the only possibility.
\end{proof}

\begin{lemma}\label{lem:kolambdanomial}
Fix a nontrivial functional $\lambda\in A^\vee$. Then any generator $x$ of the first form described in \cref{sssec:kobasis} may be written as
\[
x = \rho_{\lambda_1}\cdots\rho_{\lambda_n}\cdot\eta_{\mu_1}\cdots\eta_{\mu_s}\cdot t\cdot\beta^2 k_{H_1}\cdots \beta^2 k_{H_m}\cdot \tau^2_{\kappa_1}k_{H_{m+1}}\cdots\tau^2_{\kappa_t}k_{H_{m+t}},
\]
satisfying one of the following conditions:
\begin{enumerate}
\item $\lambda\notin\langle\lambda_1,\ldots,\lambda_n,\mu_1\ldots,\mu_s\rangle+H_1+\cdots+H_{m+t}$;
\item $\lambda\in\{\lambda_1,\lambda_1\lambda_2,\mu_1,\mu_1\mu_2,\lambda_1\mu_1\}$;
\item $\lambda\in H_1$;
\item $\lambda\in H_{m+1}$ and $\lambda = \kappa_1$;
\item $\lambda\in H_{m+1}$ and $\lambda\neq\kappa_1$.
\end{enumerate}
\end{lemma}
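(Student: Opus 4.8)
The plan is to follow the proof of \cref{lem:lambdanomial} step by step, using the uniqueness of first-form generators (\cref{sssec:kobasis}) together with the complexification $\theta$ to verify each rewriting, and tracking the two flavors $\beta^2 k_H$ and $\tau^2_\kappa k_H$ of $k$-class throughout. The basic mechanism is this: to re-present $x$ by a monomial $y$ of the first form with $\lambda$ in a desired position, it suffices to exhibit such a $y$, lying in the same degree and satisfying $\theta(y)=\theta(x)$; since $\theta$ is injective modulo $\alpha$ and first-form generators are unique in their degree, this identifies $y$ with $x$.

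Fix then a generator $x$ of the first form with $n,s,n+s\leq 2$ and suppose none of (1)--(5) holds. Since (1) fails we may write $\lambda=\ell\cdot h$ with $\ell\in\langle\lambda_1,\ldots,\lambda_n,\mu_1,\ldots,\mu_s\rangle$ and $h\in H_1+\cdots+H_{m+t}$, the decomposition being unique by condition (5) of \cref{sssec:kobasis}. I split into the cases $h=1$, $\ell=1$, and $\ell,h\neq 1$, as in the complex case. When $h=1$ the functional $\lambda=\ell$ is a product of at most two of the linearly independent functionals $\lambda_i,\mu_j$; after permuting the $\lambda$'s and $\mu$'s the nontrivial possibilities are exactly $\lambda_1,\lambda_1\lambda_2,\mu_1,\mu_1\mu_2,\lambda_1\mu_1$, so we are in case (2), with no rewriting required.

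When $\ell=1$, so $\lambda\in H_1+\cdots+H_{m+t}$, I would first establish a $KO$-analogue of \cref{lem:link} allowing $\lambda$ to be moved into a single summand $H_i$; this follows from \cref{lem:link} applied to the $\theta$-images, each $k$-rearrangement being pinned down by the identification mechanism above. If the summand receiving $\lambda$ carries a factor $\beta^2 k_{H_i}$ we reorder it to $H_1$ and reach case (3); if it carries a factor $\tau^2_\kappa k_{H_i}$ we reorder it to $H_{m+1}$, reaching case (4) when $\lambda=\kappa$ and case (5) when $\lambda\neq\kappa$. The mixed case $\ell,h\neq 1$ is the heart of the argument and mirrors the two mixed subcases of \cref{lem:lambdanomial}, with \ref{r9} and \ref{r11} replaced by the $KO$-trade relations of \cref{ssec:extensions}, such as $\rho_{\lambda\mu}\cdot\beta^2 k_{\langle\lambda,\mu\rangle}=\eta_\lambda\eta_\mu\tau_{\langle\lambda,\mu\rangle}\alpha$ and $\rho_{\lambda\mu}\cdot\tau^2_{\lambda\mu}k_{\langle\lambda,\mu\rangle}=\rho_\lambda\rho_\mu\tau^4_{\lambda\mu}\alpha$ together with their $\eta$-counterparts. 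After using the link lemma to align the relevant part of $h$ with the $k$-factor adjacent to $\ell$, the appropriate such relation either dissolves that factor into linear classes (lowering $h$) or absorbs $\ell$ into it while introducing $\lambda$ (lowering $n+s$), in either case reducing to an already-handled situation.

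The main obstacle is the flavor bookkeeping in the mixed case. Unlike the complex setting, with its single class $k_H$ and single relation \ref{r9}, here each trade relation is sensitive to whether the $k$-factor is of type $\beta^2 k$ or $\tau^2_\kappa k$ and to the choice of $\kappa$, and each carries a factor of $\alpha$ balanced by a compensating Bott class. One must confirm after every reduction that the surviving monomial is still a generator of the first form---rather than one of the $KU_\ast$-summand type of \cref{sssec:kobasis}---and that the terminal form correctly distinguishes cases (4) and (5). I expect the cleanest route is to verify each individual trade relation downstairs via $\theta$ and lift it by the identification mechanism, so that the entire argument reduces to the already-established complex computation plus a careful identification of flavors.
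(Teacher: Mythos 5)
Your proposal is correct and takes essentially the same route as the paper, whose entire proof is the single line ``This follows immediately from \cref{lem:lambdanomial}'': one transports the complex-case rewriting along $\theta$ and invokes uniqueness of first-form generators in their degree, which is exactly your identification mechanism. One small caution: the extension relations of \cref{ssec:extensions} that you cite as ``trade relations'' carry a factor of $\alpha$ and live in torsion degrees, so they are not the ones used here---the rewritings needed are the torsion-free $KO$-lifts of \ref{r9}--\ref{r11}, which is precisely what your fallback of verifying each trade downstairs via $\theta$ and lifting by uniqueness produces.
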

\begin{proof}
This follows immediately from \cref{lem:lambdanomial}.
\end{proof}

\begin{prop}\label{prop:kotransfer}
$j_!\colon \pi_{j^\ast\star} KO_{\ker(\lambda)}\rightarrow \pi_\star KO_A$ is determined by $\pi_\star KO_A$-linearity, comparison with $KU_A$, and \cref{lem:transferko1}.
\end{prop}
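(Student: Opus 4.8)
The plan is to mirror the proof of the analogous statement for $KU_A$ at the end of \cref{ssec:transfer}, using \cref{lem:kolambdanomial} in place of \cref{lem:lambdanomial}. Fix $\xi\in RO(A)$; we must compute $j_!$ on all of $\pi_{j^\ast\xi}KO_{\ker(\lambda)}$. If $\pi_\xi KO_A=0$ there is nothing to prove, so by \cref{sssec:kobasis} we may assume $\pi_\xi KO_A$ is cyclic over the relevant coefficient ring, generated by a monomial $x$ which \cref{lem:kolambdanomial} places in one of its five forms. The source $\pi_{j^\ast\xi}KO_{\ker(\lambda)}$ is then likewise cyclic, and by $\pi_\star KO_A$-linearity we may strip from its generator any subword that is a restriction $j^\ast(y)$ of a class upstairs, namely any factor not interacting with $\lambda$, reducing us to computing $j_!$ on the short $\lambda$-interacting word that remains.

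Two mechanisms pin down the answer, and they correspond to the dichotomy of \cref{sssec:kobasis}. In degrees where $\pi_{\xi+\ast}KO_A$ is of $KU_\ast$-type the complexification $\theta\colon\pi_\xi KO_A\to\pi_\xi KU_A$ is injective, so $j_!$ is forced by the relation $\theta\circ j_!=j_!\circ\theta$ together with the already-established $KU_A$ transfer formulas \ref{t1}--\ref{t4}. In degrees where $\pi_{\xi+\ast}KO_A$ is a free $KO_\ast$-module there is $\alpha$-torsion, and ring-structure item (2) identifies $\ker\theta$ with the ideal $(\alpha)$; here comparison with $KU_A$ determines $j_!$ only modulo $(\alpha)$. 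The residual $\alpha$-component is controlled by \cref{lem:transferko1}: since $j^\ast(\alpha)=\alpha$ by Mackey item (2), the projection formula gives $j_!(\alpha\cdot w)=\alpha\cdot j_!(w)$, so the whole $\theta$-kernel of the source is reached from the single value $j_!(1)=\rho_\lambda\alpha$. This value is itself invisible to $KU_A$: by $\theta\circ j_!=j_!\circ\theta$ its complexification is the $KU_A$-transfer of $1$, which lands in $\pi_{1-\lambda}KU_A=0$ (by \cref{lem:basis}); this is exactly why \cref{lem:transferko1} is a necessary input and not a consequence of comparison.

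Running through \cref{lem:kolambdanomial}: in case (1) the functional $\lambda$ is absent, the source generator is $j^\ast(x)$, and linearity with \cref{lem:transferko1} gives $j_!(j^\ast(x))=x\cdot\rho_\lambda\alpha$. The $\rho$- and $\eta$-cases (2) reduce, after peeling off restrictions, either to a vanishing source or to the transfer of a Bott-type class, whose image is read off from the $KU_A$ computation through $\theta$, with any accompanying multiple of $\alpha$ supplied by \cref{lem:transferko1}. The $k_H$-cases (3)--(5) are handled by comparison with \ref{t4} for the $\theta$-detected part and by \cref{lem:transferko1} for the $\alpha$-component. Each case amounts to a degree count identifying the cyclic source generator, followed by a single application of one of the three permitted inputs.

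The main obstacle is precisely this bookkeeping around $\ker\theta=(\alpha)$. Comparison with $KU_A$ is blind to the entire $\alpha$-torsion part of $\pi_\star KO_A$, so the work lies in checking, across all five cases of \cref{lem:kolambdanomial}, that every $\theta$-killed source class can be written as $j^\ast(y)$ times a power of $\alpha$, and hence that its transfer is determined by \cref{lem:transferko1} and the projection formula. Verifying that no genuinely new, undetermined multiple of $\alpha$ escapes this reduction is the crux of the argument.
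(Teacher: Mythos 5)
Your overall skeleton matches the paper's proof (reduce via \cref{lem:kolambdanomial} and $\pi_\star KO_A$-linearity, handle torsion-free target degrees by injectivity of $\theta$ there), and you correctly identify the crux as the $\alpha$-torsion degrees. But your proposed resolution of that crux has a genuine gap. You claim the undetermined part is exhausted by ``$\theta$-killed source classes,'' each expressible as $j^\ast(y)$ times a power of $\alpha$ and hence handled by $j_!(\alpha w)=\alpha j_!(w)$ together with $j_!(1)=\rho_\lambda\alpha$. This misdiagnoses which cases are problematic: the hard cases are source classes that are \emph{not} $\theta$-killed and \emph{not} $\alpha$-divisible, but whose transfers land in a target degree that is pure $\alpha$-torsion. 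For example, with $\lambda=\lambda_1\cdot\lambda\lambda_1$, the group $\pi_{2-2j^\ast\lambda_1}KO_{\ker(\lambda)}$ is torsion-free, generated by $j^\ast(\tau_{\lambda_1}^2h_{\lambda_1})$, and its transfer is the nonzero torsion class $\rho_{\lambda_1}\rho_{\lambda\lambda_1}\alpha^2$; similarly $j^\ast(\rho_\kappa^2\beta^4)$ and $j^\ast(\tau_\kappa^4\eta_\kappa^2)$ transfer onto $\beta^2k_{\langle\lambda,\kappa\rangle}\alpha^2$ and $\tau_\lambda^2k_{\langle\lambda,\kappa\rangle}\alpha^2$. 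In all of these, comparison with $KU_A$ only yields $\theta(j_!(z))=0$ (e.g.\ $j_!(\tau^2h)_{KU}=\rho_{\lambda_1}\rho_{\lambda\lambda_1}\beta\,h_{\lambda_1}=0$ by \ref{r2}), and the source class cannot be written as an $\alpha$-multiple, so neither of your two mechanisms determines the answer.

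The missing idea, which is how the paper proceeds, is to write the source generator as $j^\ast(y)$ for a suitable $y\in\pi_\star KO_A$ and apply linearity in the form $j_!(j^\ast(y))=j_!(1)\cdot y$, where now $j_!(1)$ lives in the \emph{shifted} degree $\zeta=\xi-|y|$ (which satisfies $j^\ast\zeta=0$). One then checks case by case that $\pi_\zeta KO_A$ is torsion-free, or zero, or is the degree $1-\lambda$ of \cref{lem:transferko1}; in the torsion-free cases $j_!(1)$ is determined by comparison with $KU_A$ via \ref{t2}--\ref{t4} (e.g.\ $j_!(1)=\rho_{\lambda\lambda_1}\eta_{\lambda_1}$ in degree $\lambda_1-\lambda\lambda_1$, or $\tau_\kappa^{-2}k_{\langle\lambda,\kappa\rangle}$ in degree $1+\kappa-\lambda-\lambda\kappa$). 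Finally the product $j_!(1)\cdot y$ is evaluated into the torsion degree using the hidden-extension relations of \cref{ssec:extensions}, such as $\eta_{\lambda_1}\cdot\tau_{\lambda_1}^2h_{\lambda_1}=\rho_{\lambda_1}\alpha^2$ from \cref{lem:c2hidden}. Without this shifting-and-extension step, your argument leaves several transfers genuinely undetermined, so as written the proof does not go through.
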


\begin{proof}
The proof is essentially identical to that of \cref{lem:transfer}. Fix $\xi\in RO(A)$, so that we must compute $j_!\colon \pi_{j^\ast\xi} KO_{\ker(\lambda)}\rightarrow\pi_\xi KO_A$. If $\pi_\xi KO_A$ is torsion-free, then $j_!$ is determined by comparison with $KU_A$. Thus we may suppose that $\pi_\xi KO_A$ is generated by a class of the form $x\alpha^\epsilon$, where $\epsilon\in\{1,2\}$ and $x$ is one of the types given in \cref{lem:kolambdanomial}. By $\pi_\star KO_A$-linearity, we further reduce to considering only the subwords which interact with $\lambda$.

We summarize the case analysis in the following table. The first column gives the form of the generators $x$ which one may reduce to considering, and the second column is a class $y$ such that $j^\ast(y)$ generates $\pi_{j^\ast\xi}KO_{\ker(\lambda)}$. In this case $j_!$ is determined by $j_!(j^\ast(y)) = j_!(1)\cdot y$; the third column gives $j_!(1)$ and the fourth column gives the product. When a particular $\epsilon$ is chosen, the claim is that with the other one would have $\pi_{j^\ast\xi}KO_A = 0$.

\begin{longtable}{l|l|l|l}
\toprule
$x$ & $y$ & $j_!(1)$ & $j_!(1)\cdot y$ \\
\midrule \endhead
\bottomrule \endfoot
$\alpha^\epsilon$ & $\alpha^\epsilon$ & $h_\lambda$ & $\rho_\lambda\eta_\lambda \alpha^\epsilon$ \\

$\rho_\lambda\alpha^\epsilon$ & $\alpha^{\epsilon-1}$ & $\rho_\lambda\alpha$ & $x$ \\

$\rho_{\lambda_1}\rho_{\lambda\lambda_1}\alpha^2$ & $\tau_{\lambda_1}^2 h_{\lambda_1}$ & $\rho_{\lambda\lambda_1}\eta_{\lambda_1}$ & $x$ \\

$\eta_\lambda\alpha$ & $\alpha^2$ & $0$ & $0$ \\

$\eta_{\lambda_1}\eta_{\lambda\lambda_1}\alpha^2$ & $\tau_{\lambda_1}^{-2}\beta^2 h_{\lambda_1}$ & $\rho_{\lambda_1}\eta_{\lambda\lambda_1}$ & $x$ \\

$\rho_{\lambda_1}\eta_{\lambda\lambda_1}\alpha^\epsilon$ & $\alpha^\epsilon$ & $\rho_{\lambda_1}\eta_{\lambda\lambda_1}$ & $x$ \\

$\beta^2 k_{\langle\lambda,\kappa\rangle}\alpha^2$ & $\rho_\kappa^2 \beta^4$ & $\tau_\kappa^{-2}k_{\langle\lambda,\kappa\rangle}$ & $x$ \\

$\tau^2_\lambda k_{\langle\lambda,\kappa\rangle}\alpha^2$ & $\tau_\kappa^4 \eta_\kappa^2$ & $\tau_\lambda^2\tau_\kappa^{-2} k_{\langle\lambda,\kappa\rangle}$ & $x$ \\

$\tau^2_\kappa k_{\langle\lambda,\kappa\rangle}\alpha^2$ & $\tau_\kappa^4\alpha^2$ & $\tau_\kappa^{-2}k_{\langle\lambda,\kappa\rangle}$ & $x$
\end{longtable}
\end{proof}

This concludes our computation of $\pi_\star KO_A$.

\begingroup
\raggedright 
\bibliographystyle{alpha}
\bibliography{refs} 
\endgroup

\end{document}